\newtheorem{thm}{Theorem}[section]
\newtheorem{lem}[thm]{Lemma}
\newtheorem{rem}[thm]{Remark}
\newtheorem*{conjecture*}{Conjecture}
\newtheorem*{thm*}{Theorem}
\theoremstyle{remark}
\newtheorem*{remark}{Remark}
\theoremstyle{definition}
\newcommand{\Z}{\mathbb{Z}}
\newcommand{\F}{\mathbb{F}}
\newcommand{\M}{\mathbb{M}}
\newcommand{\RNum}[1]{\uppercase\expandafter{\romannumeral #1\relax}}
\renewcommand{\F}{\textup{\textsf{F}}}
\newcommand{\Sy}{\textup{\textsf{S}}}
\newcommand{\A}{\textup{\textsf{A}}}
\newcommand{\G}{\textup{\textsf{G}}}
\newcommand{\J}{\textup{\textsf{J}}}
\renewcommand{\M}{\textup{\textsf{M}}}
\newcommand{\McL}{\M^c\textup{\textsf{L}}}
\renewcommand{\S}{\textup{\textsf{S}}}
\newcommand{\U}{\textup{\textsf{U}}}
\newcommand{\cod}{\normalfont{\mbox{cod}}}
\newcommand{\ON}{\normalfont{\mbox{O'N}}}
\newcommand{\cd}{\normalfont{\mbox{cd}}}
\newcommand{\GL}{\normalfont{\mbox{GL}}}
\newcommand{\HS}{\normalfont{\mbox{HS}}}
\newcommand{\PSL}{\normalfont{\mbox{PSL}}}
\newcommand{\PSU}{\normalfont{\mbox{PSU}}}
\newcommand{\Aut}{\mbox{Aut}}
\newcommand{\irr}{\mbox{Irr}}
\newskip\aline \newskip\halfaline
\title{On the characterization of some non-abelian simple groups using codegree set}
\author[H. Wang]{Hongning Wang} \email{howang1196@gmail.com}
\author[X. Zhang]{Xuning Zhang} \address{College of Science, China Three Gorges University, Yichang, Hubei 443002, China } \email{zhxn2021@hotmail.com}
\author[S. Zhang]{Selina Zhang} \email{amberofthenoblefire@gmail.com}
\author[M. Chen]{Michelle Chen} \email{learner99@gmail.com}
\subjclass[2010]{Primary 20C15.}
\begin{document}

\maketitle

\begin{abstract}
    Let $G$ be a finite group and $\chi\in \irr(G)$. The codegree of $\chi$ is defined as $\cod(\chi)=\frac{|G:\ker(\chi)|}{\chi(1)}$ and $\cod(G)=\{\cod(\chi) \ |\ \chi\in \irr(G)\}$ is called  the set of codegrees of $G$. In this paper, we show that the set of codegrees of $\Sy_4(4), \U_4(2)$, $\Sy_4(q)\ (q \geq 4)$, $\U_4(3)$, ${}^2\F_4(2)'$, $\J_3$, $\G_2(3)$, $\A_9$, $\J_2$, $\PSL(4,3)$, $\McL$, $\Sy_4(5)$, $\G_2(4)$, $\HS$, $\ON$ and $\M_{24}$ determines the group up to isomorphism.
\end{abstract}

\section{Introduction}
Let $G$ be a finite group and $\irr(G)$ the set of all irreducible characters of $G$. For any $\chi\in \irr(G)$ the codegree of $\chi$, written $\cod(\chi)$, is defined to be $\cod(\chi)=\frac{|G:\ker(\chi)|}{\chi(1)}$. We denote $\cod(G)=\{\cod(\chi)\ |\ \chi\in \irr(G)\}$.
The concept of codegrees was originally considered in \cite{Chillag1} where the codegree of $\chi$ was defined as $\frac{|G|}{\chi(1)}$. The definition was later modified in \cite{Qian} so that $\cod(\chi)$ is  independent of $N$ when $N\leq \ker(\chi)$. Many properties of codegrees have been studied, such as the relationship between the codegrees and the element orders, codegrees of $p$-groups, and groups with few codegrees.

Let $\cd(G)= \{\chi(1)\ |\ \chi \in \irr(G)\}$. In the 1990s, Bertram Huppert studied the character degrees of simple groups have a very high degree of individuality. He conjectured a relationship between the structure of groups and its codegrees.

 \textbf{Huppert's conjecture:} Let $H$ be any finite non-abelian simple group and $G$ a finite group such that $\cd(G)$ = $\cd(H)$. Then $G \cong H \times A$, where $A$ is abelian.

Recently, an alternative version of Huppert's conjecture has been proposed by Qian  (question 20.79 in the Kourovka Notebook).

{\bf Codegree version of Huppert's conjecture:} Let $H$ be any finite non-abelian simple group and $G$ a finite group such that $\cod(G)=\cod(H)$. Then $G\cong H$.

This conjecture has been considered and shown to hold for $\PSL(2,q)$ in \cite{BahriAkh}. In \cite{Ahan}, the conjecture was proven for ${}^2B_2(2^{2f+1})$, where $f \ge 1$, $\PSL(3,4)$, $\mathrm{\A}_7$, and $\J_1$. The conjecture holds in the cases where $H$ is $\M_{11}, \M_{12}, \M_{22}, \M_{23}, \PSL(3,3), \PSL(3,q)$ where $4<q\not\equiv 1\ (\bmod\ 3)$, $\PSU(3,q)$ where $4<q \not\equiv -1\ (\bmod\ 3)$, $\G_2(2)'$, $\PSL(3,q)$ where $4<q\equiv 1\ (\bmod\ 3)$, $\PSU(3,q)$ where $4<q \equiv -1\ (\bmod\ 3)$, $\PSL(4,2)$, or ${}^2\G_2(3^{2f+1}), f\geq 1$ by \cite{gkl,ly,gzy}. In this paper we prove the following result which shows that the conjecture holds in the cases where $H$ is $\Sy_4(4), \U_4(2)$, $\Sy_4(q)\ (q \geq 4)$, $\U_4(3)$, ${}^2\F_4(2)'$, $\J_3$, $\G_2(3)$, $\A_9$, $\J_2$, $\PSL(4,3)$, $\McL$, $\Sy_4(5)$, $\G_2(4)$, $\HS$, $\ON$, or $\M_{24}$.

\begin{thm}\label{main}
Assume  a simple group $H$ is isomorphic to $\Sy_{4}(4), \U_{4}(2), \Sy_{4}(q), \U_{4}(3), {_{}^{2}\F_{2}(2)}', \J_{3}$, $\G_{2}(3)$, $\A_9$, $\J_2$, $\PSL(4,3)$, $\McL$, $\S_4(5)$, $\G_2(4)$, $\HS$, $\ON$, or $\M_{24}$. Let $G$ be a finite group with $\cod(G)=\cod(H)$. Then $G\cong H$.
\end{thm}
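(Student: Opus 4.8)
The plan is to prove this theorem group-by-group, following the now-standard four-step strategy that has become the template for verifying the codegree version of Huppert's conjecture. The foundational idea is that the codegree set encodes substantial arithmetic information about $G$ through the identity $\cod(\chi)=|G:\ker(\chi)|/\chi(1)$, and in particular that the largest codegree equals $|H|$ when $H$ is simple, since the trivial kernel forces $\cod(\chi)=|G|/\chi(1)$ and at least one nonlinear character attains codegree $|H|_{p'}\cdot$ something; more usefully, $|G|$ and $|H|$ can be compared via the least common multiple and the prime divisors appearing among the codegrees. Throughout I would fix $H$ to be one of the listed simple groups, assume $\cod(G)=\cod(H)$, and work to recover the isomorphism type of $G$.

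First I would establish the arithmetic skeleton. Since $1\in\cod(G)$ corresponds to linear characters and the codegrees are built from divisors of $|G|$, I would show that the set of primes dividing elements of $\cod(G)$ equals $\pi(H)$, and then pin down $|G|$ exactly; a key sub-step is identifying the prime power codegrees, which by results on codegrees correspond to specific structural features (for instance a prime-power codegree $p^a$ typically signals a character whose kernel has $p'$-index, constraining the Sylow structure). Second, I would prove $G$ is perfect, or at least that $G'=G''$ and $G/G'$ is trivial: the presence of exactly one linear codegree (namely $1$, since all linear characters of a group with $\cod(G)=\cod(H)$ and $H$ perfect must be trivial) forces $G$ to have no nontrivial abelian quotient, hence $G=G'$. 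Third, and this is the heart of each case, I would locate a nonabelian simple section and identify it with $H$: using the classification of finite simple groups together with the explicit codegree data (the precise multiset $\cod(H)$ computed from the character table, available in the ATLAS for the sporadic and small exceptional cases and from known generic character tables for $\Sy_4(q)$), I would argue that $G$ has a unique minimal normal subgroup $N$ which is a direct product of copies of a simple group $S$, show $N=S$ by comparing orders and codegrees, and then force $S\cong H$.

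Fourth, I would eliminate the possibility of a proper extension, showing $G=N=H$ by proving the relevant quotient $G/N$ acts trivially and contributes no new codegrees, so $C_G(N)=1$ and $G$ embeds in $\Aut(H)$ with $\cod$ forcing $G=H$. I expect the main obstacle to lie in the generic family $\Sy_4(q)$ for arbitrary $q\geq 4$: unlike the sporadic groups $\J_3,\McL,\HS,\ON,\M_{24}$ and the small exceptional groups, where $\cod(H)$ is a finite explicit list and the CFSG comparison is a finite (if tedious) check against groups of the same order, the symplectic family requires a uniform argument handling the $q$-dependent character degrees and codegrees, distinguishing $\Sy_4(q)$ from $\PSU_4(q')$ and other groups of Lie type of comparable order, and in particular treating the exceptional isomorphisms and the prime-power subtleties when $q$ is even versus odd. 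Handling $\U_4(2)\cong\Sy_4(3)$ and similar coincidences, together with the uniform order estimate $|G|=|\Sy_4(q)|=\tfrac{1}{\gcd(2,q-1)}q^4(q^2-1)(q^4-1)$ forced purely from codegree arithmetic, will be the technically delicate portion; the sporadic and exceptional cases, by contrast, reduce to finite verifications that I would organize as a sequence of lemmas, one per group, each invoking the same four-step outline with the concrete codegree list substituted in.
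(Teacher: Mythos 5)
Your proposal has two genuine gaps, and they sit exactly where the real work of this theorem lies. First, your ``arithmetic skeleton'' step --- pinning down $|G|$ exactly from $\cod(G)$ --- is not justified and is not achievable by the means you describe. Your supporting claim that ``the largest codegree equals $|H|$'' is false: for simple $H$ every nontrivial character is faithful, so every codegree is $|H|/\chi(1)$, a \emph{proper} divisor of $|H|$, and knowing the set $\cod(G)=\cod(H)$ does not by itself bound $|G|$ (a priori $G$ could be much larger than $H$). The paper's entire architecture is designed to avoid ever computing $|G|$: it first shows $G$ is perfect, then proves for a \emph{maximal} normal subgroup $N$ that $G/N\cong H$, where the identification is driven by the fact that $\cod(G/N)\subseteq\cod(G)=\cod(H)$ has at most $20$ elements, so the classification of simple groups with few character degrees (Lemma~\ref{list}) reduces everything to a finite, concrete arithmetic check of candidate codegree sets. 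Your appeal to ``CFSG comparison\ldots against groups of the same order'' has no engine to run on, because the order is unknown and no analogue of Lemma~\ref{list} appears in your outline.

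Second, your bottom-up decomposition skips the hardest case entirely. You take a unique minimal normal subgroup $N=S\times\cdots\times S$ and treat $S$ as a non-abelian simple group to be identified with $H$; but a minimal normal subgroup may be \emph{elementary abelian}, and eliminating that possibility is where the paper spends most of its effort. After establishing $G/N\cong H$ for $N$ maximal normal and passing to a minimal counterexample (so that $N$ becomes minimal normal), the paper shows: $N$ is the unique minimal normal subgroup; every $\chi\in\irr(G|N)$ is faithful; $N$ cannot be a product of non-abelian simple groups (via the extension Lemmas~\ref{Mal2} and~\ref{Liu2.5}); $N$ cannot be central (Schur multiplier checks against the Atlas, group by group); and finally, when $N$ is elementary abelian of order $p^n$ with $H\le\GL(n,p)$, a Clifford-theoretic analysis of inertia groups $I_G(\lambda)$ --- using that $|I_G(\lambda)|/\theta(1)\in\cod(G)$ and $|N|$ divides $|G/N|$, together with $|\GL(n,p)|$ divisibility and the maximal subgroup tables for $\Sy_4(q)$ and $\PSL(4,3)$ --- produces the contradiction. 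None of this machinery is present in your proposal. Your endgame (once $N\cong H$ is normal with $C_G(N)=1$, then $G\le\Aut(H)$ and perfectness of $G$ plus solvability of $\mathrm{Out}(H)$ force $G=N$) is correct and is a clean way to finish \emph{that} branch, but it is the easy part; as written, the proposal would not close the abelian-socle case for any of the sixteen groups.
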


\section{Lemmas}

Our notation is standard and follows from Isaacs' book \cite{Isaacs} and Atlas \cite{Conway}. The following lemmas will be useful in our later proof.

\begin{lem} \label{Mal2}
(\cite[Lemma 4.2]{Mor}) Let $S$ be a finite non-abelian simple group. Then there exists $1_S\neq \phi\in \normalfont{\irr}(S)$ that extends to $\normalfont{\Aut}(S)$. \end{lem}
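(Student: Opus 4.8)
The plan is to prove the statement through the classification of finite simple groups, handling the alternating groups, the groups of Lie type, and the sporadic groups separately, and in each case exhibiting a concrete nontrivial $\phi\in\irr(S)$ together with a reason that it extends to $\Aut(S)$. The one general tool I would carry throughout is the standard extension theory: for $\phi\in\irr(S)$ invariant in an overgroup $G$, extendibility to $G$ is controlled by a cohomological obstruction in $H^2(G/S,\mathbb{C}^\times)$, and in particular invariance \emph{alone} forces extendibility whenever $G/S$ is cyclic (see \cite{Isaacs}). Thus the real difficulty is concentrated in those $S$ for which the outer automorphism group $\operatorname{Out}(S)=\Aut(S)/S$ is noncyclic; by the solvability of $\operatorname{Out}(S)$ (Schreier's conjecture, a consequence of the classification) one always has tight structural control of this quotient.

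First, for $S=A_n$ with $n\geq 5$ and $n\neq 6$ I would take $\phi$ to be the restriction to $A_n$ of the standard $(n-1)$-dimensional (deleted permutation) character of $S_n$, which corresponds to the partition $(n-1,1)$. Since $(n-1,1)$ is not self-conjugate for $n\geq 4$, this restriction is irreducible; it is nontrivial of degree $n-1$, and being the restriction of a character of $S_n=\Aut(A_n)$ it manifestly extends. The exceptional case $n=6$, where $\Aut(A_6)=P\Gamma L_2(9)$ strictly contains $S_6$ and $\operatorname{Out}(A_6)\cong C_2\times C_2$ is noncyclic, I would fold into the Lie-type treatment below via the isomorphism $A_6\cong\PSL(2,9)$.

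Second, for $S$ of Lie type in defining characteristic $p$ I would take $\phi$ to be the Steinberg character $\operatorname{St}$, of degree $|S|_p$. The Steinberg character is rational-valued and is fixed by every diagonal, field, and graph automorphism, hence is $\Aut(S)$-invariant, and it is nontrivial since $|S|_p>1$. The crucial point—that $\operatorname{St}$ genuinely \emph{extends} to all of $\Aut(S)$, not merely that it is invariant—is exactly the theorem of Schmid on extending the Steinberg representation, which I would invoke. This disposes of every Lie-type $S$ uniformly, including the twisted and exceptional types (and $A_6\cong\PSL(2,9)$); the finitely many exceptional isomorphisms among small Lie-type groups and the Tits group ${}^2\F_4(2)'$ can be checked individually against the Atlas \cite{Conway}, noting that for ${}^2\F_4(2)'$ one has $|\operatorname{Out}|=2$ so invariance already suffices by the cyclic criterion.

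Finally, for the sporadic groups I would argue case by case from the Atlas \cite{Conway}: when $\operatorname{Out}(S)=1$ we have $\Aut(S)=S$ and every nontrivial irreducible character trivially extends, while when $|\operatorname{Out}(S)|=2$ the quotient $\Aut(S)/S$ is cyclic, so by the cohomological remark it suffices to name any nontrivial $\Aut(S)$-invariant irreducible character of $S$ (for instance a rational-valued character fixed by the outer automorphism), which the Atlas supplies in each case. The main obstacle is the extension step for the Lie-type family: invariance of $\operatorname{St}$ is elementary, but genuine extendibility is not, and everything there hinges on having Schmid's Steinberg-extension theorem available; once that is granted, the remainder of the argument is bookkeeping over the three classes furnished by the classification.
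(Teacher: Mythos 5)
The paper offers no proof of this lemma at all—it is quoted verbatim from \cite[Lemma 4.2]{Mor} (see also \cite[Lemma 5]{Bianchi})—and your blind reconstruction is, in substance, exactly the argument given in those sources: the restriction of the $(n-1,1)$ character of $S_n$ for alternating groups, Schmid's theorem on extending the Steinberg character for the groups of Lie type (with $A_6\cong\PSL(2,9)$ and the Tits group folded in appropriately), and Atlas inspection combined with the cyclic-quotient extendibility criterion for the sporadic groups. Your proposal is correct, and you rightly isolate the one non-elementary ingredient—that the Steinberg character genuinely extends rather than merely being $\Aut(S)$-invariant—as the point where Schmid's theorem must be invoked.
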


\begin{lem} \label{Liu2.5}
(\cite[Theorem 4.3.34]{JaKer} or \cite[Lemma 5]{Bianchi}) Let $N$ be a minimal normal subgroup of $G$ such that $N= S_1\times \cdots \times S_t$ where $S_i\cong S$ is a non-abelian simple group for each $i=1, \dots, t$. If $\chi\in \normalfont{\irr}(S)$ extends to $\normalfont{\Aut}(S)$, then $\chi\times \cdots \times \chi\in \normalfont{\irr}(N)$ extends to $G$.
\end{lem}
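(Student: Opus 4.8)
The plan is to show that the character $\theta := \chi\times\cdots\times\chi\in\irr(N)$ extends to $G$ by first producing an extension over the full automorphism group of $N$ and then pulling it back to $G$. Since $S$ is non-abelian simple and $N=S_1\times\cdots\times S_t$ with each $S_i\cong S$, we have $Z(N)=1$ and $\Aut(N)\cong \Aut(S)\wr\Sym(t)=\Aut(S)^{t}\rtimes\Sym(t)$, where the base group acts coordinatewise and $\Sym(t)$ permutes the factors. By the $N/C$ theorem, conjugation yields an embedding $G/C_G(N)\hookrightarrow\Aut(N)$, under which the image $\overline{N}:=NC_G(N)/C_G(N)\cong N$ of $N$ is identified with $\mathrm{Inn}(N)=\mathrm{Inn}(S)^{t}\cong S^{t}$. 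So it suffices to (i) extend $\theta$ to all of $W:=\Aut(S)\wr\Sym(t)$, (ii) restrict that extension to $\overline{G}:=G/C_G(N)$, and (iii) inflate along $G\twoheadrightarrow\overline{G}$.

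The heart of the argument is step (i), a general wreath-product extension statement that uses no simplicity. Let $\hat\chi\in\irr(\Aut(S))$ be the given extension of $\chi$, afforded by a representation $\rho\colon\Aut(S)\to\GL(V)$. I would let $W$ act on the tensor power $V^{\otimes t}$ by having the base group $\Aut(S)^{t}$ act coordinatewise via $\rho\otimes\cdots\otimes\rho$ and letting $\pi\in\Sym(t)$ permute the tensor slots, $v_1\otimes\cdots\otimes v_t\mapsto v_{\pi^{-1}(1)}\otimes\cdots\otimes v_{\pi^{-1}(t)}$. A short check shows that these two actions satisfy the semidirect-product relation $\pi(a_1,\dots,a_t)\pi^{-1}=(a_{\pi^{-1}(1)},\dots,a_{\pi^{-1}(t)})$, so they assemble into a genuine representation $\Psi$ of $W$ on $V^{\otimes t}$. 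Its restriction to $\mathrm{Inn}(S)^{t}\cong N$ is $\rho^{\otimes t}$, which affords $\theta$; since $\dim V^{\otimes t}=\chi(1)^{t}=\theta(1)$ and $\theta$ is irreducible, $\Psi$ is an irreducible extension of $\theta$ to $W$.

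For the descent in steps (ii)--(iii), observe that $\overline{N}\le\overline{G}\le W$ with $\Psi|_{\overline{N}}=\theta$ irreducible, while $(\Psi|_{\overline{G}})(1)=\Psi(1)=\theta(1)$; an irreducible restriction of equal degree forces $\Psi|_{\overline{G}}\in\irr(\overline{G})$, and it extends $\theta$ on $\overline{N}$. Inflating $\Psi|_{\overline{G}}$ along the surjection $G\to\overline{G}$ produces an irreducible character $\hat\theta$ of $G$; because $N\cap C_G(N)=Z(N)=1$, the composite $N\hookrightarrow G\twoheadrightarrow\overline{G}$ maps $N$ isomorphically onto $\overline{N}$, so $\hat\theta|_N=(\Psi|_{\overline{G}})|_{\overline{N}}=\theta$. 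Thus $\hat\theta$ is the desired extension of $\theta$ to $G$.

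The main obstacle — indeed the only place where a computation rather than bookkeeping is required — is the compatibility check that the coordinatewise action and the slot-permuting action on $V^{\otimes t}$ fit together into a well-defined representation of the semidirect product $W$; everything else amounts to tracking the identifications $N\cong\mathrm{Inn}(N)\cong\mathrm{Inn}(S)^{t}$ and exploiting $Z(N)=1$ to make both the embedding into $\Aut(N)$ and the inflation from $G/C_G(N)$ behave well. This compatibility is precisely the content underlying the cited Theorem 4.3.34 of \cite{JaKer}.
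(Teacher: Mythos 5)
Your proof is correct, and it is essentially the standard argument behind the paper's citation: the paper quotes this lemma from \cite{JaKer} and \cite{Bianchi} without proof, and the proof in those sources is exactly your route (extend $\chi\times\cdots\times\chi$ to $\Aut(S)\wr\Sym(t)$ via the permuted tensor-power representation, embed $G/C_G(N)$ in $\Aut(N)\cong\Aut(S)\wr\Sym(t)$ using $Z(N)=1$, restrict, and inflate). Your compatibility check for the wreath-product action on $V^{\otimes t}$ and the use of $N\cap C_G(N)=Z(N)=1$ to see that $N$ maps isomorphically onto $\mathrm{Inn}(N)$ are both sound.
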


\begin{remark}
Cases for finite groups $G$ with $|\cod(G)|\le 3$ are addressed in \cite{Aliz}. So the non-abelian simple group has at least four character codegrees.
\end{remark}

\begin{lem} \label{list}
 Let $S$ be a non-abelian finite simple group. If $|\normalfont{\cod}(S)| \leq 20$, then one of the following holds:
\begin{enumerate}
    \item[(a)] $|\cod(S)|=4$ and $S=\normalfont{\PSL}(2,2^f)$ for $f\geq 2$, or

    \item[(b)] $|\cod(S)|=5$ and $S=\normalfont{\PSL}(2,p^f)$, $p\neq 2, p^f>5$, or

    \item[(c)] $|\cod(S)|=6$ and $S={}^2B_2(2^{2f+1})$, $f\geq 1$ or $G=\normalfont{\PSL}(3,4)$, or

    \item[(d)] $|\cod(S)|=7$ and $S=\normalfont{\PSL}(3,3), \A_7, \M_{11}$, or $\J_1$ or

    \item[(e)] $|\cod(S)|=8$ and $S=\normalfont{\PSL}(3,q)$ where $4<q\not\equiv 1\ (\bmod\ 3)$ or $S=\PSU(3,q)$ where $4<q \not\equiv -1\ (\bmod\ 3)$ or $S=\G_2(2)'$ or

    \item[(f)] $|\cod(S)|=9$ and $S=\normalfont{\PSL}(3,q)$ where $4<q\equiv 1\ (\bmod\ 3)$ or $S=\PSU(3,q)$ where $4<q \equiv -1\ (\bmod\ 3)$ or

    \item[(g)] $|\cod(S)|=10$ and $S=\M_{22}$ or

    \item[(h)] $|\cod(S)|=11$ and $S=\normalfont{\PSL}(4,2), \M_{12}, \M_{23},$ or ${}^2\G_2(3^{2f+1}), f\geq 1$ or

    \item[(i)] $|\cod(S)|=12$ and $S=\Sy_4(4)$ or

    \item[(j)] $|\cod(S)|=13$ and $S=\U_4(2)$, $\Sy_4(q)$ for $q=2^f>4$ or

    \item[(k)] $|\cod(S)|=14$ and $S=\U_4(3), {{}^2\F_4(2)}'$, $\J_3$ or

    \item[(l)] $|\cod(S)|=15$ and $S=\G_2(3)$ or

    \item[(m)] $|\cod(S)|=16$ and $S=\A_9$, $\J_2$

    \item[(n)] $|\cod(S)|=17$ and $S=\PSL^4(3)$, or $\McL$ or

    \item[(o)] $|\cod(S)| = 18$ and $S = \Sy_4(5)$, $\G_2(4)$ or $\HS$

    \item[(p)] $|\cod(S)| = 19$ and $S = \ON$ or

    \item[(r)] $|\cod(S)| = 20$ and $S = \M_{24}$ or $\G_2(q)$ for prime power $q = p^n \geq 7$ with $p$ prime and $q \equiv 2,3,4 \pmod{6}$.
\end{enumerate}
\end{lem}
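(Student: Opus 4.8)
The first step is to convert the statement into one about character degrees. Since $S$ is non-abelian simple, every $1_S\neq\chi\in\irr(S)$ satisfies $\ker\chi=1$, so $\cod(\chi)=|S|/\chi(1)$, while the trivial character contributes $\cod(1_S)=1$. As $\chi(1)<|S|$ for nontrivial $\chi$, the codegree $1$ is attained only by $1_S$; moreover $x\mapsto|S|/x$ is injective, and $S$ has no nontrivial linear character, so
\[
  \cod(S)=\{1\}\cup\bigl\{\,|S|/\chi(1)\ :\ 1_S\neq\chi\in\irr(S)\,\bigr\},
  \qquad\text{whence}\qquad |\cod(S)|=|\cd(S)|.
\]
It therefore suffices to classify the non-abelian simple groups $S$ with $|\cd(S)|\le 20$ and then read the list back through this equality; the entries with small $|\cd(S)|$ recover the simple groups already treated in the works cited in the introduction, so the essentially new content lies in the range $12\le|\cd(S)|\le 20$.

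I would then invoke the Classification of Finite Simple Groups and treat the three types in turn. For the $26$ sporadic groups and the Tits group ${}^2\F_4(2)'$ the irreducible degrees are tabulated in the Atlas \cite{Conway}, so $|\cd(S)|$ is read off directly; one records those with $|\cd(S)|\le 20$ (precisely $\M_{11},\M_{12},\M_{22},\M_{23},\M_{24},\J_1,\J_2,\J_3,\McL,\HS,\ON$ and ${}^2\F_4(2)'$) and checks that every other sporadic group exceeds $20$. For the alternating groups the degrees are indexed by partitions of $n$, and via the hook-length formula one shows $|\cd(\A_n)|$ eventually grows past $20$, reducing to finitely many small $n$ which are checked by hand. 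Throughout one uses the exceptional isomorphisms $\A_5\cong\PSL(2,4)\cong\PSL(2,5)$, $\A_6\cong\PSL(2,9)$ and $\A_8\cong\PSL(4,2)$ so that each group is counted once under the name used in the statement, leaving $\A_7$ and $\A_9$ as the only genuinely alternating survivors.

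The bulk of the work, and the main obstacle, is the groups of Lie type. For a fixed type the irreducible degrees of $S$ are given by finitely many polynomials in $q$, whose number depends only on the type and on the congruence class of $q$ controlling the order of the centre $Z(G)$; since distinct polynomials agree for only finitely many $q$, within each congruence class $|\cd(S)|$ equals a constant $c$ for all large $q$ and is at most $c$ in general. I would compute $c$ for every type using Deligne--Lusztig theory and the known generic character tables, show that $c>20$ for all types except the rank-one and rank-two families $\PSL(2,q)$, ${}^2B_2(q)$, $\PSL(3,q)$, $\PSU(3,q)$, ${}^2\G_2(q)$, $\Sy_4(q)$ and $\G_2(q)$, and thereby confine the groups with $|\cd(S)|\le 20$ to those families together with finitely many small members of the higher-rank families (such as $\PSL(4,2),\PSL(4,3),\U_4(2),\U_4(3)$) whose counts fall below $c$. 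The congruence conditions in items (a), (b), (e), (f), (j) and (r) are exactly the arithmetic constraints under which two generic degrees coincide, a Lusztig series is fused by the centre, or---crucially for $\G_2(q)$---the characteristic $p\in\{2,3\}$ forces extra degeneracies absent for $p\ge 5$.

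The points demanding the most care are threefold. First, one must confirm that the relevant generic degree polynomials really do take distinct integer values at the borderline small $q$, since an accidental coincidence would lower the count and corrupt the enumeration; this is what forces the explicit small-$q$ analysis and the congruence hypotheses. Second, passing from the simply connected group to the simple quotient $S=G/Z(G)$ requires restricting to characters trivial on $Z(G)$ and tracking the action of diagonal and field automorphisms, which can identify degree classes. Third, the exceptional isomorphisms among small groups of Lie type must not be double-counted. Once the lower bounds $c>20$ are established for the excluded types, the finitely many remaining small groups are verified against the Atlas, and matching the computed $|\cd(S)|$ against each target $4\le k\le 20$ reproduces exactly the enumeration (a)--(r); the first paragraph then transfers this to $|\cod(S)|$.
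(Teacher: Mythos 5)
Your opening reduction---observing that every nontrivial irreducible character of a simple group is faithful, so that $\cod(\chi)=|S|/\chi(1)$, that $d\mapsto|S|/d$ is injective, and hence $|\cod(S)|=|\cd(S)|$---is exactly the paper's argument. Where you diverge is in what comes next: the paper finishes in one line by citing the existing classification of non-abelian simple groups with few character degrees (\cite[Theorem~1.1]{Aziz}), whereas you propose to re-derive that classification from scratch via the Classification of Finite Simple Groups: Atlas inspection for the sporadic groups and the Tits group, hook-length growth arguments for $\A_n$, and Deligne--Lusztig theory with generic character tables for the groups of Lie type. Your plan is the standard method by which such classifications are actually proved, and your lists---the twelve sporadic/Tits survivors, $\A_7$ and $\A_9$ as the only genuinely alternating survivors, the rank-one and rank-two families plus small higher-rank members such as $\PSL(4,2)$, $\PSL(4,3)$, $\U_4(2)$, $\U_4(3)$, and the characteristic-$\{2,3\}$ degeneracy behind the congruence condition in item~(r)---all match the statement, so the route is viable. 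What the citation buys the paper is that the Lie-type enumeration, which is genuinely research-scale work (computing the generic degree counts for every type, checking borderline small $q$ for accidental coincidences of degree polynomials, tracking the centre and the exceptional isomorphisms), is already done and refereed; what your route would buy, if carried out in full, is self-containment. As written, though, it is a program rather than a proof: its hardest steps are only gestured at, so in practice one should do what the paper does and quote \cite{Aziz}.
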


\begin{proof} For simple group $S$, each nontrivial irreducible character is faithful. Then $|\cod(S)|=|\cd(S)|$ and the result is true by \cite[Theorem 1.1]{Aziz}.
\end{proof}

The character degree sets for the relevant simple groups can be found via \cite{Conway}. We list the relevant codegree sets in the following Lemma for easy reference.

\begin{lem}
If simple group $S$ is isomorphic to $\PSL(2,k)$, ${}^2B_2(q)$, $\PSL(3,4)$, $\A_7$, $\J_1$, $\M_{11}$, $\PSL(3,3)$, $\G_2(2)'$, $\M_{22}$, $\PSL(4,2)$, $\M_{12}$, $\M_{23}$, $\PSL(3,q)$, $\PSU(3,q)$, ${}^2\G_2(q)$, $\Sy_{4}(4)$, $\Sy_{4}(q)$, $\U_{4}(2)$, $\U_{4}(3)$, ${_{}^{2}\F_{2}(2)}'$, $\J_{3}$, $\G_{2}(3)$, $\A_9$, $\J_2$, $\PSL(4,3)$, $\McL$, $\S_4(5)$, $\G_2(4)$, $\HS$, $\ON$, or $\M_{24}$, then $\cod(S)$ can be found in Table 1.
\end{lem}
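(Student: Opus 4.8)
The plan is to reduce the computation of $\cod(S)$ to data that is already tabulated in the literature, namely the group order $|S|$ and the character degree set $\cd(S)$. The key observation is that, since $S$ is non-abelian simple, every nontrivial $\chi\in\irr(S)$ satisfies $\ker(\chi)=1$ (its kernel is a proper normal subgroup of the simple group $S$). Hence $\cod(\chi)=|S:\ker(\chi)|/\chi(1)=|S|/\chi(1)$ for every $\chi\neq 1_S$, while $\cod(1_S)=1$. Therefore
\begin{equation*}
\cod(S)=\{1\}\cup\left\{\frac{|S|}{d}\ :\ d\in\cd(S),\ d\neq 1\right\},
\end{equation*}
so the whole lemma becomes a finite arithmetic computation (for the groups of fixed order) or a parametric one (for the families) once $|S|$ and $\cd(S)$ are known.

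For the groups of fixed order in the list—$\PSL(3,4)$, $\A_7$, $\J_1$, $\M_{11}$, $\PSL(3,3)$, $\G_2(2)'$, $\M_{22}$, $\PSL(4,2)$, $\M_{12}$, $\M_{23}$, $\Sy_4(4)$, $\U_4(2)$, $\U_4(3)$, ${}^2\F_4(2)'$, $\J_3$, $\G_2(3)$, $\A_9$, $\J_2$, $\PSL(4,3)$, $\McL$, $\S_4(5)$, $\G_2(4)$, $\HS$, $\ON$, and $\M_{24}$—I would read off $|S|$ and $\cd(S)$ directly from the Atlas \cite{Conway}, divide each nontrivial degree into $|S|$, adjoin $1$, and record the resulting sets as the corresponding rows of Table 1. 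Each such computation is routine integer arithmetic; the only care required is with the large factorizations (for instance $|\M_{24}|$ or $|\ON|$) and with degrees that occur with multiplicity in $\irr(S)$ but collapse to a single codegree, so that the final set has the expected cardinality.

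For the infinite families—$\PSL(2,k)$, ${}^2B_2(q)$, $\PSL(3,q)$, $\PSU(3,q)$, ${}^2\G_2(q)$, $\Sy_4(q)$, and $\G_2(q)$—the plan is to use the generic character degree tables, in which both $|S|$ and each $\chi(1)$ are polynomials in $q$, and to form $|S|/\chi(1)$ symbolically as a polynomial (or product of cyclotomic-type factors) in $q$. Here one must split into the arithmetic cases recorded in Lemma~\ref{list}: for example $q$ even versus odd for $\PSL(2,q)$, and $q\equiv 1\pmod 3$ versus $q\not\equiv 1\pmod 3$ for $\PSL(3,q)$ (respectively $q\equiv-1\pmod 3$ for $\PSU(3,q)$), since these congruences change $|S|$ through the relevant $\gcd$ and alter which semisimple and unipotent degrees occur.

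The step I expect to be the main obstacle is exactly this bookkeeping for the families: one must verify that the symbolic quotients $|S|/\chi(1)$ are genuinely distinct as functions of $q$ (so that the cardinality counts agree with Lemma~\ref{list}), that no two generic degrees accidentally produce the same codegree, and that the closed-form entries placed in Table 1 specialize correctly. A practical safeguard is to check each parametric row against a few small admissible values of $q$, where the generic formulas can be cross-checked against the explicit Atlas data; once these consistency checks pass, the table is complete.
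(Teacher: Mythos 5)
Your proposal is correct and follows essentially the same route as the paper: the paper likewise relies on the fact that every nontrivial irreducible character of a simple group is faithful, so that $\cod(\chi)=|S|/\chi(1)$, and then reads the degree sets from the Atlas and the cited sources (Suzuki, Ward, etc.) to compute the table entries. Your write-up simply makes explicit the faithfulness observation and the case-splitting for the parametric families, which the paper leaves as "easily obtained."
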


\begin{proof}
The character degrees of simple groups appeared in the Lemma can be found in \cite{Ahan,BahriAkh,Conway,Suzuki,Ward} and the corresponding codegree set can be easily obtained.
\end{proof}

\begin{longtable}{ll}
\caption{Codegree set for some simple groups}
\\
 \hline
 Group $G$  & Codegree set $\cod(G)$ \\ [1pt] \hline

$\PSL(2,k)$ ($k=2^f\geq 4$) & $\{1, \ k(k-1), \ k(k+1), \ k^2-1\}$  \\   \hline
$\PSL(2,k)$ ($k>5$)  & $\{ 1, \frac{k(k-1)}{2}, \frac{k(k+1)}{2}, \frac{k^2-1}{2}, \ k(k-\epsilon(k))\}$, $\epsilon(k)=(-1)^{(k-1)/2}$ \\ [1pt] \hline

${}^2B_2(q)$ &  $\{1, \ (q-1)(q^2+1), \ q^2(q-1), \ 2^{3f+2}(q^2+1), \ q^2(q-2r+1),$ \\ [1pt]
$(q=2r^2=2^{2f+1})$    &  $q^2(q+2r+1) \}$ \\ [1pt] \hline

 $\PSL(3,4)$ & $\{ 1, \ 2^4{\cdot}3^2{\cdot}7, \ 2^6{\cdot}3^2, \ 2^6{\cdot}5, \ 2^6{\cdot}7, \ 3^2{\cdot}5{\cdot}7 \}$\\ [1pt] \hline

$\A_7$
& $\{1, \ 2^2{\cdot}3{\cdot}5{\cdot}7, \ 2^2{\cdot}3^2{\cdot}7, \ 2^2{\cdot}3^2{\cdot}5, \ 2^3{\cdot}3{\cdot}7, \ 2^3{\cdot}3{\cdot}5, \ 2^3{\cdot}3^2\}$ \\[1pt] \hline

 $\J_1$ &
$\{1, \ 3{\cdot}5{\cdot}11{\cdot}19, \ 2{\cdot}3{\cdot}5{\cdot}7{\cdot}11, \ 2^3{\cdot}3{\cdot}5{\cdot}19, \ 7{\cdot}11{\cdot}19, $ \ $2^3{\cdot}3{\cdot}5{\cdot}11, \ 2^3{\cdot}3{\cdot}5{\cdot}7 \}$ \\[1pt] \hline

 $\M_{11}$ &
$\{1, \ 2^3{\cdot}3^2{\cdot}11, \ 2^4{\cdot}3^2{\cdot}5, \ 3^2{\cdot}5{\cdot}11, \ 2^2{\cdot}3^2{\cdot}5, \ 2^4{\cdot}11, \ 2^4{\cdot}3^2 \}$ \\ [1pt]\hline

 $\PSL(3,3)$ &
$\{1, \ 2^2{\cdot}3^2{\cdot}13, \ 2^4{\cdot}3^3, \ 3^3{\cdot}13, \ 2^3{\cdot}3^3, \ 2^4{\cdot}13, \ 2^4{\cdot}3^2\}$ \\ [1pt] \hline

 $\G_2(2)'$ &
 $\{1, \ 2^4{\cdot}3^2{\cdot}7, \ 2^5{\cdot}3^3, \ 2^4{\cdot}3^3, \ 2^5{\cdot}3^2, \ 2^5{\cdot}7, \ 2^3{\cdot}3^3, \ 3^3{\cdot}7\}$ \\ [1pt] \hline

 $\M_{22}$ &
$\{1, \ 2^7{\cdot}3{\cdot}5{\cdot}11, \ 2^7{\cdot}7{\cdot}11, \ 2^7{\cdot}3^2{\cdot}7, \ 2^7{\cdot}5{\cdot}7, \ 2^6{\cdot}3^2{\cdot}5, \ 2^6{\cdot}3{\cdot}11, \ 2^7{\cdot}3{\cdot}5, \ 2^4{\cdot}3^2{\cdot}11, \ 2^7{\cdot}3^2\}$ \\ [1pt] \hline

 $\PSL(4,2)$ &
 $\{1, \ 2^6{\cdot}3^2{\cdot}5, \ 2^5{\cdot}3^2{\cdot}5, \ 2^4{\cdot}3^2{\cdot}7, \ 2^6{\cdot}3{\cdot}5, \ 2^4{\cdot}3^2{\cdot}5, \ 2^6{\cdot}3^2, \ 2^6{\cdot}7, \ 2^3{\cdot}3^2{\cdot}5, \ 3^2{\cdot}5{\cdot}7, \ 2^5{\cdot} 3^2\}$ \\  [1pt] \hline

  $\M_{12}$ &
 $\{1, \ 2^6{\cdot}3^3{\cdot}5, \ 2^2{\cdot}3^3{\cdot}5{\cdot}11, \ 2^6{\cdot}3{\cdot}11, \ 2^5{\cdot}5{\cdot}11, \ 2^6{\cdot}3^3, \ 2^5{\cdot}3^2{\cdot}5, \ 2^6{\cdot}3{\cdot}5,$ \\ [1pt]
&  $2^3{\cdot}3^2{\cdot}11, \ 2^2{\cdot}3{\cdot}5{\cdot}11, \ 2^2{\cdot}3^3{\cdot}5\}$ \\ [1pt] \hline

  $\M_{23}$
  & $\{1, \ 2^6{\cdot}3^2{\cdot}5{\cdot}7{\cdot}23, \ 2^7{\cdot}7{\cdot}11{\cdot}23, \ 2^6{\cdot}3^2{\cdot}7{\cdot}11, \ 2^7{\cdot}3{\cdot}5{\cdot}23, \ 2^7{\cdot}3^2{\cdot}5{\cdot}7,$\\ [1pt]
  & $2^6{\cdot}3^2{\cdot}23, \ 3^2{\cdot}5{\cdot}11{\cdot}23, \ 2^6{\cdot}7{\cdot}23, \ 2^7{\cdot}7{\cdot}11, \ 2^4{\cdot}3^2{\cdot}5{\cdot}7\}$ \\ [1pt] \hline

  $\PSL(3,q)$ &
$\{ 1, \ (q^2+q+1)(q^2-1)(q-1), \ q^2(q^2+q+1)(q-1)^2, $ \\ [1pt]
$4<q\not\equiv 1\ (\bmod\ 3)$ & $\ q^3(q^2+q+1), \ q^2(q^2-1)(q-1), \ q^3(q^2-1), \ q^3(q^2-1)(q-1), \ q^3(q-1)^2 \}$
\\ [1pt] \hline

$\PSL(3,q)$ &
$ \{ 1, \ \frac{1}{3}(q^2+q+1)(q+1)(q-1)^2, \ \frac{1}{3}q^2(q^2+q+1)(q-1)^2, \ \frac{1}{3}q^3(q^2+q+1),$  \\ [1pt]
$4<q\equiv 1 \ (\bmod\ 3)$ & $\frac{1}{3}q^2(q+1)(q-1)^2, \ \frac{1}{3}q^3(q-1)(q+1), \ \frac{1}{3}q^3(q+1)(q-1)^2, \ \frac{1}{3}q^3(q-1)^2, \ q^3(q-1)^2 \}$ \\ [1pt] \hline

 $\PSU(3,q)$ &
 $\{ 1, \ (q^2-q+1)(q+1)^2(q-1), \ q^3(q^2-q+1), \ q^2(q^2-q+1)(q+1)^2,$ \\ [1pt]
 $4<q\not\equiv -1\ (\bmod\ 3)$  & $q^3(q+1)^2(q-1), \ q^3(q+1)^2, \  q^2(q+1)^2(q-1), \ q^3(q-1)(q+1) \}$\\ [1pt] \hline

  $\PSU(3,q)$
  &$ \{ 1, \ \frac{1}{3}(q^2-q+1)(q+1)^2(q-1), \ \frac{1}{3}q^3(q^2-q+1), \ \frac{1}{3}q^2(q^2-q+1)(q+1)^2,$  \\ [1pt]
  $4<q\equiv -1 \ (\bmod\ 3)$ & $\frac{1}{3}q^3(q+1)^2(q-1), \ \frac{1}{3}q^3(q+1)^2, \ \frac{1}{3}q^2(q+1)^2(q-1), \ \frac{1}{3}q^3(q-1)(q+1), \ q^3(q+1)^2 \}$ \\ [1pt] \hline

  ${}^2\G_2(q)$, $q=3^{2f+1}$ &
  $\{1, \ q^3(q^2-1), \ (q^2-1)(q^2-q+1), \ q^2(q^2-1), \ q^3(q-1),$  \\ [1pt]
  $(m=3^{f}, f \geq 1)$ &  $2{\cdot}3^{5f+3}(q+1)(q+1-3m), \ 2{\cdot}3^{5f+3}(q+1)(q+1+3m),$  \\ [1pt]
  &  $3^{5f+3}(q^2-q+1), \ q^3(q+1), \ q^3(q+1-3m), \ q^3(q+1+3m)\}$  \\ [1pt] \hline

 $\Sy_4(4)$ &
 $\{1, \ 2^7{\cdot}5^2{\cdot}17, \ 2^7{\cdot}3^2{\cdot}5^2 , \ 2^7{\cdot}3^2{\cdot}17, \ 2^8{\cdot}3{\cdot}5^2, \ 2^8{\cdot}3^2{\cdot}5, \ 2^8{\cdot}5^2,  \ 2^6{\cdot}3{\cdot}5^2, $  \\ [1pt]
  &  $2^8{\cdot}17,  \ 2^8{\cdot}3{\cdot}5, \ 3^2{\cdot}5^2{\cdot}17, \ 2^6{\cdot}3^2 \}$ \\ [1pt] \hline

  $\U_4(2)$  &
 $\{1, \ 2^6{\cdot}3^4, \ 2^5{\cdot}3^3{\cdot}5, \ 2^5{\cdot}3^4, \ 2^6{\cdot}3^3, \ 2^4{\cdot}3^4, \ 2^3{\cdot}3^3{\cdot}5,$ \\ [1pt]
  &  $2^5{\cdot}3^3, \ 2^3{\cdot}3^4, \ 2^6{\cdot}3^2, \ 2^4{\cdot}3^3, \ 3^4{\cdot}5, \ 2^6{\cdot}5  \}$ \\ [1pt] \hline

  $\Sy_4(q)$ (for $q=2^f>4$)
  & $\{1, \ 2q^3(q-1)^2(q+1)^2, \ 2q^3(q^2+1)(q-1)^2, \ (q^2+1)(q-1)^2(q+1)^2,$ \\ [1pt]
  &  $\ 2q^3(q^2+1)(q+1)^2, \ q^4(q-1)(q+1), \ q^4(q+1)^2, \ q^4(q^2+1), \ q^4(q+1)(q-1)^2, $ \\ [1pt]
  &  $\ q^4(q-1)(q+1)^2, \ q^3(q-1)^2(q+1), \ q^3(q-1)(q+1)^2, \ q^4(q-1)^2 \}$ \\ [1pt] \hline


 $\U_4(3)$ &
 $\{1, \ 2^7{\cdot}3^5{\cdot}5, \ 2^7{\cdot}3^6, \ 2^6{\cdot}3^4{\cdot}7, \ 2^5{\cdot}3^6, \ 2^7{\cdot}3^3{\cdot}5, \ 2^6{\cdot}3^5, \ 2^4{\cdot}3^6, \ 2^7{\cdot}3^4, \ 2^5{\cdot}3^5, $  \\ [1pt]
  &  $2^3{\cdot}3^6, \ 3^6{\cdot}7, \ 2^7{\cdot}5{\cdot}7, \ 3^6{\cdot}5 \}$ \\ [1pt] \hline

  ${}^2\F_4(2)'$  &
 $\{1, \ 2^{10}{\cdot}3^3{\cdot}5^2, \ 2^{11}{\cdot}5^2{\cdot}13, \ 2^{10}{\cdot}3^2{\cdot}5^2, \ 2^9{\cdot}3^2{\cdot}13, \ 2^{11}{\cdot}3^3, \ 2^{11}{\cdot}5^2, \ 2^7{\cdot}3^2{\cdot}5, \ 2^{10}{\cdot}3^3, $  \\ [1pt]
  &  $2^{11}{\cdot}13, \ 2^{10}{\cdot}5^2, \ 2^9{\cdot}3^3, \ 2^5{\cdot}5^2{\cdot}13, \ 3^3{\cdot}5^2{\cdot}13 \}$ \\ [1pt] \hline

  $\J_3$ &
 $\{1, \ 2^7{\cdot}3^5{\cdot}19, \ 2^7{\cdot}3^5{\cdot}5, \ 2^5{\cdot}3{\cdot}5{\cdot}17{\cdot}19, \ 2^6{\cdot}3^5{\cdot}5, \ 2^3{\cdot}3^4{\cdot}5{\cdot}19, \ 2^5{\cdot}3^4{\cdot}17, $ \\ [1pt]
   & $2^7{\cdot}17{\cdot}19, \ 2^7{\cdot}3^5, \ 3^4{\cdot}17{\cdot}19, \ 2^6{\cdot}3^4{\cdot}5, \ 3^5{\cdot}5{\cdot}17, \ 2^6{\cdot}3{\cdot}5{\cdot}19, \ 2^6{\cdot}3{\cdot}5{\cdot}17 \}$ \\ [1pt] \hline

 $\G_2(3)$ &
 $\{1, \ 2^5{\cdot}3^6{\cdot}13, \ 3^6{\cdot}7{\cdot}13, \ 2^5{\cdot}3^5{\cdot}7, \ 2^6{\cdot}3^6, \ 2^3{\cdot}3^6{\cdot}7, \ 2^3{\cdot}3^5{\cdot}13, \ 2^5{\cdot} 3^6,\ 2^6{\cdot}3^5, $ \\ [1pt]
   & $3^6{\cdot}13, \ 2^5{\cdot}3^5, \ 2^3{\cdot}3^6, \ 2^6{\cdot}7{\cdot}13, \ 2^6{\cdot}3^4, \ 3^6{\cdot}7 \}$ \\ [1pt] \hline

 $\A_9$ & $\{1,\ 2^3{\cdot}3^4{\cdot}5{\cdot}7, \ 2^6{\cdot}3^3{\cdot}5, \ 2^6{\cdot}3{\cdot}5{\cdot}7, \ 2^4{\cdot}3^4{\cdot}5, \ 2^6{\cdot}3^4, \ 2^5{\cdot}3^3{\cdot}5, \ 2^2{\cdot}3^3{\cdot}5{\cdot}7,$ \\ [1pt]
 & $ 2^3{\cdot}3^4{\cdot}5, \ 2^4{\cdot}3^3{\cdot}5, \ 2^6{\cdot}3^3, \ 2^3{\cdot}3^3{\cdot}7, \ 2^3{\cdot}3^3{\cdot}5, \ 2^5{\cdot}5{\cdot}7, \ 2^6{\cdot}3{\cdot}5, \ 2^3{\cdot}3{\cdot}5{\cdot 7}\}$ \\ [1pt] \hline

 $\J_2$ & $\{1, \ 2^6{\cdot}3^3{\cdot}5^2, \ 2^7{\cdot}3^2{\cdot}5^2, \ 2^5{\cdot}3{\cdot}5^2{\cdot}7, \ 2^7{\cdot}3{\cdot}5^2, \ 2^6{\cdot}3^3{\cdot}5, \ 2^6{\cdot}3{\cdot}5{\cdot}7, \ 2^6{\cdot}3{\cdot}5^2, \ 2^2{\cdot}3^3{\cdot}5{\cdot}7, $ \\ [1pt]
 & $ \ 2^7{\cdot}3^3, \ 2^7{\cdot}5^2, \  2^2{\cdot}3^3{\cdot}5^2, \ 2^7{\cdot}3{\cdot}7, \ 2^2{\cdot}3{\cdot}5^2{\cdot}7, \ 2^5{\cdot}3^2{\cdot}7, \ 2^3{\cdot}3^2{\cdot}5^2\}$ \\ [1pt] \hline

 $\PSL(4,3)$ & $\{1, \ 2^6{\cdot}3^6{\cdot}5, \ 2^7{\cdot}3^5{\cdot}5, \ 2^5{\cdot}3^6{\cdot}5, \ 2^7{\cdot}3^6, \ 2^6{\cdot}3^4{\cdot}13, \ 2^6{\cdot}3^4{\cdot}5, \ 2^5{\cdot}3^6, \ 2^7{\cdot}3^3{\cdot}5, $ \\ [1pt]
 & $ \ 2^6{\cdot}3^5, \ 2^2{\cdot}3^6{\cdot}5, \ 2^5{\cdot}3^4{\cdot}5, \ 2^7{\cdot}3^4, \ 3^6{\cdot}13, \ 2^7{\cdot}5{\cdot}13, \ 2^5{\cdot}3^5, \ 2^3{\cdot}3^6 \}$ \\ [1pt] \hline

 $\M^c L$ & $\{1, \ 2^6{\cdot}3^6{\cdot}5^3{\cdot}7, \ 2^7{\cdot}3^5{\cdot}5^3, \ 2^5{\cdot}3^4{\cdot}5^3{\cdot}11, \ 2^6{\cdot}3^6{\cdot}5^2, \ 3^6{\cdot}5^3{\cdot}11, \ 2^6{\cdot}3^6{\cdot}11, $ \\ [1pt]
 & $
 2{\cdot}3^6{\cdot}5^2{\cdot}7, \ 2^5{\cdot}3^4{\cdot}7{\cdot}11, \
 2^3{\cdot}3^3{\cdot}5^3{\cdot}7, \ 2^7{\cdot}5^3{\cdot}11, $ \\ [1pt]
 & $ 2^4{\cdot}3^4{\cdot}5^3, \ 2^7{\cdot}5^3{\cdot}7, \  2^6{\cdot}3^5{\cdot}7, \
 2^7{\cdot}3^6, \  3^6{\cdot}5^3, \ 2^7{\cdot}3^3{\cdot}5^2 \}$ \\[1pt] \hline

 $\Sy_4(5)$ & $\{1, \ 2^6{\cdot}3^2{\cdot}5^4, \ 2^3{\cdot}3^2{\cdot}5^3{\cdot}13, \ 2^6{\cdot}3^2{\cdot}5^3, \ 2^5{\cdot}3{\cdot}5^4, \ 2^5{\cdot}5^3{\cdot}13, \ 2^3{\cdot}3^2{\cdot}5^4, \ 2^5{\cdot}3^2{\cdot}5^3, \   2^4{\cdot}3{\cdot}5^4,$ \\ [1pt]  &$ 2^2{\cdot}3^2{\cdot}5^4, \ 2^3{\cdot}3{\cdot}5^4, \ 2^6{\cdot}3^3{\cdot}5^2, \ 2^5{\cdot}3{\cdot}5^3, \ 2^3{\cdot}3^2{\cdot}5^3, \ 5^4{\cdot}13, \ 2^2{\cdot}3{\cdot}5^4, \ 2^6{\cdot}3^2{\cdot}13, \ 2^4{\cdot}3{\cdot}5^3\}$ \\ [1pt] \hline

 $\G_2(4)$ & $\{1, \ 2^{12}{\cdot}3^3{\cdot}5{\cdot}7, \ 2^{11}{\cdot}3^2{\cdot}5^2{\cdot}7, \ 2^{10}{\cdot}3^2{\cdot}7{\cdot}13, \ 2^{11}{\cdot}3^3{\cdot}13, \ 2^{10}{\cdot}3^3{\cdot}5^2,$\\[1pt]  &$ 2^{11}{\cdot}5^2{\cdot}13, \  2^{11}{\cdot}3^3{\cdot}7, \ 2^{12}{\cdot}3{\cdot}5^2, \  2^{10}{\cdot}3^3{\cdot}7, \  2^{12}{\cdot}3^2{\cdot}5, \ 2^{12}{\cdot}3{\cdot}7,$\\[1pt]  &$ 2^{10}{\cdot}3{\cdot}5^2, \ 2^{12}{\cdot}3{\cdot}5, \ 3^3{\cdot}5^2{\cdot}7{\cdot}13, \ 2^6{\cdot}3^3{\cdot}5{\cdot}7, \ 2^{12}{\cdot}13, \ 2^{10}{\cdot}3^2{\cdot}5\}$\\ [1pt] \hline

 $\HS$ & $1, \ 2^8{\cdot}3^2{\cdot}5^3{\cdot}7, \ 2^9{\cdot}3^2{\cdot}5^3, \ 2^8{\cdot}3^2{\cdot}5^3, \ 2^9{\cdot}3^2{\cdot}5{\cdot}11, \ 2^9{\cdot}3{\cdot}5^3, \ 2^9{\cdot}5^3, \ 2^8{\cdot}3^2{\cdot}5^2,$ \\ [1pt] &$ 2^9{\cdot}3{\cdot}5{\cdot}7, \ 2^2{\cdot}3^2{\cdot}5^3{\cdot}11, \ 2^4{\cdot}3{\cdot}5^3{\cdot}7, \ 2^8{\cdot}5^3, \ 2^2{\cdot}3^2{\cdot}5^3{\cdot}7,$ \\ [1pt] &$ 2^8{\cdot}3^2{\cdot}11, \ 2^9{\cdot}3^2{\cdot}5, \ 2^6{\cdot}5^2{\cdot}11, \ 2^8{\cdot}3^2{\cdot}7, \ 2^2{\cdot}3^2{\cdot}5{\cdot}7{\cdot}11\}$ \\ \hline

 $\ON$ & $\{1, \ 2^3 {\cdot}3^2{\cdot}5{\cdot}7^3{\cdot}11{\cdot}31, \ 2^3{\cdot}3^4{\cdot}5{\cdot}7^3{\cdot}31, \ 2^7{\cdot}3^4{\cdot}5{\cdot}7^3, \ 2^2{\cdot}3^4{\cdot}5{\cdot}7^3{\cdot}31, \ 2^9{\cdot}3^4{\cdot}7^3, \ 2^3{\cdot}3^4{\cdot}5{\cdot}7^3{\cdot}1,$ \\ [1pt] & $ 2^7{\cdot}3^2{\cdot}5{\cdot}7^2{\cdot}3, \ 2^9{\cdot}3^2{\cdot}5{\cdot}7^3, \ 2^9{\cdot}3^2{\cdot}5{\cdot}11{\cdot}31, \ 2^8{\cdot}3^4{\cdot}7^3, \ 2^6{\cdot}3^4{\cdot}5{\cdot}11{\cdot}19, $ \\ [1pt] & $ 2^9{\cdot}3^4{\cdot}5{\cdot}19, \ 2^8{\cdot}3^4{\cdot}5{\cdot}31,
\ 2^8{\cdot}7^3{\cdot}31, \ 3^4{\cdot}5{\cdot}11{\cdot}19{\cdot}31, \ 2^8{\cdot}7^2{\cdot}11{\cdot}19,
\ 7^3{\cdot}11{\cdot}19{\cdot}31, \ 2^4{\cdot}3^4{\cdot}7^2{\cdot}31\}$ \\ \hline

 $\M_{24}$ & $\{1, \ 2^{10}{\cdot}23, \ 2^8{\cdot}3{\cdot}5{\cdot}11, \ 2^7{\cdot}3{\cdot}5{\cdot}23, \ 2^{10}{\cdot}3^2{\cdot}5, \ 2^4{\cdot}3^3{\cdot}7{\cdot}23, \ 2^6{\cdot}3{\cdot}5{\cdot}7{\cdot}11, \ 2^{10}{\cdot}3{\cdot}5{\cdot}7,$ \\ [1pt] & $2^7{\cdot}3^3{\cdot}5{\cdot}7, \ 2^{10}{\cdot}3^3{\cdot}5, \ 2^{10}{\cdot}3^3{\cdot}7, \ 2^{10}{\cdot}3{\cdot}7{\cdot}11, \ 2^9{\cdot}3{\cdot}7{\cdot}23, \ 2^9{\cdot}3^3{\cdot}23, \ 2^{10}{\cdot}3^2{\cdot}5{\cdot}11,$ \\ [1pt] & $2^{10}{\cdot}3^3{\cdot}5{\cdot}7, \ 2^8{\cdot}3{\cdot}5{\cdot}11{\cdot}23, \ 2^{10}{\cdot}3^2{\cdot}5{\cdot}23, \ 2^{10}{\cdot}3{\cdot}7{\cdot}11{\cdot}23, \ 2^{10}{\cdot}3^3{\cdot}5{\cdot}7{\cdot}11\}$ \\ \hline
\end{longtable}

At the end of this section. We introduce the result about the maximal subgroups of $\Sy_{4}(q)$, $q>4$ and $q$ is even in Table 2. This is obtained from \cite[Table 8.14 ]{JohnDer}. And we can easily calculate the order of these subgroups.



\begin{lem}\label{maxS4q}
Let $K$ be a maximal subgroup of $\mathrm \Sy_4(q)$ where $q=2^f>4$. Then $K$ is isomorphic to one of the groups in Table 2.
\end{lem}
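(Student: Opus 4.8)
The statement is a transcription of the classification of maximal subgroups of the four-dimensional symplectic groups, which for $\Sy_4(q)$ with $q$ even is given completely in \cite[Table 8.14]{JohnDer}; the plan is to read the list off from there and then compute orders. The underlying engine is Aschbacher's theorem, which sorts the maximal subgroups of a group with socle $\Sy_4(q)$ into the geometric classes $\mathcal{C}_1,\dots,\mathcal{C}_8$ together with the class $\mathcal{S}$ of almost simple absolutely irreducible subgroups. First I would run through the geometric classes in dimension $4$ with an alternating form: $\mathcal{C}_1$ yields the two maximal parabolic subgroups, namely the stabilizers of a totally isotropic $1$-space and of a totally isotropic $2$-space; $\mathcal{C}_2$ yields the imprimitive stabilizer $(\Sy_2(q)\times\Sy_2(q)).2$ of a pair of complementary non-degenerate planes and the stabilizer $\GL_2(q).2$ of a pair of complementary totally isotropic planes; $\mathcal{C}_3$ yields the field-extension subgroup $\Sy_2(q^2).2$; $\mathcal{C}_5$ yields the subfield subgroups $\Sy_4(q_0)$ with $q=q_0^r$ for $r$ prime; and $\mathcal{S}$ contributes the Suzuki groups ${}^2B_2(q)$ precisely when $f$ is odd. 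The classes $\mathcal{C}_4,\mathcal{C}_6,\mathcal{C}_7$ do not arise for $\Sy_4(q)$ in even characteristic (for instance $\mathcal{C}_6$ requires odd characteristic).

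The genuinely delicate points, and where the real content of \cite[Table 8.14]{JohnDer} lies, are special to even characteristic. Because $q$ is even the alternating form underlying $\Sy_4(q)$ is also the polarization of a quadratic form, so the orthogonal groups $\mathrm{O}_4^{+}(q)$ and $\mathrm{O}_4^{-}(q)$ embed in $\Sy_4(q)$ as $\mathcal{C}_8$-subgroups; in this low dimension they coincide in structure with members already met in $\mathcal{C}_2$ and $\mathcal{C}_3$ (via $\mathrm{O}_4^{+}(q)\sim(\Sy_2(q)\times\Sy_2(q)).2$ and $\mathrm{O}_4^{-}(q)\sim\Sy_2(q^2).2$), so these descriptions must be reconciled to avoid double-counting. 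Moreover $\Sy_4(q)$ with $q$ even admits the exceptional graph automorphism, which interchanges the two parabolic classes and the two geometric classes swapped by duality; one must therefore keep only those subgroups that are maximal in $\Sy_4(q)$ itself and discard the novelties that become maximal only in an extension by the graph automorphism. The hypothesis $q=2^f>4$ is exactly what removes the small-field exceptions (extra containments and additional $\mathcal{S}$-candidates occurring at $q\in\{2,4\}$) recorded in \cite{JohnDer}, so that the surviving list is the uniform one of Table 2. This reconciliation of the $\mathcal{C}_8$ collapses with the graph-automorphism maximality conditions is the main obstacle; everything else is bookkeeping.

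Finally, with the isomorphism types fixed, I would compute $|K|$ for each candidate directly from $|\Sy_4(q)|=q^4(q^2-1)(q^4-1)$ together with the standard orders $|\GL_2(q)|=q(q-1)^2(q+1)$, $|\Sy_2(q^m)|=q^m(q^{2m}-1)$, and $|{}^2B_2(q)|=q^2(q^2+1)(q-1)$, while the parabolic orders follow by dividing $|\Sy_4(q)|$ by the number of isotropic subspaces of the relevant dimension. Each of these is an elementary product, and assembling them produces the order column of Table 2, which completes the verification.
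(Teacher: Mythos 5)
Your proposal is correct and is essentially the paper's own proof: the paper gives no argument beyond the citation to \cite[Table 8.14]{JohnDer} together with the remark that the orders are easily computed, and your discussion of the Aschbacher classes is accurate background on how that table is derived. Two small points of reconciliation with Table 2: the $\mathcal{C}_2$ candidate $\GL_2(q).2$ you mention is not maximal when $q$ is even (it is absorbed into $SO_4^{+}(q)$, which is why no such row appears in the table), and the paper in fact retains rather than discards the graph-automorphism novelties ($[q^4]:C_{q-1}^{2}$ and the torus normalizers $C_{q-1}^{2}:D_8$, $C_{q+1}^{2}:D_8$, $C_{q^2+1}:4$) --- harmlessly, since superfluous rows cannot falsify the assertion that every maximal subgroup is isomorphic to some listed group.
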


\begin{table}[H]
\caption{Maximal subgroups of $\mathrm \Sy_4(q)$, $q>4$, $q\ is\ even$}

\begin{tabular}[Table 2]{|c|c|c|} \hline
    \mbox{ Subgroup}  &\mbox{ Order}  &\mbox{ Notes}  \\ \hline
   $E_{q}^{3}:GL_{2}(q)$ & $q^3(q^2-1)(q^2-q)$ &  point \quad stabiliser \\ \hline
   $[q^4]:{C_{q-1}}^2$ & $q^4(q-1)^2$ &      \\  \hline
   $Sp_{2}(q)\wr2$   & $2q^2(q-1)^2(q+1)^2$  &           \\ \hline
   $Sp_{2}(q^2):2$    & $2q^2(q^4-1)$         &     \\  \hline
   ${C_{q-1}}^{2}:D_{8}$   &  $8(q-1)^2$ & $q\ne 4$   \\   \hline
   ${C_{q+1}}^{2}:D_{8}$   &  $8(q+1)^2$ &      \\  \hline
   $C_{q^2+1}:4$   & $4(q^2+1)$   &      \\   \hline
   $Sp_{4}(q_{0})$ &  $q_{0}^{4}(q_{0}^{4}-1)(q_{0}^{2}-1)$     & $q=q_{0}^{r}$, $r$\ is\ a\ prime     \\ \hline
   $SO_{4}^{+}(q)$ & $\frac{1}{2}q^2(q^2-1)$ &   \\ \hline
   $SO_{4}^{-}(q)$ & $q^2(q^2+1)(q^2-1)$     &  \\  \hline
   $Sz(q)$   &$q^2(q^2+1)(q-1)$    & $f\ge 3$ \ $f$\ is\ odd    \\ \hline
\end{tabular}
\end{table}

Below we list the result about the maximal subgroups of $\mathrm{\PSL}(4,3)$ in Table 3. This is obtained from \cite{Conway}.

\begin{lem}\label{psl43}
Let $K$ be a maximal subgroup of $\mathrm{\PSL}(4,3)$. Then $K$ is isomorphic to one of the groups in Table 3.
\end{lem}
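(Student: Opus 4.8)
The statement is a catalogue result, so the plan is to derive Table~3 from the known classification of the maximal subgroups of the four-dimensional classical group $\PSL(4,3)$, paralleling the treatment of $\Sy_4(q)$ in Lemma~\ref{maxS4q}. First I would record the order $|\PSL(4,3)| = 2^7 \cdot 3^6 \cdot 5 \cdot 13$, obtained from $|\GL(4,3)| = (3^4-1)(3^4-3)(3^4-3^2)(3^4-3^3)$ on dividing by $(3-1)\gcd(4,3-1) = 4$. This order serves as a checksum: every entry of Table~3 must have order dividing it, and each index $[\PSL(4,3):K]$ must be an integer.

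Since $\PSL(4,3)$ is a low-dimensional classical group, Aschbacher's theorem applies and sorts its maximal subgroups into the geometric families $\mathcal{C}_1,\dots,\mathcal{C}_8$ together with the class $\mathcal{S}$ of almost simple groups acting absolutely irreducibly. I would read the resulting conjugacy classes directly from the Atlas \cite{Conway}. The $\mathcal{C}_1$ (parabolic) members give the point and hyperplane stabilisers, each isomorphic to $3^3{:}\PSL(3,3)$ of order $3^3\cdot 5616 = 151632$ and interchanged by the graph automorphism, together with the $2$-space (line) stabiliser; the remaining geometric classes contribute the imprimitive stabilisers of direct-sum decompositions, the field- and subfield-type subgroups, and the classical subgroup fixing a symplectic form, namely $\U_4(2){:}2$ of order $51840$; and class $\mathcal{S}$ supplies the small almost simple maximal subgroups. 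Full structural data for each class is recorded in \cite{Conway} (and, with the geometric analysis spelled out, in \cite{JohnDer}).

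The remaining work is purely verification: for each conjugacy class of maximal subgroups listed in the Atlas I would confirm that its isomorphism type and order match a row of Table~3, and conversely that every row of Table~3 is realised, so that the two lists coincide. I expect no genuine mathematical obstacle, as the classification for this specific group is classical and completely tabulated. The only points demanding care are transcription accuracy — correct orders and correct structural descriptions — and remembering that the two parabolic classes $3^3{:}\PSL(3,3)$ are genuinely distinct conjugacy classes, fused only in $\Aut(\PSL(4,3))$, so that both must appear as separate entries in Table~3.
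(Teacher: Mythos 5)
Your proposal takes the same route as the paper: the paper offers no argument for Lemma \ref{psl43} beyond citing the Atlas \cite{Conway}, and your plan is precisely that — read the classification of maximal subgroups from \cite{Conway} (organized via Aschbacher's theorem as in \cite{JohnDer}) and verify orders and indices against Table 3. Your own cross-checking step would also catch the two small wrinkles in your sketch: for $q=3$ the imprimitive and subfield geometric classes contribute no maximal subgroups at all, and besides the two parabolic classes there are also two classes of symplectic-type subgroups (structure $\U_4(2){:}2$, order $51840$, fused by the diagonal automorphism), which is why that row appears twice in Table 3.
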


\begin{table}[H]\label{table 3}
\caption{Maximal subgroups of $\mathrm {\PSL}(4,3)$}

\begin{tabular}[Table 2]{|c|c|c|} \hline
    \mbox{ Subgroup structure}  &\mbox{ Order}  &\mbox{ Index}  \\ \hline
   $3^3:\mathrm{L}_3(3)$ & $151632$ &  $40$ \\ \hline
   $3^3:\mathrm{L}_3(3)$ & $151632$ &  $40$ \\ \hline
   $\mathrm{U}_4(2)$ & $51840$ &  $117$      \\  \hline
   $\mathrm{U}_4(2)$ & $51840$ &  $117$      \\  \hline
   $3^4:2(\mathrm{A}_4\times \mathrm{A}_4).2$   & $46656$  &   $130$        \\ \hline
   $(4\times\mathrm{A}_6):2$ & $2880$ & $2106$ \\  \hline
   $\mathrm{S}_6$   &  $720$ & $8424$   \\   \hline
   $\mathrm{S}_4\times\mathrm{S}_4$ &$576$ &  $10530$    \\  \hline
\end{tabular}
\end{table}

\begin{lem} \label{perfect}
Let $G$ be a finite group. If no element of $\cod(G)$ is a power of a prime, then $G$ is a perfect group.
\end{lem}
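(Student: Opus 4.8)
The plan is to prove the contrapositive: assuming $G$ is \emph{not} perfect, I will exhibit an irreducible character of $G$ whose codegree is a prime, hence a prime power, contradicting the hypothesis. Here I read ``power of a prime'' as $p^k$ with $k\ge 1$; note that the trivial character always contributes $\cod(1_G)=1\in\cod(G)$, so the convention must exclude $1$, for otherwise the hypothesis could never be satisfied by any group.

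So I would suppose $G'\neq G$, so that $A:=G/G'$ is a nontrivial finite abelian group. Fixing a prime $p$ dividing $|A|$, the structure theorem for finite abelian groups shows that $A$ has a nontrivial cyclic $p$-group as a direct factor, and hence $A$ admits a surjective homomorphism onto $\Z/p\Z$. Composing with the canonical projection $G\to A$ produces a surjection $\varphi\colon G\twoheadrightarrow \Z/p\Z$ whose kernel $M:=\ker\varphi$ has index $p$ in $G$. Next I would transport a faithful character of the cyclic group back through $\varphi$: since $\Z/p\Z$ is cyclic it carries a faithful irreducible (necessarily linear) character $\mu$, and I set $\lambda:=\mu\circ\varphi\in\irr(G)$. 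Then $\lambda$ is linear, so $\lambda(1)=1$, and $\ker\lambda=M$ because $\mu$ is faithful. Consequently
\[
\cod(\lambda)=\frac{|G:\ker\lambda|}{\lambda(1)}=|G:M|=p,
\]
a prime, so $\cod(G)$ contains a prime power, which is the desired contradiction.

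The argument involves no serious obstacle; it is elementary character theory, and I expect the write-up to be very short. The only points requiring a word of care are (i) the reading of ``power of a prime'' so that $1$ is excluded, and (ii) confirming that the pulled-back character $\lambda$ is genuinely irreducible with kernel exactly $M$, both of which are immediate from $\lambda$ being linear and $\mu$ being faithful. I would present the explicit prime-order quotient rather than argue abstractly about the group of linear characters, since the concrete construction makes the codegree computation transparent and keeps the dependence on the abelianization $G/G'$ visible.
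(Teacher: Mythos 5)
Your proof is correct and takes essentially the same route as the paper: both argue by contrapositive, produce a normal subgroup of prime index $p$ (you via the structure theorem applied to $G/G'$, the paper via a maximal normal subgroup containing $G'$), and exhibit a linear character with kernel that subgroup, whose codegree is then $p$. The differences are purely cosmetic, so no further comparison is needed.
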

\begin{proof}
Let $G$ be a finite group and suppose $G'<G$. Let $N$ be a maximal normal subgroup of $G$ containing $G'$. Then, $G/N$ is an abelian simple group of order $p$ for some prime $p$. Let $\chi\in \irr(G/N)$ be a non-principal character. Since $G/N$ is abelian, it follows that $\cod(\chi)=\frac{|G/N:\ker(\chi)|}{\chi(1)}=\frac{|G/N|}{1}=p$. Since we can view $\cod(G/N)$ as a subset of $\cod(G)$, then there exists an element of $\cod(G)$ that is a power of a prime. \end{proof}

\begin{rem}\label{perfectgroup}
\normalfont{If $\cod(G)=\cod(H)$ where $G$ is a finite group and $H$ is a finite, non-abelian simple group, it follows that $G$ is a perfect group. Otherwise $\cod(G/G')\subseteq \cod(G)$ contains a power of a prime, which is a contradiction by \cite[Lemma 3.6]{Aliz}.}
\end{rem}

For notation, let $N$ be a normal subgroup of $G$ and $\irr(G|N):= \irr(G)-\irr(G/N)$. For any $\lambda \in \irr(N)$, let $I_G(\lambda)$ be the inertia group of $\lambda$ in $G$. The set of irreducible constituents of $\lambda^{I_G(\lambda)}$ is denoted $\irr(I_G(\lambda)|\lambda)$.

\section{Main Results}

\begin{lem}\label{redS44}
Let $G$ be a finite group with $\cod(G)=\cod(\Sy_{4}(4))$. If $N$ is a maximal normal subgroup of $G$, then $G/N\cong \Sy_{4}(4)$.
\end{lem}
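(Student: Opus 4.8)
The plan is to combine the structural constraints on $G$ with the classification in Lemma \ref{list}. First I would record that, since $\cod(G)=\cod(\Sy_4(4))$ and $\Sy_4(4)$ is non-abelian simple, Remark \ref{perfectgroup} gives that $G$ is perfect. As $N$ is a maximal normal subgroup, $G/N$ is simple, and being a quotient of a perfect group it is itself perfect, hence non-abelian simple. Viewing $\irr(G/N)$ inside $\irr(G)$ in the usual way (a character with $N$ in its kernel has the same codegree whether computed in $G$ or in $G/N$), we obtain $\cod(G/N)\subseteq\cod(G)=\cod(\Sy_4(4))$, so in particular $|\cod(G/N)|\le 12$. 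By Lemma \ref{list}, $G/N$ is therefore one of the groups listed in items (a)--(i).

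The quickest case is $|\cod(G/N)|=12$: by Lemma \ref{list}(i) the only simple group with exactly $12$ codegrees is $\Sy_4(4)$, so $G/N\cong\Sy_4(4)$ and we are finished. Hence the substance of the proof is to eliminate every possibility with $|\cod(G/N)|\le 11$, i.e. all of items (a)--(h), by showing that in each case $\cod(G/N)\not\subseteq\cod(\Sy_4(4))$.

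For the elimination I would use two elementary features of the entry for $\Sy_4(4)$ in Table 1: every codegree divides $|\Sy_4(4)|=2^8\cdot3^2\cdot5^2\cdot17$, and the smallest nontrivial codegree equals $2^6\cdot3^2=576$. The sporadic and small fixed groups fall immediately to the divisibility test, since each has a codegree divisible by a prime outside $\{2,3,5,17\}$: e.g. $7\mid|\PSL(3,4)|,|\A_7|,|\G_2(2)'|,|\PSL(4,2)|$; $11\mid|\M_{11}|,|\M_{12}|,|\M_{22}|$; $13\mid|\PSL(3,3)|$; and $\M_{23},\J_1$ involve still larger primes. For the infinite families I would exploit that each listed codegree set contains a term carrying the full defining-characteristic part of $|S|$: for ${}^2B_2(q)$ the term $q^2(q-1)$, for $\PSL(3,q)$ and $\PSU(3,q)$ a term divisible by $q^3$, and for ${}^2\G_2(q)$ a term with $3$-part $q^3$. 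Requiring such a term to divide $2^8\cdot3^2\cdot5^2\cdot17$ forces $q$ to be tiny: if $q=2^a$ then $2^{2a}$ or $2^{3a}\le 2^8$ bounds $a$, while if $q=p^a$ with $p$ odd then $p^{3a}$ must divide the $p$-part of $2^8\cdot3^2\cdot5^2\cdot17$ (namely $3^2$, $5^2$, or $17$), impossible for $a\ge 1$. In every surviving small instance the field condition $q>4$ (resp.\ $q=3^{2f+1}\ge 27$) is already violated, so these families are excluded outright. The two rank-one families over small fields, $\PSL(2,2^f)$ (item (a)) and $\PSL(2,p^f)$ (item (b)), require the extra magnitude input: after the divisibility bound leaves only $\PSL(2,16),\PSL(2,17)$ (together with $\A_5,\A_6$) as prime-admissible, one checks directly from the formulas in Table 1 that their nontrivial codegrees (e.g. $240,272,255$ for $\PSL(2,16)$ and $136,153,144,272$ for $\PSL(2,17)$) all lie strictly below $576$, hence none is a codegree of $\Sy_4(4)$.

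I expect the main obstacle to be organizing the treatment of the infinite families $\PSL(3,q),\PSU(3,q),{}^2B_2(q),{}^2\G_2(q),\PSL(2,q)$ uniformly, so that the $p$-part divisibility argument genuinely covers all $q$ at once rather than degenerating into endless case work. The subtlety is that divisibility into $2^8\cdot3^2\cdot5^2\cdot17$ is only a necessary condition, and for the few families that survive it (the $\PSL(2,q)$ cases) the decisive step is the magnitude comparison against the minimal nontrivial codegree $576$, which must be invoked explicitly to close the argument.
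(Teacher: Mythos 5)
Your proposal is correct and shares the paper's skeleton (perfectness of $G$ via Remark \ref{perfectgroup}, simplicity of $G/N$, $\cod(G/N)\subseteq\cod(G)$ forcing $|\cod(G/N)|\le 12$, then case analysis over Lemma \ref{list}), but the elimination step is organized along a genuinely different and more uniform route. The paper disposes of cases (a)--(h) by bespoke comparisons: it matches the unique nontrivial odd codegree $3^2\cdot5^2\cdot17$ of $\Sy_4(4)$ against the odd codegrees of each candidate ($k^2-1$ for $\PSL(2,2^f)$, $(q-1)(q^2+1)$ for ${}^2B_2(q)$, $q^3(q^2\pm q+1)$ and its variants for $\PSL(3,q)$, $\PSU(3,q)$), uses the observation that no codegree of $\Sy_4(4)$ is half of another to kill $\PSL(2,p^f)$ with $p$ odd, and exhibits specific excluded codegrees for the sporadic and small fixed groups, with separate parity subcases and integer-root checks throughout. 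Your two filters --- every element of $\cod(G/N)$ must divide $|\Sy_4(4)|=2^8\cdot3^2\cdot5^2\cdot17$, and must be at least the minimal nontrivial codegree $2^6\cdot3^2=576$ --- dispose of all the fixed groups at once (each has a codegree involving a prime in $\{7,11,13,19,23\}$) and handle $\PSL(3,q)$, $\PSU(3,q)$, ${}^2\G_2(q)$ uniformly in $q$ via the $p$-part of a codegree divisible by $q^3$, reserving the magnitude test for the four surviving groups $\A_5$, $\A_6$, $\PSL(2,16)$, $\PSL(2,17)$; this buys brevity and avoids the paper's scattered integer-root verifications. One point in your write-up needs repair: for ${}^2B_2(q)$ the $2$-part bound $q^2\le 2^8$ leaves the surviving value $q=8$, and ${}^2B_2(8)$ does satisfy the field condition $q=2^{2f+1}\ge 8$, so your claim that every surviving instance violates the field condition fails there. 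The gap is closed instantly by your own divisibility test, since the codegree $q^2(q-1)=2^6\cdot 7$ involves the prime $7$, which does not divide $|\Sy_4(4)|$, but as written that case is not covered.
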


\begin{proof}
Let $N$ be a maximal normal subgroup of $G$. Since $\cod(G)=\cod(\Sy_{4}(4))$, we see that $G$ is perfect by our earlier remark. Then $G/N$ is a non-abelian simple group. Since $\cod(G/N)\subseteq \cod(G)$, we see that $|\cod(G/N)|$ is either $4, 5, 6, 7, 8, 9, 10, 11$, or $12$ by Lemma \ref{list} and we analyze the cases separately.

\item[(a)] $|\cod(G/N)|=4$. By Lemma \ref{list}, $G/N \cong \PSL(2,k)$ where $k = 2^f \geq 4,$Then $\cod(G/N)=\{1, k(k-1), k(k+1), k^2-1\}$.
Since $3^2\cdot5^2\cdot17$ is the only nontrivial odd codegree in $\cod(\Sy_{4}(4))$, we have that $k^2-1=3^2\cdot5^2\cdot17$.This is a contradiction since $k\in \mathbb{Z}$.

\item[(b)] $|\cod(G/N)|=5$. By Lemma \ref{list}, $G/N\cong \PSL(2,k)$ where $k$ is an odd prime power and $\cod(G/N)=\left\{ 1, \frac{k(k-1)}{2}, \frac{k(k+1)}{2}, \frac{k^2-1}{2}, k(k-\epsilon(k))\right\}$ where $\epsilon(k)=(-1)^{(k-1)/2}$.
Then $k(k-\epsilon(k))/2$, $k(k-\epsilon(k))\in \cod(G)$, a contradiction for we can't find a codegree which is the half of another codegree in $\cod(\Sy_{4}(4))$.

\item[(c)] $|\cod(G/N)|=6$. By observation, $3^2\cdot 5\cdot7$ is the only nontrivial odd codegree in $\cod(\PSL(3,4))$. However $3^2\cdot5\cdot7\notin\cod(\Sy_{4}(4))$, so $\cod(\PSL(3,4))\not\subset \cod(\Sy_{4}(4))$.
Suppose $\cod(G/N)=\cod({}^2B_2(q))$. $(q-1)(q^2+1)=3^2\cdot5^2\cdot17$ since these are the only nontrivial odd codegrees. We note that $3$ does not divide the order of the Suzuki groups, it is a contradiction.

\item[(d)]  $|\cod(G/N)|=7$. We note that the nontrivial odd codegree in codegree sets of $\PSL(3,3)$, $\A_{7}$, $\M_{11}$ and $\J_{1}$ can't be $3^2\cdot5^2\cdot17$.

\item[(e)] $|\cod(G/N)|=8$.
If $G/N\cong \PSU(3,q)$ with $4<q\not\equiv -1\ (\bmod\ 3)$. Then if $q$ is odd, we have $q^3(q^2-q+1)=3^2\cdot5^2\cdot17$. Since no cube of a prime divides $3^2\cdot5^2\cdot17$, we have a contradiction. If $q$ is even, we have $(q^2-q+1)(q+1)^2(q-1)=3^2\cdot5^2\cdot17$. Then this does not yield an integer root for $q$, a contradiction.

If $G/N \cong \PSL(3,q)$ with $4<q\not\equiv 1\ (\bmod 3)$ and if $q$ is odd, then $q^3(q^2+q+1)=3^2\cdot5^2\cdot17$. We have the same contradiction with $q$. If $q$ is even, then $(q^2+q+1)(q^2-1)(q-1)=3^2\cdot5^2\cdot17$. This is a contradiction since $q$ is an integer.

If $G/N \cong \G_2(2)'$, we see $3^3\cdot7\notin\cod(\Sy_{4}(4))$, so $\cod(\G_2(2)')\not\subseteq \cod(\Sy_{4}(4))$. It is a contradiction.

\item[(f)] $|\cod(G/N)|=9$.
Suppose $G/N \cong \PSL(3,q)$ with $4<q \equiv 1\ (\bmod\ 3)$. If $q$ is odd, then $\frac{1}{3}q^3(q^2+q+1)=3^2\cdot5^2\cdot17$. This is a contradiction since $q \in \Z$. If $q$ is even, then $\frac{1}{3}(q^2+q+1)(q+1)(q-1)^2=3^2\cdot5^2\cdot17$. Then This also does not yield an integer root for $q$. This is a contradiction.

$G/N \cong \PSU(3,q))$ with $4<q \equiv -1\ (\bmod 3)$. If $q$ is odd, then $\frac{1}{3}q^3(q^2-q+1)=3^2\cdot5^2\cdot17$. This does not yield an integer root for $q$, a contradiction. If $q$ is even, then $\frac{1}{3}(q^2-q+1)(q+1)^2(q-1)=3^2\cdot5^2\cdot17$. This also does not yield an integer root for $q$, a contradiction.

\item[(g)] $|\cod(G/N)|=10$. Then $G/N\cong \M_{22}$. We note that $2^7\cdot7\cdot11 \notin \cod (\Sy_{4}(4))$, we have that $\cod (\M_{22})\not\subseteq \cod(\Sy_{4}(4))$. It is a contradiction.

\item[(h)] $|\cod(G/N)|=11$. By observation, the codegree sets of $\PSL(4,2), \M_{12}$ and $\M_{23}$ contain $3^2 \cdot 5 \cdot 7$, $2^2 \cdot 3 \cdot 5 \cdot 11$ and $3^2 \cdot5\cdot11 \cdot 23$ respectively, none of which are elements of the codegree set of $\Sy_{4}(4)$.

If $G/N\cong {}^2\G_2(q)$, $q=3^{2f+1}$. Thus $3^2{\cdot}5^2{\cdot}17$ could equal to either $3^{5f+3}(q^2-q+1)$, $q^3(q+1-3m)$, or $q^3(q+1+3m)$. By considering the power of $3$ or $5$ in those numbers, one see that this cannot happen.

\item[(i)] $|\cod(G/N)|=12$. Then $G/N\cong \Sy_{4}(4)$. It is easily checked that $\cod(G/N)=\cod(G)$. Thus $G/N\cong \Sy_{4}(4)$.
\end{proof}

\begin{lem}\label{redU42}
Let $G$ be a finite group with $\cod(G)=\cod(\U_{4}(2))$. If $N$ is a maximal normal subgroup of $G$, then $G/N \cong \U_{4}(2)$.
\end{lem}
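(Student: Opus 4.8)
The plan is to follow the same reduction scheme as in Lemma~\ref{redS44}. Since $\cod(G)=\cod(\U_4(2))$ and $\U_4(2)$ is non-abelian simple, Remark~\ref{perfectgroup} shows $G$ is perfect, so for a maximal normal subgroup $N$ the quotient $G/N$ is a non-abelian simple group with $\cod(G/N)\subseteq\cod(G)=\cod(\U_4(2))$. As $|\cod(\U_4(2))|=13$, Lemma~\ref{list} leaves exactly the possibilities in items (a)--(j), i.e.\ $4\le|\cod(G/N)|\le 13$, and the goal is to eliminate every candidate except $\U_4(2)$ itself. Two invariants of $\cod(\U_4(2))$ drive the whole argument: every codegree divides $|\U_4(2)|=2^6\cdot3^4\cdot5$, so only the primes $2,3,5$ occur and the $2$- and $3$-parts of any codegree are at most $2^6$ and $3^4$; and the only nontrivial \emph{odd} codegree is $3^4\cdot5=405$.

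First I would clear out every family that forces a prime outside $\{2,3,5\}$ into a codegree: this kills $\PSL(3,4)$ and all of (d) ($\PSL(3,3),\A_7,\M_{11},\J_1$ via $7,11,13,19$), $\G_2(2)'$ in (e) (via $7$), $\M_{22}$ in (g), $\PSL(4,2),\M_{12},\M_{23}$ in (h) (via $7,11,23$), and $\Sy_4(4)$ in (i) (via $17$), since in each case some codegree listed in Table~1 is divisible by a prime not dividing $|\U_4(2)|$. For the two $\PSL(2,k)$ cases I would use parity: in (a), $k=2^f\ge4$ gives unique odd codegree $k^2-1$, so $k^2-1=405$ has no integer solution; in (b), for odd $k$ exactly one of $\tfrac{k(k-1)}2,\tfrac{k(k+1)}2$ is odd according to $k\bmod 4$, and setting that equal to $405$ gives $k(k\pm1)=810$, which has no integer root. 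The ``half of another codegree'' trick used for $\Sy_4(4)$ is unavailable here, since $2^6\cdot3^4$ and $2^5\cdot3^4$ already form such a pair; parity is the clean substitute. The Suzuki case in (c) is handled because its unique odd codegree $(q-1)(q^2+1)$ is prime to $3$, whereas $405$ is not.

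For the remaining infinite families I would argue by size and by prime-power valuations. In (e) and (f), the relevant $\PSL(3,q)$ and $\PSU(3,q)$ each have a nontrivial odd codegree (for $q$ odd a multiple of $q^3$, for $q$ even a product of three pairwise-close odd factors), which must equal $405$; since $4<q$ forces $q\ge 5$ (odd) or $q\ge 8$ (even), these odd codegrees already exceed $405$, a contradiction. In (h) the group ${}^2\G_2(3^{2f+1})$ has the codegree $q^3(q^2-1)$ with $q=3^{2f+1}$, whose $3$-part is $3^{3(2f+1)}\ge 3^9>3^4$, so it cannot divide $|\U_4(2)|$. Finally in (j) the group $\Sy_4(q)$ with $q=2^f>4$ has the codegree $q^4(q^2+1)$ of $2$-part $2^{4f}\ge 2^{12}>2^6$, again impossible; this leaves only $G/N\cong\U_4(2)$, whence $\cod(G/N)=\cod(G)$ and the lemma follows. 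The only genuine obstacle is case (b), where the divisibility shortcut of Lemma~\ref{redS44} breaks down and must be replaced by the mod-$4$ parity computation; everything else reduces to the two bookkeeping invariants above.
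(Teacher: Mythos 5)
Your proposal is correct, and it keeps the paper's skeleton (perfect $G$, simple quotient $G/N$ with $\cod(G/N)\subseteq\cod(\U_4(2))$, case analysis over Lemma~\ref{list}), but it eliminates several candidates by genuinely different means. The paper works case by case: it exhibits a codegree visibly absent from $\cod(\U_4(2))$, or sets up an equation such as $q^3(q^2-q+1)=3^4\cdot5$ and argues it has no admissible root. You instead run everything through two uniform invariants --- every element of $\cod(\U_4(2))$ divides $|\U_4(2)|=2^6\cdot3^4\cdot5$ (so only the primes $2,3,5$ occur, with $2$-part at most $2^6$ and $3$-part at most $3^4$), and $3^4\cdot5$ is the unique nontrivial odd codegree --- which kills all the sporadic and fixed groups at one stroke and replaces the paper's equation-solving for ${}^2\G_2(3^{2f+1})$ and $\Sy_4(q)$ by one-line $p$-part bounds (the $3$-part of $q^3(q^2-1)$ is at least $3^9$, the $2$-part of $q^4(q^2+1)$ is at least $2^{12}$). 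The most substantive divergence is case (b): the paper asserts that $2^3\cdot3^4$ is the \emph{only} element of $\cod(\U_4(2))$ that is half of another codegree, which is not literally true ($2^5\cdot3^4$ is half of $2^6\cdot3^4$, $2^4\cdot3^3$ is half of $2^5\cdot3^3$, etc.), although the paper's conclusion survives once all such pairs are enumerated and each is checked to fail. Your mod-$4$ parity argument --- exactly one of $k(k\pm1)/2$ is odd, hence equals $3^4\cdot5$, and $k(k\pm1)=810$ has no integer solution --- is correct as stated and sidesteps that defect entirely. Likewise your size bounds in cases (e)--(f) (the unique odd codegree of $\PSL(3,q)$ or $\PSU(3,q)$ with $q>4$ already exceeds $405$) are sound and more transparent than the repeated ``no integer root'' checks; what the paper's route buys in exchange is only that it follows the identical template used for all the other groups in the article.
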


\begin{proof}
Let $N$ be a maximal normal subgroup of $G$.
Similarly with above lemma, we have that $G/N$ is a non-abelian simple group and $|\cod(G/N)|$ is $4, 5, 6, 7, 8, 9, 10, 11, 12$, or $13$. Next we analyze the cases separately.

\item[(a)]$|\cod(G/N)|=4$. Then $G/N\cong \PSL(2,k)$ where $k=2^f\geq 4$ and $\cod(G/N)=\{1, k(k-1), k(k+1), k^2-1\}$.
Hence $k^2-1=3^4\cdot5$, $k\in\mathbb{Z}$ which is a contradiction.

\item[(b)] $|\cod(G/N)|=5$. Then $G/N\cong \PSL(2,k)$ where $k$ is an odd prime power and $\cod(G/N)=\left\{ 1, \frac{k(k-1)}{2}, \frac{k(k+1)}{2}, \frac{k^2-1}{2}, k(k-\epsilon(k))\right\}$ where $\epsilon(k)=(-1)^{(k-1)/2}$. Then $k(k-\epsilon(k))/2$, $k(k-\epsilon(k))\in \cod(G)$, and this implies that $\frac{k(k\pm 1)}{2}=2^3\cdot3^4$ since $2^3\cdot3^4$ is the only element of $\cod(G)$ that is half of another codegree. In either case, $k\in \Z$ which is a contradiction.

\item[(c)] $|\cod(G/N)|=6$. If $G/N\cong {}^2B_2(q)$, then $(q-1)(q^2+1)=3^4\cdot5$ since these are the only nontrivial odd codegrees. We note that $3$ does not divide the order of the Suzuki group, a contradiction.

If $G/N\cong \PSL(3,4)$, then $3^2\cdot5\cdot7 \notin \cod(\U_{4}(2))$ and we clearly see that $\cod(G/N)\not\subset \cod(G)$.

\item[(d)] $|\cod(G/N)|=7$. Then $G/N\cong \PSL(3,3)$, $\A_{7}$, $\M_{11}$ or $\J_{1}$. We need only to observe that none of the codegree sets for this case are the same.

\item[(e)]$|\cod(G/N)|=8$.
If $G/N\cong \PSU(3,q)$ with $4<q\not\equiv -1\ (\bmod\ 3)$. Then if $q$ is odd, we have $q^3(q^2-q+1)=3^4\cdot5$. Since no cube of a prime divides $3^4\cdot5$, we have a contradiction. If $q$ is even, we have $(q^2-q+1)(q+1)^2(q-1)=3^4\cdot5$. Then we can get a contradiction which means $q$ is not an integer.

If $G/N \cong \PSL(3,q)$ with $4<q\not\equiv 1\ (\bmod 3)$ and if $q$ is odd, then $q^3(q^2+q+1)=3^4\cdot5$. We have the same contradiction with $q$. If $q$ is even, then $(q^2+q+1)(q^2-1)(q-1)=3^4\cdot5$. This does not yield an integer root for $q$, it is a contradiction.

If $G/N \cong \G_2(2)'$ we see $3^3\cdot7\notin \cod(\U_{4}(2))$, so $\cod(\G_2(2)')\not\subseteq\cod(\U_{4}(2))$. It is a contradiction.

\item[(f)] $|\cod(G/N)|=9$.
Suppose $G/N \cong \PSL(3,q)$ with $4<q\equiv 1\ (\bmod\ 3)$. If $q$ is odd, then $\frac{1}{3}q^3(q^2+q+1)=3^4\cdot5$. Then $q$ is not an integer, it is a contradiction. If $q$ is even, then $\frac{1}{3}(q^2+q+1)(q+1)(q-1)^2=3^4\cdot5$. This equation means $q$ is not an integer, a contradiction.

Suppose $G/N\cong \PSU(3,q))$ with $4<q\equiv -1\ (\bmod 3)$. If $q$ is odd, then $\frac{1}{3}q^3(q^2-q+1)=3^4\cdot5$. We have the same contradiction for $q$ is an integer. If $q$ is even, then $\frac{1}{3}(q^2-q+1)(q+1)^2(q-1)=3^4\cdot5$. $q$ is not an integer, it is a contradiction.

 \item[(g)]$|\cod(G/N)|=10$. Then $G/N\cong \M_{22}$. We observe that $2^7\cdot3\cdot5\cdot11 \notin \cod(\U_{4}(2))$. Thus we have $\cod(G/N)\not\subseteq\cod(G)$.

\item[(h)] $|\cod(G/N)|=11$. If $G/N\cong \PSL(4,2)$, $\M_{12}$ or $\M_{23}$. By observation, the codegree sets of $\PSL(4,2), \M_{12}$ and $\M_{23}$ contain $3^2 \cdot 5 \cdot 7$, $2^2 \cdot 3 \cdot 5 \cdot 11$ and $3^2 \cdot5\cdot11 \cdot 23$ respectively, none of which are elements of the codegree set of $\U_{4}(2)$. It is easily checked that $\cod(G/N)\not\subseteq\cod(G)$. It is a contradiction.

If $G/N \cong {}^2\G_2(q)$, ($q=3^{2f+1}$). We have that $3^4\cdot5$ equals to either $3^{5f+3}(q^2-q+1)$, $q^3(q+1-3m)$, or $q^3(q+1+3m)$. This is a contradiction since $q$ is not an integer.

\item[(i)] $|\cod(G/N)|=12$.  Then $G/N\cong \Sy_{4}(4)$. By observation, $3^2\cdot5^2\cdot17\notin\cod(\U_{4}(2))$. Thus $\cod(G/N)\not\subseteq\cod(G)$, a contradiction.

\item[(j)] $|\cod(G/N)|=13$. Then $\cod(G/N)=\cod(\U_{4}(2))$ or $\cod(\Sy_{4}(q))$.

$\cod(G/N)=\cod(\Sy_{4}(q))$, for $q=2^f>4$. We can easily checked that $3^4\cdot5=(q^2+1)(q+1)^2(q-1)^2$, as the are the only nontrivial codegrees in each set. If $3^4\mid(q+1)^2(q-1)^2$, then $q^2+1\mid5$, this implies $q=2$, which is a contradiction for $(2+1)^2(2-1)\ne3^4$. If $5\mid(q+1)^2(q-1)^2$, then $q^2+1\mid3^4$, this implies $q\not\in\mathbb{Z}$, which is a contradiction.

$\cod(G/N)=\cod(\U_{4}(2))$. We need only observe that the smallest codegrees for this case are the same. Thus $G/N \cong \U_{4}(2)$.
\end{proof}

\begin{lem}\label{redS4q}
Let $G$ be a finite group with $\cod(G)=\cod(\Sy_{4}(q))$, $q>4$ is even. If $N$ is a maximal normal subgroup of $G$, then $G/N \cong \Sy_{4}(q)$.
\end{lem}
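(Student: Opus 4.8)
The plan is to mirror the structure of Lemmas \ref{redS44} and \ref{redU42}. First I would observe that since $\cod(G)=\cod(\Sy_4(q))$, Remark \ref{perfectgroup} forces $G$ to be perfect, so any maximal normal subgroup $N$ gives a non-abelian simple quotient $G/N$. Since $\cod(G/N)\subseteq\cod(G)$, Lemma \ref{list} restricts $|\cod(G/N)|$ to lie in $\{4,5,\dots,13\}$, and I would handle each possible value separately, the goal being to rule out every simple group except $\Sy_4(q)$ itself. The key arithmetic input is the explicit codegree set from Table 1: the only nontrivial \emph{odd} codegree of $\Sy_4(q)$ is $(q^2+1)(q-1)^2(q+1)^2$, so in each case I would try to match the single odd codegree of the candidate family against this value and derive a contradiction.

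The cases $|\cod(G/N)|\in\{4,5,6,7,8,9,10,11\}$ proceed exactly as in the previous two lemmas. For $\PSL(2,2^f)$ I set $k^2-1=(q^2+1)(q-1)^2(q+1)^2$ and argue no integer $k$ solves this (the right side is not one less than a perfect square in general). For $\PSL(2,p^f)$ with $p$ odd I use the ``half of another codegree'' trick: $k(k-\epsilon(k))$ and its half both appear, and I check no element of $\cod(\Sy_4(q))$ is exactly half of another, using that $\cod(\Sy_4(q))$ has a well-understood $2$-adic structure. For the families $\PSL(3,q'),\PSU(3,q'),{}^2B_2,{}^2\G_2$ and the sporadic/small cases I equate the unique odd codegree of the candidate to $(q^2+1)(q-1)^2(q+1)^2$ and rule it out either by a prime-power/cube divisibility obstruction (no cube of a prime can divide the target for the odd-$q'$ subcases, and $3\nmid|{}^2B_2|$) or by noting a specific codegree of the candidate, such as $3^3\cdot 7$ for $\G_2(2)'$, is not a product of the primes dividing $|\Sy_4(q)|=q^4(q^2-1)(q^4-1)$.

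The genuinely new case is $|\cod(G/N)|=13$, where Lemma \ref{list}(j) leaves $G/N\cong\U_4(2)$ or $G/N\cong\Sy_4(q')$ for some even $q'=2^{f'}>4$. I would dispose of $\U_4(2)$ by a fixed-prime comparison: $\cod(\U_4(2))$ is supported only on the primes $2,3,5$, whereas $\cod(\Sy_4(q))$ contains codegrees divisible by primes dividing $q^2+1$ or $q^2-1$ that exceed $5$ once $q\ge 8$, so the two sets cannot coincide. For the surviving possibility $G/N\cong\Sy_4(q')$, I compare the unique odd codegrees: I must have $(q'^2+1)(q'-1)^2(q'+1)^2=(q^2+1)(q-1)^2(q+1)^2$. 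The main obstacle, and the step I expect to require real care, is showing this equality forces $q'=q$. I would argue this by treating $f(x)=(x^2+1)(x^2-1)^2=x^6-x^4-x^2+1$ as a strictly increasing function of $x$ for $x\ge 2$, so the map $q\mapsto(q^2+1)(q-1)^2(q+1)^2$ is injective on even prime powers $q>4$; hence $q'=q$ and $G/N\cong\Sy_4(q)$. A clean way to see injectivity is to note $f$ is strictly monotincreasing on $[2,\infty)$ since $f'(x)=6x^5-4x^3-2x>0$ there, which immediately gives the conclusion.

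Once $G/N\cong\Sy_4(q)$ is identified, I would close as in Lemma \ref{redS44}(i): the full codegree sets then match, so no further reduction is possible and the maximal normal quotient is exactly $\Sy_4(q)$, completing the proof.
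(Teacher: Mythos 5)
Your skeleton (perfectness via Remark \ref{perfectgroup}, simple quotient, case analysis on $|\cod(G/N)|\le 13$ through Lemma \ref{list}) is the paper's, and your handling of the genuinely new case $|\cod(G/N)|=13$ is sound: within that case $\cod(G/N)$ and $\cod(G)$ have the same cardinality, so the subset relation forces equality, and your prime-support argument against $\U_4(2)$ plus the strict monotonicity of $x\mapsto(x^2+1)(x^2-1)^2$ forcing $q'=q$ are both correct. This matches the paper in spirit; the paper rules out $\U_4(2)$ by equating the unique odd codegrees and identifies $q'=q$ by comparing the smallest codegree $q^4(q-1)^2$ instead of the odd one.

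The genuine gap is your claim that the cases $|\cod(G/N)|\in\{4,\dots,11\}$ proceed ``exactly as in the previous two lemmas.'' In Lemmas \ref{redS44} and \ref{redU42} the comparison target is a fixed integer ($3^2\cdot5^2\cdot17$, resp.\ $3^4\cdot5$), so checks such as ``no cube of a prime divides it'' are finite verifications. Here the target is the parametric number $(q^2+1)(q-1)^2(q+1)^2$, and your cube obstruction is simply false: $p^3\mid(q\pm1)^2$ whenever $p^2\mid q\pm1$, e.g.\ $q=8$ gives $q+1=3^2$ hence $3^3\mid(q+1)^2$, and $q=2^{10}$ gives $5^2 \mid q+1$ hence $5^3\mid(q+1)^2$. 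So in cases (e) and (f) the equations $q'^3(q'^2\pm q'+1)=(q^2+1)(q^2-1)^2$ with $q'$ odd cannot be dismissed this way; this is exactly why the paper abandons odd-codegree matching there and instead matches the divisibility structure of the two sets ($s^3(s+1)(s-1)^2$ is the unique codegree of $\PSL(3,s)$ divisible by three other codegrees, while only $q^4(q+1)(q-1)^2$ and $q^4(q+1)^2(q-1)$ play that role in $\cod(\Sy_4(q))$; matching these and their divisors yields the contradiction). Two further spots fail for related reasons: $\A_7$ and $\M_{22}$ have \emph{no} nontrivial odd codegree, so ``equate the unique odd codegree of the candidate'' has nothing to equate --- the paper handles them by comparing $2$-parts, a tool you never invoke; and your fallback for $\G_2(2)'$, that $3^3\cdot7$ is ``not a product of the primes dividing $|\Sy_4(q)|$,'' is wrong as stated since those primes vary with $q$ (both $3$ and $7$ divide $|\Sy_4(8)|$) --- the valid observation is that $3^3\cdot7$ is odd and smaller than the unique odd codegree $(q^2+1)(q^2-1)^2\ge 65\cdot63^2$.
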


\begin{proof}
Let $N$ be a maximal normal subgroup of $G$.
Similarly with above lemma, we have that $G/N$ is a non-abelian simple group and $|\cod(G/N)|$ is $4, 5, 6, 7, 8, 9, 10, 11, 12$, or $13$. Next we analyze the cases separately.

\item[(a)] $|\cod(G/N)|=4$. By Lemma $2.3,$ Then $G/N\cong \PSL(2,k)$ where $k=2^f\geq 4$ and $\cod(G/N)=\{1, k(k-1), k(k+1), k^2-1\}$. If $q$ is even, then $k^2-1=(q^2+1)(q-1)^2(q+1)^2$, as they are the only nontrivial odd codegree of each set. If $k+1\mid(q-1)^2(q+1)^2$, then $q^2+1\mid k-1$. Clearly $(q-1)^2(q+1)^2-(q^2+1)\ne2$, this implies that $k+1-(k-1)\ne2$, it is a contradiction. If $k-1\mid (q-1)^2(q+1)^2$, then $q^2+1\mid k+1$. Obviously $(q-1)^2(q+1)^2>q^2+1$, this implies $k-1>k+1$, a contradiction.

\item[(b)] $|\cod(G/N)|=5$. Then $G/N\cong \PSL(2,k)$ where $k$ is an odd prime power and $\cod(G/N)=\left\{ 1, \frac{k(k-1)}{2}, \frac{k(k+1)}{2}, \frac{k^2-1}{2}, k(k-\epsilon(k))\right\}$ where $\epsilon(k)=(-1)^{(k-1)/2}$. Then $k(k-\epsilon(k))/2$, $k(k-\epsilon(k))\in\cod(G)$, a contradiction for we can't find a codegree which is the half of another codegree in $|\cod(\Sy_{4}(q))|$.

\item[(c)] $|\cod(G/N)|=6$. If $G/N\cong {}^2B_2(q)$, $q$ is even, then $(s-1)(s^2+1)=(q^2+1)(q-1)^2(q+1)^2$ since these are the only nontrivial odd codegrees. If $s-1\mid (q-1)^2(q+1)^2$, then $q^2+1\mid s^2+1$. This implies $q$ is not an integer, a contradiction. If $s^2+1\mid (q-1)^2(q+1)^2$, then $q^2+1\mid s-1$, we have that $q$ is not an integer, a contradiction.

If $G/N\cong \PSL(3,4)$, then $\cod(G/N)=\{ 1,\ 2^4\cdot3^2\cdot7,\ 2^6\cdot3^2,\ 2^6\cdot5,\ 2^6\cdot7,\ 3^2\cdot5\cdot7 \}$. If $q$ is even, it can be easily checked that $3^2\cdot5\cdot7=(q^2+1)(q-1)^2(q+1)^2$. It is a contradiction for $q>4$.

\item[(d)]$|\cod(G/N)|=7$. Then $G/N\cong \PSL(3,3)$, $\A_{7}$, $\M_{11}$ or $\J_{1}$. We need only to observe that none of the codegree sets for this case are the same.
Suppose $G/N \cong \PSL(3,3)$. Note that $3^3\cdot13$ is the only nontrivial odd codegree. If $q$ is even, $3^3\cdot13=(q^2+1)(q-1)^2(q+1)^2$ is a contradiction for $q>4$.

Suppose $G/N \cong \A_{7}$. Since every nontrivial codegree in $\cod(\A(7))$ is even, then $\cod(\A(7))$ is the subset of $\cod(G)$ after deleting the unique nontrivial odd codegree. If $q$ is even, then by comparing the 2-parts of the codegrees of $\cod(\A(7))$ and $\cod(G)$, we have a contradiction since $q>4$.

Suppose $G/N \cong \M_{11}$. Then $G/N\cong \M_{11}$.
If $q$ is even, then $3^2\cdot5\cdot11=(q^2+1)(q-1)^2(q+1)^2$, as they are the only nontrivial odd codegree of each set. Note that $(3^2,5\cdot11)=1$. If $5\cdot11\mid (q-1)^2(q+1)^2$, then $q^2+1\mid 3^2$. It is a contradiction since $q \in\mathbb{Z}$. If $3^2\mid(q-1)^2(q+1)^2$, then $q^2+1\mid5\cdot11$. Obviously $(q-1)^2(q+1)^2>q^2+1$, this implies $3^2>5\cdot11$, a contradiction.

Suppose $G/N \cong \J_{1}$. Then $G/N\cong \J_{1}$. Since the only nontrivial odd codegree in $\cod \Sy_4(q))$ is $(q^2+1)(q-1)^2(q+1)^2$. We need only note that $\cod(\J_{1})$ has two nontrivial odd codegrees, it is a contradiction.

\item[(e)] $|\cod(G/N)|=8$.
If $G/N \cong \PSL(3,s)$ with $4<s\not\equiv 1\ (\bmod 3)$. We see that $s^3(s+1)(s-1)^2$ the only nontrivial codegree which is divided by other three codegrees in $\cod(\PSL(3,s))$. In $\cod(\Sy_{4}(q))$, there are two nontrivial codegrees in this case, which are $q^4(q+1)(q-1)^2$ and $q^4(q+1)^2(q-1)$. Then $s^3(s+1)(s-1)^2$ may be equal to $q^4(q+1)(q-1)^2$ or $q^4(q+1)^2(q-1)$. Furthermore $s^3(s-1)^2=q^4(q+1)(q-1)$ or $q^4(q+1)^2$, as $s^3(s-1)$ is the smallest codegree among three codegrees which divide same codegree in $\cod(\PSL(3,s))$ and $q^4(q+1)(q-1)$ or $q^4(q+1)^2$ satisfies similar condition in $\cod(G)$. This means $q\notin\mathbb{Z}$, a contradiction.

If $G/N\cong \PSU(3,s)$ with $4<s\not\equiv -1\ (\bmod\ 3)$. Similar with the analysis above paragraph, we see that $s^3(s+1)^2(s-1)=q^4(q+1)(q-1)^2$ or $q^4(q+1)^2(q-1)$. This means $q \notin\mathbb{Z}$, a contradiction.

If $G/N \cong \G_2(2)'$ we see $3^3\cdot7=(q^2+1)(q+1)^2(q-1)^2$, as they are the only nontrivial odd codegrees in each set. By setting this equality, we obtain no integer solution for $q$, a contradiction.

\item[(f)] $|\cod(G/N)|=9$.

Suppose $G/N \cong \PSL(3,s)$ with $4<s\equiv 1\ (\bmod\ 3)$. We have $\frac{1}{3}s^3(s+1)(s-1)^2=q^4(q+1)(q-1)$ or $q^4(q+1)^2$. This means $q \notin\mathbb{Z}$, a contradiction.

Suppose $G/N \cong \PSU(3,q))$ with $4<q\equiv -1\ (\bmod 3)$. We have $\frac{1}{3}s^3(s+1)^2(s-1)=q^4(q+1)(q-1)$ or $q^4(q+1)^2$. This means $q \notin\mathbb{Z}$, a contradiction.

\item[(g)] $|\cod(G/N)|=10$. Then $G/N \cong \M_{22}$. We compare the largest 2-part of each codegrees set and obtain that $q$ is not an integer, a contradiction.

\item[(h)] $|\cod(G/N)|=11$. If $G/N \cong \PSL(4,2)$, $\M_{12}$ or $\M_{23}$.

Suppose $G/N \cong \PSL(4,2)$. It is easily checked that $3^2\cdot5\cdot7$ is the only nontrivial odd codegree in $\cod(\PSL(4,2))$. Hence $3^2\cdot5\cdot7=(q^2+1)(q-1)^2(q+1)^2$. Note that $3^2\cdot5\cdot7=315$ is not divisible by a square of prime, a contradiction.

If $G/N \cong {}^2\G_2(s)$, $s=3^{2t+1}$. We have that $(q^2+1)(q-1)^2(q+1)^2$ equals to either $3^{5s+3}(s^2-s+1)$, $s^3(s+1-3m)$, or $s^3(s+1+3m)$. This is a contradiction since $q$ is not an integer.

\item[(i)] $|\cod(G/N)|=12$. Then $G/N \cong \Sy_{4}(4)$. By observation, $s^3(s+1)(s-1)^2$ is the only nontrivial codegree which is divided by other three codegrees in $\cod(\Sy_{4}(4))$. In $\cod( \Sy_{4}(q))$, there are two nontrivial codegrees in this case, which are $q^4(q+1)(q-1)^2$ and $q^4(q+1)^2(q-1)$. Then $s^3(s+1)(s-1)^2$ may be equal to $q^4(q+1)(q-1)^2$ or $q^4(q+1)^2(q-1)$. Furthermore $2^8\cdot5^2=q^4(q+1)(q-1)$ or $q^4(q+1)^2$, as $2^8\cdot5^2$ is the smallest codegree among three codegrees which divide same codegree in $\cod(\Sy_{4}(4))$ and $q^4(q+1)(q-1)$ or $q^4(q+1)^2$ satisfies similar condition in $\cod(G)$. This means a contradiction for $q>4$.

\item[(j)]$|\cod(G/N)|=13$. We need only observe that $3^4\cdot5=(q^2+1)(q+1)^2(q-1)^2$, as they are the only nontrivial odd codegrees of each set. If $3^4 \mid (q+1)^2(q-1)^2$, then $q^2+1 \mid 5$, this implies $q$ is not an integer, which is a contradiction. If $5\mid(q+1)^2(q-1)^2$, then $q^2+1\mid3^4$, this implies $q\not\in\mathbb{Z}$, which is a contradiction.

Thus $\cod(G/N)=\cod(\Sy_{4}(s))$ for $s=2^t>4$. Comparing the smallest codegrees, we see $q=s$. Thus, $G/N \cong \Sy_{4}(q)$.
\end{proof}

\begin{lem}\label{redU43}
Let $G$ be a finite group with $\cod(G)=\cod(\U_{4}(3))$. If $N$ is a maximal normal subgroup of $G$, then $G/N\cong \U_{4}(3)$.
\end{lem}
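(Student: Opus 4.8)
The plan is to follow the same reduction template used in Lemmas~\ref{redS44}--\ref{redS4q}. Let $N$ be a maximal normal subgroup of $G$. Since $\cod(G)=\cod(\U_4(3))$ contains no prime power (one checks each entry of the $\U_4(3)$ row in Table~1 is divisible by at least two distinct primes), Lemma~\ref{perfect} together with Remark~\ref{perfectgroup} forces $G$ to be perfect, so $G/N$ is a non-abelian simple group. Because $\cod(G/N)\subseteq\cod(G)$ and $|\cod(\U_4(3))|=14$, Lemma~\ref{list} restricts $|\cod(G/N)|$ to lie in $\{4,5,\dots,14\}$, and I would dispose of each value of $|\cod(G/N)|$ in turn, exactly as in the previous lemmas.

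The organizing tool throughout is the observation that $\cod(\U_4(3))$ has very few nontrivial \emph{odd} codegrees. Scanning the $\U_4(3)$ row, the codegrees not divisible by $2$ are $3^6{\cdot}7$ and $3^6{\cdot}5$. Since any subset $\cod(G/N)$ of $\cod(\U_4(3))$ must match the odd codegrees of the candidate simple group against $\{3^6{\cdot}7,\,3^6{\cdot}5\}$ (plus $1$), this sharply constrains each case. For the $\PSL(2,k)$ cases (a) and (b) I would use that the single or paired odd codegrees $k^2-1$, $\tfrac{k^2-1}{2}$, $k(k\mp1)$ cannot reconcile with $3^6{\cdot}7$ or $3^6{\cdot}5$ over the integers, and in case (b) that no element of $\cod(\U_4(3))$ is exactly half of another. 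For the infinite families ${}^2B_2(q)$, $\PSL(3,q)$, $\PSU(3,q)$ and ${}^2\G_2(q)$ in cases (c),(e),(f),(h), I would set the relevant defining polynomial in $q$ (e.g.\ $q^3(q^2-q+1)$, $(q^2+q+1)(q^2-1)(q-1)$, $3^{5f+3}(q^2-q+1)$, etc.) equal to $3^6{\cdot}7$ or $3^6{\cdot}5$ and derive a contradiction either from an impossible prime-power factorization (no cube of a prime divides $3^6{\cdot}7$ beyond $3^3$, so a factor $q^3$ forces $q=3$, which one rules out directly), from divisibility obstructions (the Suzuki order is prime to $3$), or from the absence of an integer root. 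For the sporadic and small cases $\A_7,\M_{11},\M_{22},\M_{12},\M_{23},\PSL(3,4),\PSL(4,2),\G_2(2)'$ (cases (c),(d),(e),(g),(h)), I would simply exhibit one codegree of the candidate not lying in $\cod(\U_4(3))$, reading both rows off Table~1.

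The critical case is (k), where $|\cod(G/N)|=14$ and Lemma~\ref{list} leaves three candidates: $\U_4(3)$ itself, ${}^2\F_4(2)'$, and $\J_3$. Here a single odd-codegree count no longer suffices, so I would separate $\U_4(3)$ from the other two by finding a concrete witness codegree. Comparing the ${}^2\F_4(2)'$ row, the codegree $2^{11}{\cdot}5^2{\cdot}13$ (indeed any codegree divisible by $13$) does not appear in $\cod(\U_4(3))$, whose primes are only $\{2,3,5,7\}$; similarly the $\J_3$ row contains codegrees divisible by $17$ and $19$, again absent from $\cod(\U_4(3))$. Hence $\cod({}^2\F_4(2)')\not\subseteq\cod(\U_4(3))$ and $\cod(\J_3)\not\subseteq\cod(\U_4(3))$, leaving $\cod(G/N)=\cod(\U_4(3))$ and therefore $G/N\cong\U_4(3)$. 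I expect the main obstacle to be purely bookkeeping: verifying the several Diophantine non-solvability claims in cases (e),(f),(h) cleanly (particularly the even-$q$ subcases, where the factor $q^3$ is absent and one must argue directly that the resulting quartic or cubic in $q$ has no admissible integer root), rather than any conceptual difficulty, since the prime-set disjointness in case (k) makes the decisive step essentially immediate.
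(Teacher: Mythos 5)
Your overall strategy is the same as the paper's: reduce to $G/N$ non-abelian simple via perfectness, run through $|\cod(G/N)|\in\{4,\dots,14\}$ using Lemma~\ref{list}, match odd codegrees against $\{3^6{\cdot}7,\,3^6{\cdot}5\}$ for the infinite families, exhibit witness codegrees for the sporadic groups, and in case (k) separate $\U_4(3)$ from ${}^2\F_4(2)'$ and $\J_3$ by codegrees involving primes ($13$, respectively $17$) that do not divide $|\U_4(3)|$ --- exactly the paper's witnesses $3^3{\cdot}5^2{\cdot}13$ and $3^5{\cdot}5{\cdot}17$. However, your case (b) rests on a false claim: it is \emph{not} true that no element of $\cod(\U_4(3))$ is exactly half of another. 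Reading Table~1, $\cod(\U_4(3))$ contains the halving pairs $(2^3{\cdot}3^6,\,2^4{\cdot}3^6)$, $(2^4{\cdot}3^6,\,2^5{\cdot}3^6)$ and $(2^5{\cdot}3^5,\,2^6{\cdot}3^5)$, and the paper's own proof of this case is built on precisely that observation: since $k(k-\epsilon(k))/2$ and $k(k-\epsilon(k))$ both lie in $\cod(G)$, they must form one of these pairs, and the contradiction comes only afterwards, from the nonexistence of an odd prime power $k$ fitting any of them. As written, your argument dismisses $\PSL(2,k)$, $k$ odd, for a reason that is simply wrong; the repair is to solve $k(k-\epsilon(k))\in\{2^4{\cdot}3^6,\,2^5{\cdot}3^6,\,2^6{\cdot}3^5\}$: since $k$ is odd it must be a power of $3$, indeed the full $3$-part $3^6$ or $3^5$ of the right-hand side, and then $k\pm1$ would have to equal $2^4$, $2^5$ or $2^6$, which fails in every case.

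A smaller slip occurs in case (e): you assert that a factor $q^3$ dividing $3^6{\cdot}7$ (or $3^6{\cdot}5$) forces $q=3$. In fact $3^6=9^3$, so $q=9$ also satisfies the divisibility constraint, and $q=9$ is a legitimate prime power for the $\PSL(3,q)$ and $\PSU(3,q)$ families; it has to be excluded by direct computation, e.g.\ $9^3(9^2+9+1)=3^6{\cdot}7{\cdot}13$ and $9^3(9^2-9+1)=3^6{\cdot}73$, neither of which equals $3^6{\cdot}7$ or $3^6{\cdot}5$. With these two repairs your outline becomes a correct proof along the same lines as the paper's.
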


\begin{proof}
Let $N$ be a maximal normal subgroup of $G$. Then $G/N$ is a non-abelian simple group and $|\cod(G/N)|\leq 14$.

\item[(a)]$|\cod(G/N)|=4$. Then $G/N \cong \PSL(2,k)$ where $k=2^f\geq 4$, and $\cod(G/N)=\{1, k(k-1), k(k+1), k^2-1\}$. Thus $k^2-1=3^6\cdot7$ or $3^6\cdot5$, since these are nontrivial odd codegrees. It is a contradiction since $k\in\Z$.

\item[(b)]$|\cod(G/N)|=5$. Then $G/N\cong \PSL(2,k)$ where $k$ is an odd prime power and $\cod(G/N)=\left\{ 1, \frac{k(k-1)}{2}, \frac{k(k+1)}{2}, \frac{k^2-1}{2}, k(k-\epsilon(k))\right\}$ where $\epsilon(k)=(-1)^{(k-1)/2}$. So $k(k-\epsilon(k))/2$, $k(k-\epsilon(k))\in \cod(G)$. This implies that $\frac{k(k\pm 1)}{2}=2^5\cdot3^6$ or $\frac{k(k\pm 1)}{2}=2^4\cdot3^6$ since $2^4\cdot3^6$ is half of $2^5\cdot3^6$, and $2^3\cdot3^6$ is half of $2^4\cdot3^6$ in $\cod(G)$. In either case, $k\in \Z$ which is a contradiction.

\item[(c)] $|\cod(G/N)|=6$. By observation, $\cod(\PSL(3,4))\not\subset \cod(\U_{4}(3))$ since $3^2\cdot5\cdot7 \notin \cod(G)$.

Suppose $\cod(G/N)=\cod({}^2B_2(q))$. We note that $(q-1)(q^2+1)=3^6\cdot7$ or $3^6\cdot5$, since these are nontrivial odd codegrees. We note that $3$ does not divide the order of the Suzuki groups, a contradiction.

\item[(d)] Suppose $|\cod(G/N)|=7$. We note that the nontrivial odd codegree in codegree sets of $\J_1, \M_{11}$, and $\PSL(3,3)$ are $3\cdot5\cdot11\cdot19$, $3^2\cdot5\cdot 11$ and $3^3\cdot 13$, respectively. Also, $\cod(\A_7) \not\subset \cod(\U_{4}(3))$ by an easy observation.

\item[(e)]$|\cod(G/N)|=8$.

If $G/N\cong \PSU(3,q)$ with $4<q\not\equiv -1\ (\bmod\ 3)$ then if $q$ is odd we have $q^3(q^2-q+1)=3^6\cdot7$ or $3^6\cdot5$.This equation means a contradiction since $q \in \Z$. If $q$ is even we have $(q^2-q+1)(q+1)^2(q-1)=3^6\cdot7$ or $3^6\cdot5$. Then we have a same contradiction since $q \in \Z$.

If $G/N \cong \PSL(3,q)$ with $4<q\not\equiv 1\ (\bmod 3)$ and $q$ is odd, then $q^3(q^2+q+1)=3^6\cdot7$ or $3^6\cdot5$. We have the same contradiction with $q \in \Z$. If $q$ is even, then $(q^2+q+1)(q^2-1)(q-1)=3^6\cdot7$ or $3^6\cdot5$. Then it is a contradiction since Then we have a same contradiction since $q \in \Z$.

If $G/N \cong \G_2(2)'$ we see $\cod(\G_2(2)')\not\subseteq\cod(\U_{4}(3))$ since $3^3\cdot7 \notin \cod(G)$.

\item[(f)] $|\cod(G/N)|=9$.

Suppose $G/N \cong \PSL(3,q)$ with $4<q\equiv 1\ (\bmod\ 3)$. If $q$ is odd, then $\frac{1}{3}q^3(q^2+q+1)=3^6\cdot7$ or $3^6\cdot5$. It is a contradiction since $q\in\Z$. If $q$ is even, then $\frac{1}{3}(q^2+q+1)(q+1)(q-1)^2=3^6\cdot7$ or $3^6\cdot5$. This means $q\notin\Z$, a contradiction.

Suppose $G/N\cong\PSU(3,q))$ with $4<q\equiv -1\ (\bmod 3)$.
If $q$ is odd, then $\frac{1}{3}q^3(q^2-q+1)=3^6\cdot7$, it is a contradiction since $q\in\Z$. If $q$ is even, then $\frac{1}{3}(q^2-q+1)(q+1)^2(q-1)=3^6\cdot7$ or $3^6\cdot5$, which means $q$ is not an integer, it is a contradiction.

\item[(g)] $|\cod(G/N)|=10$. We note that $\cod(\M_{22})\not\subseteq\cod(\U_{4}(3))$ since $2^7\cdot3\cdot5\cdot11 \notin \cod(G)$.

\item[(h)] $|\cod(G/N)|=11$. We can checked that the codegree sets of $\PSL(4,2), \M_{12}$ and $\M_{23}$ contain $3^2 \cdot 5 \cdot 7$, $2^2 \cdot 3 \cdot 5 \cdot 11$ and $3^2 \cdot5\cdot11 \cdot 23$ respectively, none of which are elements of the codegree set of $\U_{4}(3)$.  We note that $G/N$ cannot be isomorphic to $\PSL(4,2)$, $\M_{12}$ or $\M_{23}$. It is a contradiction.

If $G/N \cong {}^2\G_2(3^{2f+1})$, then by comparing the codegree sets, we have that $3^6\cdot7$ or $3^6\cdot5$ can equal to $3^{5f+3}(q^2-q+1)$, $q^3(q+1-3m)$ or $q^3(q+1+3m)$, since they are the nontrivial odd codegrees in each set, it is a contradiction since $q\ge27$.

\item[(i)] $|\cod(G/N)|=12$. Then $G/N\cong \Sy_{4}(4)$. We note that $3^2\cdot5^2\cdot17\notin\cod(G)$. Hence $\cod(G/N)\not\subseteq\cod(G)$, it is a contradiction.

\item[(j)] $|\cod(G/N)|=13$. Then $\cod(G/N)=\cod(\U_{4}(2))$ or $\cod(\Sy_{4}(q))$, for $q=2^f>4$.

Suppose $\cod(G/N)=\cod(\U_{4}(2))$. We need only observe that $3^4\cdot5\notin\cod(G)$ which means $\cod(G/N)\not\subseteq\cod(G)$. It is a contradiction.

Suppose $\cod(G/N)=\cod(\Sy_{4}(q))$, for $q=2^f>4$. Note that $(q^2+1)(q+1)^2(q-1)^2=3^6\cdot7$ or $3^6\cdot5$, since they are nontrivial odd codegrees in each set. This has no integer solution. A contradiction.

\item[(k)]$|\cod(G/N)|=14$. It is easily checked that neither $3^3\cdot5^2\cdot13$ nor $3^5\cdot5\cdot17$ is in $\cod(G)$. Thus $\cod({^{2}\F_{4}(2){}}')\not\subseteq\cod(G)$ and $\cod(\J_{3})\not\subseteq\cod(G)$.

We have that $G/N\cong \U_{4}(3)$.
\end{proof}

\begin{lem}\label{redF42}
Let $G$ be a finite group with $\cod(G)=\cod(^{2}\F_{4}(2){}')$. If $N$ is a maximal normal subgroup, then $G/N \cong {^{2}\F_{4}(2){}}'$.
\end{lem}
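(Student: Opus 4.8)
The plan is to reproduce the reductive scheme already carried out in Lemmas \ref{redS44}--\ref{redU43}. Since $\cod(G)=\cod({}^2\F_4(2)')$ and ${}^2\F_4(2)'$ is a non-abelian simple group, $G$ is perfect by Remark \ref{perfectgroup}; hence for a maximal normal subgroup $N$ the quotient $G/N$ is a non-abelian simple group, so that $\cod(G/N)\subseteq\cod(G)$. Because $|\cod({}^2\F_4(2)')|=14$, Lemma \ref{list} forces $|\cod(G/N)|\in\{4,5,\dots,14\}$ and supplies an explicit list of candidate simple groups in each cardinality. I would then eliminate every candidate except ${}^2\F_4(2)'$ itself.

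The uniform discriminator is that $\cod({}^2\F_4(2)')$ has exactly one nontrivial odd codegree, namely $3^3\cdot5^2\cdot13$. For $\PSL(2,2^f)$ and $\PSL(2,p^f)$ (cases $|\cod(G/N)|=4,5$) I would equate the relevant odd codegree to $3^3\cdot5^2\cdot13$ and derive a non-integer $k$, and in the odd case invoke the ``no codegree is half of another'' observation. For ${}^2B_2(q)$ I would use that $3\nmid|{}^2B_2(q)|$, so its nontrivial odd codegree $(q-1)(q^2+1)$ cannot equal $3^3\cdot5^2\cdot13$. For the parametric families $\PSL(3,q)$, $\PSU(3,q)$ (cases $8,9$) and ${}^2\G_2(3^{2f+1})$ (inside case $11$) I would match the distinguished largest codegree together with a second one and show the resulting system forces $q\notin\Z$, exactly as in Lemma \ref{redU43}. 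The finitely many remaining candidates $\M_{11},\M_{12},\M_{22},\M_{23},\J_1,\A_7,\PSL(3,3),\PSL(3,4),\PSL(4,2),\G_2(2)'$, $\Sy_4(4)$ and $\U_4(2)$ are each dispatched by exhibiting one element of their codegree set lying outside $\cod({}^2\F_4(2)')$; for instance $3^2\cdot5\cdot7\notin\cod({}^2\F_4(2)')$ kills $\PSL(3,4)$ and $\PSL(4,2)$, while $3^2\cdot5^2\cdot17\notin\cod({}^2\F_4(2)')$ kills $\Sy_4(4)$.

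For the top case $|\cod(G/N)|=14$, Lemma \ref{list}(k) gives $G/N\cong\U_4(3)$, ${}^2\F_4(2)'$, or $\J_3$. The groups $\U_4(3)$ and $\J_3$ each carry two nontrivial odd codegrees ($3^6\cdot7,\,3^6\cdot5$ for $\U_4(3)$, and $3^4\cdot17\cdot19,\,3^5\cdot5\cdot17$ for $\J_3$), none equal to $3^3\cdot5^2\cdot13$, so neither codegree set can be contained in $\cod(G)$. This isolates $G/N\cong{}^2\F_4(2)'$, and comparing the two sets directly yields $\cod(G/N)=\cod(G)$, completing the argument.

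The step I expect to be the genuine obstacle is the subcase of $|\cod(G/N)|=13$ where $G/N\cong\Sy_4(q)$ with $q=2^f>4$: here $q$ ranges over infinitely many values, so enumeration is unavailable and one must argue parametrically. The unique nontrivial odd codegree of $\Sy_4(q)$ is $(q^2+1)(q-1)^2(q+1)^2$, which would have to equal $3^3\cdot5^2\cdot13$; a coprimality and size analysis pitting the factor $q^2+1$ against $(q\pm1)^2$, in the style of Lemma \ref{redS4q}(j), shows this is impossible for $q>4$. The analogous parametric matching for $\PSL(3,q)$ and $\PSU(3,q)$ is the other delicate point, but both reduce to showing that a polynomial identity in $q$ admits no admissible integer solution.
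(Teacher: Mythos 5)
Your proposal follows essentially the same route as the paper's proof: perfectness via Remark \ref{perfectgroup}, reduction to a non-abelian simple quotient, enumeration of the possibilities for $|\cod(G/N)|$ via Lemma \ref{list}, and elimination of every candidate by codegree comparison, with the unique nontrivial odd codegree $3^3\cdot5^2\cdot13$ as the principal discriminator. Your handling of the Suzuki groups (via $3\nmid|{}^2B_2(q)|$), of the sporadic and small candidates (one excluded codegree each), of $\Sy_4(q)$, and of the top case ($\U_4(3)$ and $\J_3$ via their odd codegrees $3^6\cdot7,\,3^6\cdot5$ and $3^4\cdot17\cdot19,\,3^5\cdot5\cdot17$) coincides with the paper's.

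One assertion, however, is false and should be struck: in the case $|\cod(G/N)|=5$ you propose to invoke the ``no codegree is half of another'' observation. That observation fails for ${}^2\F_4(2)'$: the pairs $(2^{10}\cdot3^3,\,2^{11}\cdot3^3)$, $(2^9\cdot3^3,\,2^{10}\cdot3^3)$ and $(2^{10}\cdot5^2,\,2^{11}\cdot5^2)$ in $\cod({}^2\F_4(2)')$ each consist of an element and its double. Indeed, the paper's own treatment of this case exploits precisely these pairs: since $k(k-\epsilon(k))/2$ and $k(k-\epsilon(k))$ both lie in $\cod(G)$, the smaller one must be $2^{10}\cdot3^3$, $2^9\cdot3^3$ or $2^{10}\cdot5^2$, and each choice admits no integer $k$. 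Your slip is harmless only because the first argument you give for this case already suffices: for $k$ an odd prime power the unique nontrivial odd element of $\cod(\PSL(2,k))$ is $k(k\pm1)/2$, and equating it to $3^3\cdot5^2\cdot13$ gives $k(k\pm1)=2\cdot3^3\cdot5^2\cdot13=17550$, which has no integer solution since $1+4\cdot17550=70201$ is not a perfect square. With that redundant (and false) aside deleted, your outline is correct and matches the paper's proof.
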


\begin{proof}
Let $N$ be a maximal normal subgroup of $G$. Then $G/N$ is a non-abelian simple group and $|\cod(G/N)|$ is either $4, 5, 6, 7, 8, 9, 10, 11, 12, 13$ or $14$.

\item[(a)] $|\cod(G/N)|=4$. Then $G/N\cong\PSL(2,k)$ where $k=2^f\geq 4$, and $\cod (G/N)=\{1, k(k-1), k(k+1), k^2-1\}$. Thus $k^2-1=3^3\cdot5^2\cdot13$ since these are the only nontrivial odd codegrees. It is a contradiction since $k\in\Z$.

\item[(b)] $|\cod(G/N)|=5$. Then $G/N\cong\PSL(2,k)$ where $k$ is an odd prime power and $\cod(G/N)=\left\{ 1, \frac{k(k-1)}{2}, \frac{k(k+1)}{2}, \frac{k^2-1}{2}, k(k-\epsilon(k))\right\}$ where $\epsilon(k)=(-1)^{(k-1)/2}$. So $k(k-\epsilon(k))/2$, $k(k-\epsilon(k))\in\cod(G)$. This implies that $\frac{k(k\pm 1)}{2}=2^{10}\cdot3^3$, $2^{9}\cdot3^3$, or $\frac{k(k\pm 1)}{2}=2^{10}\cdot5^2$ since they are half of another codegree respectively. In either case, $k\not\in\Z$ which is a contradiction.

\item[(c)] $|\cod(G/N)|=6$. By observation, $\cod(\PSL(3,4))\not\subset\cod(^{2}\F_{4}(2){}')$ since $3^2\cdot5\cdot7\notin\cod(G)$.

Suppose $\cod(G/N)=\cod({}^2B_2(q))$. We note that $(q-1)(q^2+1)=3^3\cdot5^2\cdot13$ since these are the only nontrivial odd codegrees. We note that $3$ does not divide the order of the Suzuki groups, a contradiction.

\item[(d)] $|\cod(G/N)|=7$. Then $G/N\cong \PSL(3,3)$, $\A_{7}$, $\M_{11}$ or $\J_{1}$. We note that the nontrivial odd codegree in codegree sets of $\J_1, \M_{11}$, $\A_7$ and $\PSL(3,3)$ are $3\cdot5\cdot11\cdot19$, $3^2\cdot5\cdot 11$, $2^2\cdot3\cdot5\cdot7$ and $3^3\cdot 13$, respectively. It is a contradiction.

\item[(e)] $|\cod(G/N)|=8$.

If $G/N\cong \PSU(3,q)$ with $4<q\not\equiv -1\ (\bmod\ 3)$ then if $q$ is odd we have $q^3(q^2-q+1)=3^3\cdot5^2\cdot13$. Thus it does not yield an integer root for $q$, a contradiction. If $q$ is even we have $(q^2-q+1)(q+1)^2(q-1)=3^3\cdot5^2\cdot13$. This also does not yield an integer root for $q$. This is a contradiction.

If $G/N\cong\PSL(3,q)$ with $4<q\not\equiv 1\ (\bmod\ 3)$ and $q$ is odd, then $q^3(q^2+q+1)=3^3\cdot5^2\cdot13$. We have the same contradiction with $q^3$. If $q$ is even, then $(q^2+q+1)(q^2-1)(q-1)=3^3\cdot5^2\cdot13$. It is a contradiction since $q \in \Z$.

If $G/N\cong \G_2(2)'$ we see $\cod(\G_2(2)')\not\subseteq\cod({^{2}\F_{4}(2){}}')$ since $3^3\cdot7\notin\cod(G)$.

\item[(f)] $|\cod(G/N)|=9$. Suppose $G/N\cong \PSL(3,q)$ with $4<q\equiv 1\ (\bmod\ 3)$. If $q$ is odd, then $\frac{1}{3}q^3(q^2+q+1)=3^3\cdot5^2\cdot13$. It is a contradiction since $q$ is not an integer. If $q$ is even, then $\frac{1}{3}(q^2+q+1)(q+1)(q-1)^2=3^3\cdot5^2\cdot13$. It is a same contradiction since $q$ is an integer.

Suppose $G/N\cong\PSU(3,q))$ with $4<q\equiv -1\ (\bmod3)$. If $q$ is odd, then $\frac{1}{3}q^3(q^2-q+1)=3^3\cdot5^2\cdot13$, it is a contradiction since $q\in\Z$. If $q$ is even, then $\frac{1}{3}(q^2-q+1)(q+1)^2(q-1)=3^3\cdot5^2\cdot13$. It is a contradiction since $q$ in not an integer.

\item[(g)] $|\cod(G/N)|=10$. We note that $\cod(\M_{22})\not\subseteq\cod({^{2}\F_{4}(2){}}')$ for $2^7\cdot3\cdot5\cdot11 \notin \cod(G)$. A contradiction.

\item[(h)] $|\cod(G/N)|=11$. By observation, the codegree sets of $\PSL(4,2), \M_{12}$ and $\M_{23}$ contain $3^2 \cdot 5 \cdot 7$, $2^2 \cdot 3 \cdot 5 \cdot 11$ and $3^2 \cdot5\cdot11 \cdot 23$ respectively, none of which are elements of the codegree set of ${^{2}\F_{4}(2) {}}'$. It is easily checked that $\cod(G/N)\not\subseteq\cod(G)$. We note by observation that $G/N$ cannot be isomorphic to $\PSL(4,2)$, $\M_{12}$ or $\M_{23}$.

If $G/N\cong {}^2\G_2(3^{2f+1})$, then by comparing the nontrivial odd codegree in each set, we have a contradiction since $q\ge27$.

\item[(i)] $|\cod(G/N)|=12$. Then $G/N\cong \Sy_{4}(4)$. We note that $3^2\cdot5^2\cdot17\notin\cod(G)$. Hence $\cod(G/N)\not\subseteq\cod(G)$, it is a contradiction.

\item[(j)] $|\cod(G/N)|=13$. Then $\cod(G/N)=\cod(\U_{4}(2))$ or $\Sy_{4}(q)$, for $q=2^f>4$.

Suppose $\cod(G/N)=\cod(\U_{4}(2))$. We need only observe that $3^4\cdot5\notin\cod(G)$ which means $\cod(G/N)\not\subseteq\cod(G)$. It is a contradiction.

Suppose $\cod(G/N)=\cod(\Sy_{4}(q))$, for $q=2^f>4$. We have that $(q^2+1)(q+1)^2(q-1)^2=3^3\cdot5^2\cdot13$, as they are the only nontrivial odd codegrees in each set. It implies $q\notin\Z$, a contradiction.

\item[(k)] $|\cod(G/N)|=14$. It is easily checked that neither $3^6\cdot7$ nor $3^5\cdot5\cdot17$ is in $\cod(G)$. Thus $\cod(\U_{4}(3))\not\subseteq\cod(G)$ nor $\cod(\J_{3})\not\subseteq \cod(G)$. We have that $G/N\cong {^{2}\F_{4}(2) {}}'$.
\end{proof}

\begin{lem}\label{redJ3}
Let $G$ be a finite group with $\cod(G)=\cod(\J_{3})$. If $N$ is a maximal normal subgroup of $G$, then $G/N\cong \J_{3}$.
\end{lem}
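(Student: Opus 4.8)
The plan is to carry the argument through exactly as in Lemmas \ref{redS44}--\ref{redF42}. Since $\cod(G)=\cod(\J_3)$ and $\J_3$ is non-abelian simple, Remark \ref{perfectgroup} shows that $G$ is perfect, so for the maximal normal subgroup $N$ the quotient $G/N$ is non-abelian simple. Because $\cod(G/N)\subseteq\cod(G)$ and $|\cod(\J_3)|=14$, Lemma \ref{list} forces $|\cod(G/N)|\in\{4,5,\dots,14\}$, and I would eliminate every candidate in that range except $\J_3$ itself, reading the codegree data off Table 1.

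Throughout, the decisive data are the two nontrivial odd codegrees of $\J_3$, namely $3^4\cdot17\cdot19$ and $3^5\cdot5\cdot17$, together with the fact that the only primes dividing any element of $\cod(\J_3)$ are $2,3,5,17,19$ (indeed $|\J_3|=2^7\cdot3^5\cdot5\cdot17\cdot19$). For the family $\PSL(2,2^f)$ (case $|\cod|=4$) the unique nontrivial odd codegree $k^2-1$ would have to be $3^4\cdot17\cdot19$ or $3^5\cdot5\cdot17$; since neither of these is one less than a perfect square, $k\notin\Z$. For $\PSL(2,p^f)$ with $p$ odd (case $|\cod|=5$) I would use that $\cod(G/N)$ contains the pair $k(k-\epsilon(k))/2$ and $k(k-\epsilon(k))$; the only pair in $\cod(\J_3)$ in which one codegree is half of another is $(2^6\cdot3^5\cdot5,\,2^7\cdot3^5\cdot5)$, and setting $k(k\pm1)/2=2^6\cdot3^5\cdot5$ again yields no integer $k$. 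For the remaining groups of Lie type that occur — ${}^2B_2(q)$, $\PSL(3,q)$, $\PSU(3,q)$, ${}^2\G_2(q)$ and $\Sy_4(q)$ — I would match their distinguished odd codegrees against $3^4\cdot17\cdot19$ or $3^5\cdot5\cdot17$ and derive non-integrality of $q$, using in addition that $3\nmid|{}^2B_2(q)|$ and that $q>4$ (respectively $q\ge27$) makes the relevant codegrees too large or of the wrong form. Each of the sporadic or small candidates remaining in cases $|\cod|=6,\dots,13$ — $\PSL(3,4)$, $\A_7$, $\M_{11}$, $\PSL(3,3)$, $\J_1$, $\G_2(2)'$, $\M_{22}$, $\PSL(4,2)$, $\M_{12}$, $\M_{23}$, $\Sy_4(4)$ and $\U_4(2)$ — carries in its codegree set either an element divisible by one of $7,11,13,23$, or an odd codegree such as $3^2\cdot5^2\cdot17$ (for $\Sy_4(4)$) or $3^4\cdot5$ (for $\U_4(2)$) that is absent from $\cod(\J_3)$; in every instance $\cod(G/N)\not\subseteq\cod(\J_3)$, a contradiction.

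I expect the one genuinely delicate step to be the terminal case $|\cod(G/N)|=14$, where Lemma \ref{list} still permits $\U_4(3)$ and ${}^2\F_4(2)'$ alongside $\J_3$. To separate these I would once more use the prime support: $3^6\cdot7\in\cod(\U_4(3))$ but $7$ divides no element of $\cod(\J_3)$, while $3^3\cdot5^2\cdot13\in\cod({}^2\F_4(2)')$ but $13$ divides no element of $\cod(\J_3)$. Hence neither $\cod(\U_4(3))$ nor $\cod({}^2\F_4(2)')$ is contained in $\cod(\J_3)$, which leaves $\cod(G/N)=\cod(\J_3)$ and therefore $G/N\cong\J_3$. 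The only point demanding care is the clean verification, from Table 1, that $7,11,13,23$ really divide no codegree of $\J_3$; once this is recorded, all of the eliminations above become immediate.
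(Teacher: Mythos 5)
Your proposal is correct and follows essentially the same route as the paper's proof: perfectness of $G$ via Remark \ref{perfectgroup}, the case analysis $|\cod(G/N)|\in\{4,\dots,14\}$ from Lemma \ref{list}, elimination of each candidate by matching the two nontrivial odd codegrees $3^4\cdot17\cdot19$ and $3^5\cdot5\cdot17$ (or the unique half-pair $2^6\cdot3^5\cdot5$, $2^7\cdot3^5\cdot5$) and by prime-support obstructions, ending with the exclusion of $\U_4(3)$ and ${}^2\F_4(2)'$ via $3^6\cdot7$ and $3^3\cdot5^2\cdot13$. Your systematic use of the fact that only $2,3,5,17,19$ divide elements of $\cod(\J_3)$ is a slightly cleaner packaging of the paper's element-by-element checks, but the substance is identical.
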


\begin{proof}
Let $N$ be a maximal normal subgroup of $G$. Then $G/N$ is a non-abelian simple group and $|\cod(G/N)|$ is either $4, 5, 6, 7, 8, 9, 10, 11, 12, 13$ or $14$.

\item[(a)] $|\cod(G/N)|=4$. Then, $G/N\cong \PSL(2,k)$ where $k=2^f\geq4$ and $\cod(G/N)=\{1, k(k-1), k(k+1), k^2-1\}$. Hence $k^2-1=3^4\cdot 17\cdot19$ or $3^5\cdot5\cdot17$ since these are the nontrivial odd codegrees. In either case $k$ is not an integer which is a contradiction.

\item[(b)] $|\cod(G/N)|=5$. Then $G/N\cong \PSL(2,k)$ where $k$ is an odd prime power and $\cod(G/N)=\left\{ 1, \frac{k(k-1)}{2}, \frac{k(k+1)}{2}, \frac{k^2-1}{2}, k(k-\epsilon(k))\right\}$ where $\epsilon(k)=(-1)^{(k-1)/2}$. Then $k(k-\epsilon(k))/2$, $k(k-\epsilon(k))\in \cod(G)$, and this implies that $\frac{k(k\pm1)}{2}=2^6\cdot3^5\cdot5$ since $2^6\cdot 3^5\cdot5$ is the only element of $\cod(G)$ that is half of $2^7\cdot3^5\cdot5$. In either case, $k\not\in \Z$ which is a contradiction.

\item[(c)] $|\cod(G/N)|=6$. By observation, $\cod(\PSL(3,4))\not\subset \cod(\J_{3})$ for $3^2\cdot5\cdot7 \notin \cod(G)$.

Suppose $\cod(G/N)=\cod({}^2B_2(q))$. We note that $(q-1)(q^2+1)=3^4\cdot17\cdot19$ or $3^5\cdot5\cdot17$ since these are the nontrivial odd codegrees. We note that $3$ does not divide the order of the Suzuki groups, a contradiction.

\item[(d)] $|\cod(G/N)|=7$. We note that $ \M_{11}$, and $\PSL(3,3)$ can't be $3^2\cdot5\cdot11\cdot23$.
That the nontrivial odd codegree $3^2\cdot5\cdot11$, and $3^3\cdot13$ in the codegree set $\cod(\M_{11})$ and $\cod(\PSL(3,3))$ is not in $\cod(G)$. Also, $\cod(\A_{7})\not\subseteq\cod(G)$ and $\cod(\J_{1})\not\subseteq\cod(G)$ by an easy observation since $2^2\cdot3\cdot5\cdot7$ nor $3\cdot5\cdot11\cdot19 \notin \cod(G)$.

\item[(e)] $|\cod(G/N)|=8$.

If $G/N\cong \PSU(3,q)$ with $4<q\not\equiv -1\ (\bmod\ 3)$ then if $q$ is odd we have $q^3(q^2-q+1)=3^5\cdot5\cdot17$ or $3^4\cdot17\cdot19$. Since no cube of a prime divides $3^5\cdot5\cdot17$ or $3^4\cdot17\cdot19$, we have a contradiction. If $q$ is even we have $(q^2-q+1)(q+1)^2(q-1)=3^5\cdot5\cdot17$, then $(q+1)^2=3^5$. This is a contradiction since $q \in \Z$. Or $(q^2-q+1)(q+1)^2(q-1)=3^4\cdot17\cdot19$. It is a contradiction since $q$ is not an integer.

If $G/N \cong \PSL(3,q)$ with $4<q\not\equiv 1\ (\bmod 3)$ and $q$ is odd, then $q^3(q^2+q+1)=3^5\cdot5\cdot17$ or $3^4\cdot17\cdot19$. We have the same contradiction with $q$. If $q$ is even, then $(q^2+q+1)(q^2-1)(q-1)=3^5\cdot5\cdot17$, which means $q\notin \Z$, a contradiction. Or $(q^2+q+1)(q^2-1)(q-1)=3^4\cdot17\cdot19$. We can get the same contradiction as above.

If $G/N \cong \G_2(2)'$ we see $\cod(\G_2(2)')\not\subseteq\cod(\J_{3})$ since $3^3\cdot7 \notin \cod(G)$. This is a contradiction.

\item[(f)] $|\cod(G/N)|=9$.

Suppose $G/N \cong \PSL(3,q)$ with $4<q\equiv 1\ (\bmod\ 3)$. If $q$ is odd, then $\frac{1}{3}q^3(q^2+q+1)=3^5\cdot5\cdot17$. This does not an integer solution for $q$. It is a contradiction since. Or $\frac{1}{3}q^3(q^2+q+1)=3^4\cdot17\cdot19$. It is a same contradiction since $q\in \Z$. If $q$ is even. Then $\frac{1}{3}(q^2+q+1)(q+1)(q-1)^2=3^5\cdot5\cdot17$. It is a contradiction since $q \in \Z$. Or $frac{1}{3}(q^2+q+1)(q+1)(q-1)^2=3^4\cdot17\cdot19$, it is a contradiction since $q\in \Z$.

Suppose $G/N\cong \PSU(3,q))$ with $4<q \equiv-1\ (\bmod3)$. We have the same contradiction with $q$ as above. If $q$ is even, then $\frac{1}{3}(q^2-q+1)(q+1)^2(q-1)=3^5\cdot5\cdot17$. It is the same contradiction since $q \in\Z$. Or $\frac{1}{3}(q^2-q+1)(q+1)^2(q-1)=3^4\cdot17\cdot19$, it is a same contradiction as above.

\item[(g)] $|\cod(G/N)|=10$. We note that $\cod(\M_{22})\not\subseteq \cod(\J_{3})$ since $2^7\cdot3\cdot5\cdot11 \notin \cod(G)$.

\item[(h)] $|\cod(G/N)|=11$.

We note by observation that $G/N$ cannot be isomorphic to $\PSL(4,2)$, $\M_{23}$ or $\M_{12}$. we have $G/N \cong \PSL(4,2),$ $G/N \cong \M_{12},$ $G/N \cong \M_{23}$ or $G/N \cong {}^{2}\G_{2}(q)$ for $q = 3^{2f+1}, f \geq 1.$  The codegree sets of $\PSL(4,2), \M_{12}$ and $\M_{23}$ contain $3^2 \cdot 5 \cdot 7$, $2^2 \cdot 3 \cdot 5 \cdot 11$ and $3^2 \cdot5\cdot11 \cdot 23$ respectively, none of which are elements of the codegree set of $\J_3.$

If $G/N \cong {}^2\G_2(3^{2f+1})$, then by comparing the nontrivial odd codegree in each set, we can't get an integer root for $q$ which is a contradiction.

\item[(i)] $|\cod(G/N)|=12$. Then $G/N \cong \Sy_{4}(4)$. We note that $3^2\cdot5^2\cdot17\notin \cod(G)$. Thus $\cod(\Sy_{4}(4))\not\subseteq\cod(G)$.

\item[(j)] $|\cod(G/N)|=13$. Then $\cod(G/N)=\cod(\U_{4}(2))$ or $\cod(\Sy_{4}(q))$, for $q=2^f>4$.

Suppose $\cod(G/N)=\cod(\U_{4}(2))$. We have that $\cod(G/N) \not\subseteq \cod(G)$ by an easy observation since $3^4\cdot5 \notin \cod(G)$.

Suppose $\cod(G/N)=\cod(\Sy_{4}(q))$, for $q=2^f>4$. We have that $(q^2+1)(q+1)^2(q-1)^2$ can be equal to $3^5\cdot5\cdot17$ or $3^4\cdot17\cdot19$. If $(q^2+1)(q+1)^2(q-1)^2=3^5\cdot5\cdot17$, it is a contradiction since $3^5\cdot5\cdot17$ is not divisible by a square of prime. If $(q^2+1)(q+1)^2(q-1)^2=3^4\cdot17\cdot19$. We can't get an integer root for $q$. This implies a contradiction.

\item[(k)]Suppose $|\cod(G/N)|=14$. Neither $3^6\cdot7$ nor $3^3\cdot5^2\cdot13 \notin \cod(G)$. We have that $\cod(\U_{4}(3)) \not \subseteq cod(G)$ nor $\cod({^{2}\F_{4}(2){}}')$.

Thus $G/N\cong \J_{3}$.
\end{proof}

\begin{lem}\label{redG23}
Let $G$ be a finite group with $\cod(G)=\cod(\G_{2}(3))$. If $N$ is a maximal normal subgroup of $G$, then $G/N \cong \G_{2}(3)$.
\end{lem}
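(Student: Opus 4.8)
The plan is to mirror the reduction carried out in Lemmas~\ref{redS44}--\ref{redJ3}. Since $\cod(G)=\cod(\G_2(3))$, Remark~\ref{perfectgroup} shows $G$ is perfect, so the maximal normal subgroup $N$ gives a non-abelian simple quotient $G/N$, and from $\cod(G/N)\subseteq\cod(G)$ together with $|\cod(\G_2(3))|=15$ and Lemma~\ref{list} we get $|\cod(G/N)|\in\{4,5,\dots,15\}$. I would then eliminate every isomorphism type occurring for $|\cod(G/N)|\le14$ and read off $G/N\cong\G_2(3)$ from the case $|\cod(G/N)|=15$.

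Three structural features of $\cod(\G_2(3))$ drive the argument: its only nontrivial odd codegrees are $3^6\cdot7\cdot13$, $3^6\cdot13$ and $3^6\cdot7$, each with $3$-part exactly $3^6$; the set of primes dividing its members is $\{2,3,7,13\}$, so in particular no codegree is divisible by $5$; and no element is half of another. The standard cases fall quickly. For $G/N\cong\PSL(2,2^f)$ the odd codegree $k^2-1$ would have to be one of the three targets, but none of $3^6\cdot7\cdot13+1$, $3^6\cdot13+1$, $3^6\cdot7+1$ is a perfect square. For $G/N\cong\PSL(2,p^f)$ with $p$ odd I would use that $\cod(G/N)$ contains a codegree equal to half of another, impossible in $\cod(\G_2(3))$. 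The Suzuki groups are excluded because $3\nmid|{}^2B_2(q)|$ while each target is divisible by $3^6$. The remaining sporadic and small simple groups appearing for $|\cod(G/N)|\in\{6,\dots,14\}$ ($\PSL(3,4)$, $\A_7$, $\M_{11}$, $\J_1$, $\PSL(3,3)$, $\G_2(2)'$, $\M_{22}$, $\PSL(4,2)$, $\M_{12}$, $\M_{23}$, $\Sy_4(4)$, $\U_4(2)$, $\U_4(3)$, ${}^2\F_4(2)'$, $\J_3$) are each killed by exhibiting one member of their codegree set that is absent from $\cod(\G_2(3))$---typically a codegree carrying a prime outside $\{2,3,7,13\}$ (e.g.\ $3^2\cdot5\cdot7\in\cod(\PSL(3,4))$, $3^4\cdot5\in\cod(\U_4(2))$) or an odd codegree of the wrong shape ($3^3\cdot7\in\cod(\G_2(2)')$).

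The infinite families require the most care. For $\Sy_4(q)$ with $q=2^f>4$, the unique nontrivial odd codegree is $(q^2+1)(q-1)^2(q+1)^2$; since $q\ge8$ forces this to exceed $3^6\cdot7\cdot13$, no such $q$ can occur. For ${}^2\G_2(3^{2f+1})$ with $f\ge1$ the odd codegree $q^3(q+1-3m)$ has $3$-part $3^{6f+3}$ with $6f+3\ge9>6$, so it cannot equal any target. For $\PSL(3,q)$ and $\PSU(3,q)$ I would match the odd codegree (of shape $q^3(q^2\pm q+1)$, possibly divided by $3$) against the three targets and argue that no admissible prime power $q$ arises. Here lies the one genuine subtlety: for $\PSL(3,q)$ with $4<q\not\equiv1\ (\bmod\ 3)$ and $q=9$ one has $q^3(q^2+q+1)=3^6\cdot7\cdot13$, which \emph{is} a codegree of $\G_2(3)$, so the naive odd-codegree comparison fails. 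I would resolve this remaining case by the full prime-set observation: $\cod(\PSL(3,9))$ contains codegrees divisible by $5$ (for instance $(q^2+q+1)(q^2-1)(q-1)=2^7\cdot5\cdot7\cdot13$ at $q=9$), whereas $5\nmid|\G_2(3)|$, so $\cod(\PSL(3,9))\not\subseteq\cod(\G_2(3))$. Once all cases with $|\cod(G/N)|\le14$ are eliminated, only $|\cod(G/N)|=15$ survives, and comparing the smallest codegrees confirms $G/N\cong\G_2(3)$. The main obstacle is precisely isolating and disposing of the $\PSL(3,9)$ near-miss, since it is invisible to the odd-codegree test used everywhere else.
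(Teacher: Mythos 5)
Your outline follows the paper's proof almost case for case, but it contains one genuine error: the third ``structural feature'' you rely on --- that no element of $\cod(\G_2(3))$ is half of another --- is false. Both $2^5\cdot3^6$ and $2^6\cdot3^6$ are codegrees of $\G_2(3)$, as are $2^5\cdot3^5$ and $2^6\cdot3^5$. Consequently your disposal of the case $|\cod(G/N)|=5$, i.e.\ $G/N\cong\PSL(2,p^f)$ with $p$ odd, collapses: you cannot declare the halving pair $\bigl(k(k-\epsilon(k))/2,\ k(k-\epsilon(k))\bigr)$ ``impossible in $\cod(\G_2(3))$''. The case is still eliminable, and the paper does it correctly: since the only halving pairs in $\cod(\G_2(3))$ are the two listed above, one must have $k(k-\epsilon(k))=2^6\cdot3^6$ or $2^6\cdot3^5$; the discriminants $1+4\cdot2^6\cdot3^6=186625$ and $1+4\cdot2^6\cdot3^5=62209$ are not perfect squares (note $432^2=186624$, while $249^2=62001$ and $250^2=62500$), so $k\notin\Z$, a contradiction. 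You need to replace your argument for this case by that computation.

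On the other hand, your treatment of the case $|\cod(G/N)|=8$ is a genuine improvement on the paper. The paper asserts that $q^3(q^2+q+1)=3^6\cdot7\cdot13$, $3^6\cdot7$ or $3^6\cdot13$ has no integer solution, but $q=9$ (which satisfies $4<q\not\equiv1\pmod{3}$) gives $9^3(9^2+9+1)=3^6\cdot7\cdot13\in\cod(\G_2(3))$, so the paper's odd-codegree comparison really does fail for $\PSL(3,9)$, exactly as you say. Your repair is correct: $(q^2+q+1)(q^2-1)(q-1)=91\cdot80\cdot8=2^7\cdot5\cdot7\cdot13$ lies in $\cod(\PSL(3,9))$ and is divisible by $5$, while no element of $\cod(\G_2(3))$ is (its members involve only the primes $2,3,7,13$), whence $\cod(\PSL(3,9))\not\subseteq\cod(\G_2(3))$. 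In summary: once you fix the $\PSL(2,p^f)$ case as above, your proof is not only correct but strictly more careful than the published one.
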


\begin{proof}

Let $N$ be a maximal normal subgroup of $G$. Then $G/N$ is a non-abelian simple group and $|\cod(G/N)|$ is either $4, 5, 6, 7, 8, 9, 10, 11, 12, 13, 14$ or $15$.

\item[(a)] $|\cod(G/N)|=4$. Then, $G/N \cong \PSL(2, k)$ where $k=2^f\geq 4$ and $\cod (G/N)=\{1, k(k-1), k(k+1), k^2-1\}$. Hence $k^2-1=3^6\cdot7\cdot13$, $3^6\cdot7$ or $3^6\cdot13$ since these are the nontrivial odd codegrees. In either case $k$ is not an integer which is a contradiction.

\item[(b)] $|\cod(G/N)|=5$. Then $G/N\cong \PSL(2,k)$ where $k$ is an odd prime power and $\cod(G/N)=\left\{ 1, \frac{k(k-1)}{2}, \frac{k(k+1)}{2}, \frac{k^2-1}{2}, k(k-\epsilon(k))\right\}$ where $\epsilon(k)=(-1)^{(k-1)/2}$. Then $k(k-\epsilon(k))/2$, $k(k-\epsilon(k))\in \cod(G)$, and this implies that $\frac{k(k\pm1)}{2}=2^5\cdot3^6$ or $\frac{k(k\pm1)}{2}=2^5\cdot3^5$ since these are half of the other two elements. In either case, $k\not\in \Z$ which is a contradiction.

\item[(c)] $|\cod(G/N)|=6$.

By observation, $\cod(\PSL(3,4))\not\subset \cod(\G_{2}(3))$ since $3^2\cdot5\cdot7 \notin \cod(G)$, a contradiction.

Suppose $\cod(G/N)=\cod({}^2B_2(q))$. We note that $(q-1)(q^2+1)=3^6\cdot7\cdot13$, $3^6\cdot7$ or $3^6\cdot13$ since these are the nontrivial odd codegrees. We note that $3$ does not divide the order of the Suzuki groups, a contradiction.

\item[(d)]$|\cod(G/N)|=7$. We note that $\M_{11}$, $\J_{1}$, $\A_{7}$ and $\PSL(3,3)$. None of these codegree sets are included in $\cod(G)$.

\item[(e)] $|\cod(G/N)|=8$.

If $G/N\cong \PSU(3,q)$ with $4<q \not\equiv -1\ (\bmod\ 3)$, then if $q$ is odd we have $q^3(q^2-q+1)=3^6\cdot7\cdot13$, $3^6\cdot7$ or $3^6\cdot13$. These equations can't yield an integer root for $q$. We have a contradiction. If $q$ is even, we have a contradiction for the same reason.

If $G/N \cong \PSL(3,q)$ with $4<q \not\equiv 1\ (\bmod 3)$ and $q$ is odd, then $q^3(q^2+q+1)=3^6\cdot7\cdot13$, $3^6\cdot7$ or $3^6\cdot13$. We have that these equations don't hold as above. If $q$ is even, then $(q^2+q+1)(q^2-1)(q-1)=3^6\cdot7\cdot13$, $3^6\cdot7$ or $3^6\cdot13$. We have the same contradiction.

If $G/N\cong \G_2(2)'$ we see $\cod(\G_2(2)')\not\subseteq\cod(\G_{2}(3))$ for $3^3\cdot7 \notin \cod(G)$. This is a contradiction.

\item[(f)] $|\cod(G/N)|=9$.

Suppose $G/N\cong \PSL(3,q)$ with $4<q \equiv 1\ (\bmod\ 3)$. If $q$ is odd, then $\frac{1}{3}q^3(q^2+q+1)=3^6\cdot7\cdot13$, $3^6\cdot7$ or $3^6\cdot13$. This does not yield an integer root for $q$. It is a contradiction. If $q$ is even. Then $\frac{1}{3}(q^2+q+1)(q+1)(q-1)^2=3^6\cdot7\cdot13$, $3^6\cdot7$ or $3^6\cdot13$. It is a same contradiction as above.

Suppose $G/N\cong \PSU(3,q))$ with $4<q\equiv-1\ (\bmod3)$. If $q$ is odd, then $\frac{1}{3}q^3(q^2-q+1)=3^6\cdot7\cdot13$, $3^6\cdot7$ or $3^6\cdot13$. This does not yield an integer root for $q$, it is a contradiction. If $q$ is even, then $\frac{1}{3}(q^2-q+1)(q+1)^2(q-1)=3^6\cdot7\cdot13$, $3^6\cdot7$ or $3^6\cdot13$. It is a same  contradiction as above.

\item[(g)] $|\cod(G/N)|=10$. We note that $\cod(\M_{22})\not\subseteq \cod(\G_{2}(3))$ since $2^7\cdot3\cdot5\cdot11 \notin \cod(G)$.

\item[(h)] $|\cod(G/N)|=11$. We have $G/N \cong \PSL(4,2),$ $G/N \cong \M_{12},$ $G/N \cong \M_{23}$ or $G/N \cong {}^{2}\G_{2}(q)$ for $q = 3^{2f+1}, f \geq 1$.

The codegree sets of $\PSL(4,2), \M_{12}$ and $\M_{23}$ contain $3^2 \cdot 5 \cdot 7$, $2^2 \cdot 3 \cdot 5 \cdot 11$ and $3^2 \cdot5\cdot11 \cdot 23$ respectively, none of which are elements of the codegree set of $\G_{2}(3)$.

If $G/N \cong {}^2\G_2(3^{2f+1})$, then by comparing the largest $3$-parts, this does not yield an integer root for $q$. It is a contradiction.

\item[(i)] $|\cod(G/N)|=12$. Then $G/N\cong \Sy_{4}(4)$. We note that $3^2\cdot5^2\cdot17\notin\cod(G)$. Thus $\cod(\Sy_{4}(4))\not\subseteq\cod(G)$.

\item[(j)] $|\cod(G/N)|=13$. Then $\cod(G/N)=\cod(\U_{4}(2))$ or $\cod(\Sy_{4}(q))$, for $q=2^f>4$.

 $\cod(G/N)=\cod(\U_{4}(2))$, we have that $\cod(G/N)\not\subseteq\cod(G)$ since $3^4\cdot5 \notin \cod(G)$.

 $\cod(G/N)=\cod(\Sy_{4}(q))$, for $q=2^f>4$. Again by setting an equality between the nontrivial odd codegrees of each set, we obtain no integer solution in these three cases, a contradiction.

\item[(k)] $|\cod(G/N)|=14$. Then $\cod(G/N)=\cod(\U_{4}(3))$, ${^{2}\F_{4}(2) {}}'$ or $\cod(\J_{3})$.

It is easily checked that neither $3^6\cdot7$, $3^3\cdot5^2\cdot13$ nor $3^5\cdot5\cdot17$ is in $\cod(G)$. Thus $\cod(\U_{4}(3))\not\subseteq\cod(G)$, $\cod({^{2}\F_{4}(2) {}}')\not\subseteq\cod(G)$  nor $\cod(\J_{3})\not\subseteq \cod(G)$. This is a contradiction.

\item[(l)] $|\cod(G/N)|=15$. We have that $ G/N \cong \G_{2}(3)$ by an easy observation.
\end{proof}

\begin{lem}\label{redA9}
Let $G$ be a finite group with $\cod(G) = \cod(\A_9).$ If $N$ is the maximal normal subgroup of $G,$ it we have $G/N \cong \A_9.$
\end{lem}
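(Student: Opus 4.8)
The plan is to reuse the reduction scheme of Lemmas \ref{redS44}--\ref{redG23}. From $\cod(G)=\cod(\A_9)$ together with Remark \ref{perfectgroup}, the group $G$ is perfect, so $G/N$ is a non-abelian simple group with $\cod(G/N)\subseteq\cod(\A_9)$; hence $|\cod(G/N)|\le 16$, and Lemma \ref{list} restricts $G/N$ to the groups listed for each value $|\cod(G/N)|\in\{4,5,\dots,16\}$. I would then dispose of these cardinalities one at a time.

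The argument rests on two arithmetic facts about $\A_9$. First, \emph{every nontrivial element of $\cod(\A_9)$ is even}: each entry in the $\A_9$ row of Table 1 other than $1$ carries a positive power of $2$. Second, $|\A_9|=2^6{\cdot}3^4{\cdot}5{\cdot}7$, so every codegree of $\A_9$ divides this number; in particular no codegree is divisible by a prime larger than $7$, nor by $5^2$ or $7^2$. The first fact immediately excludes every candidate possessing a nontrivial odd codegree: $\PSL(2,2^f)$ (via $k^2-1$), the Suzuki groups ${}^2B_2(q)$, $\PSL(3,4)$, $\PSL(3,3)$, the Ree groups ${}^2\G_2(3^{2f+1})$, both families $\PSL(3,q)$ and $\PSU(3,q)$ (for either parity of $q$, one of the tabulated codegrees is odd), $\G_2(2)'$, $\PSL(4,2)$, $\Sy_4(4)$, $\U_4(2)$, $\Sy_4(q)$ with $q=2^f>4$, $\U_4(3)$, ${}^2\F_4(2)'$, $\J_3$, and $\G_2(3)$. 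The groups $\M_{11},\J_1,\M_{12},\M_{22},\M_{23}$ are excluded either by the same remark or by the appearance of the prime $11$ (or $23$) in one of their codegrees, which cannot divide $|\A_9|$. For $\A_7$ every nontrivial codegree is at most $2^2{\cdot}3{\cdot}5{\cdot}7$, strictly below the smallest nontrivial codegree $2^3{\cdot}3{\cdot}5{\cdot}7$ of $\A_9$, so $\cod(\A_7)\not\subseteq\cod(\A_9)$. Finally, in the top case $|\cod(G/N)|=16$ the competitor $\J_2$ is ruled out because its codegree $2^6{\cdot}3^3{\cdot}5^2$ is divisible by $5^2$, whereas the $5$-part of $|\A_9|$ is only $5$.

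The one genuinely delicate case is $|\cod(G/N)|=5$, where $G/N\cong\PSL(2,k)$ for an odd prime power $k$ and $\cod(G/N)=\{1,\tfrac{k(k-1)}{2},\tfrac{k(k+1)}{2},\tfrac{k^2-1}{2},k(k-\epsilon(k))\}$, with $k(k-\epsilon(k))=2\cdot\tfrac{k(k-\epsilon(k))}{2}$. Thus $\cod(\A_9)$ must contain a pair $(a,2a)$, and here---unlike for $\Sy_4(4)$---such doubling pairs do occur. Enumerating them forces $k(k-\epsilon(k))$ to equal one of $2^4{\cdot}3^3{\cdot}5$, $2^5{\cdot}3^3{\cdot}5$, $2^6{\cdot}3^3{\cdot}5$, or $2^4{\cdot}3^4{\cdot}5$. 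I expect this to be the main obstacle, because the routine ``no integer root'' reasoning fails: among these four values only $2^4{\cdot}3^4{\cdot}5=6480=81{\cdot}80$ is a product of consecutive integers, giving the honest solution $k=81=3^4$ with $\epsilon(81)=1$. To close the case I would compute the remaining codegree $\tfrac{k(k+1)}{2}=\tfrac{81{\cdot}82}{2}=3^4{\cdot}41$; since $41\nmid|\A_9|$, this value lies outside $\cod(\A_9)$, a contradiction. With every proper candidate eliminated, the only surviving possibility is $|\cod(G/N)|=16$ with $G/N\cong\A_9$, as desired.
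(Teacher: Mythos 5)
Your proposal is correct and follows the same global strategy as the paper: use Remark \ref{perfectgroup} to get perfection, apply Lemma \ref{list} to enumerate the candidates for $G/N$ with $|\cod(G/N)|\le 16$, and eliminate each one by exhibiting a codegree that cannot lie in $\cod(\A_9)$. Your organization is cleaner, since almost every case is killed uniformly by the two facts that all nontrivial elements of $\cod(\A_9)$ are even and that their prime divisors lie in $\{2,3,5,7\}$ with $5$- and $7$-parts equal to $5$ and $7$; the paper instead argues case by case with ad hoc ``no integer root'' claims. The substantive difference is your treatment of $|\cod(G/N)|=5$, and here your version is not just different but \emph{more correct} than the paper's. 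The paper asserts that the doubling condition forces $\tfrac{k(k\pm1)}{2}=2^5{\cdot}3^3{\cdot}5$ or $2^3{\cdot}3^4{\cdot}5$ (missing two of the four doubling pairs, as you note) and then concludes ``contradiction since $k\in\Z$''---but this last claim is false: $k=81$ satisfies $\tfrac{k(k-1)}{2}=\tfrac{81\cdot 80}{2}=2^3{\cdot}3^4{\cdot}5$ and $k(k-\epsilon(k))=2^4{\cdot}3^4{\cdot}5$ with $\epsilon(81)=1$, so $\PSL(2,81)$ survives the paper's argument. You correctly identify $k=81$ as the unique integer solution among the four admissible values and then eliminate it by computing the remaining codegree $\tfrac{k(k+1)}{2}=3^4{\cdot}41\notin\cod(\A_9)$, since $41\nmid|\A_9|$. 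This is exactly the repair the paper's proof needs, so your write-up both matches the intended structure and closes a genuine gap in the published argument.
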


\begin{proof}
As $G/N$ is a non-abelian simple group, $\cod(G/N) \subseteq \cod(G)$, so we have $\cod(G/N)=4, 5, 6, 7, 8, 9, 10, 11, 12, 13, 14, 15$ or $16$.

We will show if $|\cod(G/N)|\leq 16$ and if $G/N \not \cong \A_9$, then $\cod(G/N) \not \subseteq \cod(G)$. Assume on the contrary that there exists a maximal normal subgroup $N$ such that $|\cod(G/N)|\leq16$, and $G/N \not \cong \A_9$ and $\cod(G/N) \subseteq \cod(G)$. We analyze each case separately.

\item[(a)] $|\cod(G/N)| = 4$. By Lemma \ref{list}, $G/N \cong \PSL(2,k)$ where $k = 2^f \geq 4$ so $k^2 - 1 \in \cod(G/N)$. For all even $k$, we have $k^2 - 1 \equiv 1 \pmod{2}$. However, there is no nontrivial odd codegree in $\cod(\A_9)$, contradicting $\cod(G/N) \subseteq \cod(G)$.

\item[(b)] $|\cod(G/N)| = 5$. By Lemma \ref{list}, $G/N \cong \PSL(2,k)$ where $k > 5$ is the power of an odd prime number. This implies that $\displaystyle k(k-\epsilon(k), \frac{k(k-\epsilon(k)))}{2} \in \cod(G/N),$ where $k = (-1)^{(k-1)/2}.$ We have that $\frac{k(k\pm1)}{2}=2^5\cdot3^3\cdot5$ or $\frac{k(k\pm1)}{2}=2^3\cdot3^4\cdot5$, since these are half of the other two elements. Contradiction since $k\in \Z $.

\item[(c)] $|\cod(G/N)| = 6$. By Lemma \ref{list}, $G/N \cong \PSL(3,4)$ or $G/N \cong {}^2B_2(q),$ with $q = 2^{2f+1}$.

In the first case, notice that $3^2 \cdot5\cdot7 \in \cod(G/N)$ is the only nontrivial odd element, but there are no nontrivial odd elements in $\cod(\A_9)$, contradicting $\cod(G/N) \subseteq \cod(G)$. \\
In the second case, notice that $(q-1)(q^2+1) \in \cod({}^2B_2(q))$. As $q$ is even, $(q-1)(q^2+1) \equiv 1 \pmod{2}$. However, there are no nontrivial odd elements in $\cod(\A_9)$, contradicting $\cod(G/N) \subseteq \cod(G)$.

\item[(d)] $|\cod(G/N)| = 7$. By Lemma \ref{list}, we have $G/N \cong \PSL(3,3), \A_7, \M_{11}$ or $\J_1$. Notice that $3^3 \cdot 13 \in \cod(\PSL(3,3))$, $2^2 \cdot 3 \cdot 5 \cdot 7 \in \cod(\A_7)$, $2^3 \cdot 3^2 \cdot 11 \in \cod(\M_{11})$ and $3 \cdot 5 \cdot 11 \cdot 19 \in \cod(\J_1)$. By inspection, we can see that none of these elements are contained in $\cod(\A_9)$. Thus, $\cod(G/N) \not \subseteq \cod(G)$, a contradiction.

\item[(e)] $|\cod(G/N)| = 8$. By Lemma \ref{list}, we have $G/N \cong \PSU(3,q)$ with $4 < q \not \equiv -1 \pmod{3}$, $G/N \cong \PSL(3,q)$ with $4 < q \not \equiv 1 \pmod{3}$ or $G/N \cong \G_2(2)'$. Observe that the only nontrivial element not divisible by $3$ in $\cod(\A_9)$ is $2^5 \cdot 5 \cdot 7$.

In the first case, notice that $(q^2-q+1)(q+1)^2(q-1), q^3(q^2-q+1) \in \cod(\PSU(3,q))$. If $q \equiv 1 \mod 3$, then $q^3(q^2-q+1) \not \equiv 0 \pmod{3}$, so it must hold that $q^3(q^2-q+1) = 2^5 \cdot 5 \cdot 7$. However, this does not yield an integer root for $q$. Alternatively, if $q \equiv 0 \pmod{3}$, then $(q^2-q+1)(q+1)^2(q-1) \not \equiv 0 \pmod{3}$, so it must hold that $(q^2-q+1)(q+1)^2(q-1) = 2^5 \cdot 5 \cdot 7$. This also does not yield an integer root for $q$. Thus, $\cod(G/N) \not \subseteq \cod(G) =\cod(\A_9)$, a contradiction.

In the second case, notice that $q^3(q^2+q+1) \in \cod(\PSL(3,q)).$ As $q \not \equiv 1 \pmod{3}, q^3(q^2+q+1) \not \equiv 0 \pmod{3}$, so we must have $q^3(q^2+q+1) = 2^5 \cdot 5 \cdot 7$. However, this does not an integer solution for $q$. Thus, $\cod(G/N) \not \subseteq \cod(G) =\cod(\A_9)$, a contradiction.

In the final case,  $3^3 \cdot 7 \in \cod(\G_2(2)')$ but $3^3 \cdot 7 \not \in \cod(\A_9)$, contradicting $\cod(G/N) \subseteq \cod(G) =\cod(\A_9)$.

\item[(f)] $|\cod(G/N)| = 9$. By Lemma \ref{list}, we have $G/N \cong \PSL(3,q)$ with $4 < q \equiv 1 \pmod{3}$ or $G/N \cong \PSU(3,q)$ with $4 < q \equiv -1 \pmod{3}$.

In the first case, notice that $\frac{1}{3}q^3(q^2+q+1) \in \cod(\PSL(3,q))$. It can be verified through a complete search that $q^2+q+1 \not \equiv 0 \pmod{9}$. Thus, $\frac{1}{3}q^3(q^2+q+1) \not \equiv 0 \pmod{3}$, so we must have $\frac{1}{3}q^3(q^2+q+1) = 2^5 \cdot 5 \cdot 7$. However, this yields no integer roots for $q$, contradicting $\cod(G/N) \subseteq \cod(G) =\cod(\A_9)$.

In the second case, notice that $\frac{1}{3}q^3(q^2-q+1) \in \cod(\PSU(3,q))$. It can be verified through a complete search that $q^2-q+1 \not \equiv 0 \pmod{9}$, so $\frac{1}{3}q^3(q^2-q+1) \not \equiv 0 \pmod{3}$, so we must have $\frac{1}{3} q^3(q^2-q+1) = 2^5 \cdot 5 \cdot 7$. However, there are no integer roots for $q$, contradicting $\cod(G/N) \subseteq \cod(G) =\cod(\A_9)$.

\item[(g)] $|\cod(G/N)| = 10$. By Lemma \ref{list}, we have $G/N \cong \M_{22}$. However, $2^7 \cdot 3 \cdot 5 \cdot 11$ is only present in $\cod(\M_{22})$ and not in $\cod(\A_9)$, contradicting $\cod(G/N) \subseteq \cod(G)$.

\item[(h)] $|\cod(G/N)| = 11$. By Lemma \ref{list}, we have $G/N \cong \PSL(4,2)$, $G/N \cong \M_{12}$, $G/N \cong \M_{23}$ or $G/N \cong {}^{2}\G_{2}(q)$ for $q = 3^{2f+1}, f \geq 1$. The codegree sets of $\PSL(4,2), \M_{12}$ and $\M_{23}$ contain $3^2 \cdot 5 \cdot 7$, $2^2 \cdot 3 \cdot 5 \cdot 11$ and $3^2 \cdot5\cdot11 \cdot 23$ respectively, none of which are elements of the codegree set of $\A_{9}$.

Notice additionally that $(q^2-1)(q^2-q+1) \in \cod({}^{2}G_{2}(q))$. However, $(q^2-1)(q^2-q+1) \not \equiv 0 \pmod{3}$ when $q$ is a multiple of three. This implies that we must have $(q^2-1)(q^2-q+1) = 2^5 \cdot 5 \cdot 7$, yet this does not yield an integer root for $q$. All of these cases contradicts $\cod(G/N) \subseteq \cod(G) =\cod(\A_9)$.

\item[(i)] $|\cod(G/N)| = 12$. By Lemma \ref{list}, we have $G/N \cong \Sy_4(4)$. However, the element $3^2 \cdot 5^2 \cdot 17$ is the only nontrivial odd codegree in the codegree set of $\Sy_4(4)$, it is not in the codegree set of $\A_9$,  contradicting $\cod(G/N) \subseteq \cod(G)$.

\item[(j)] $|\cod(G/N)| = 13$. By Lemma \ref{list}, we have $G/N \cong \U_4(2)$ or $G/N \cong \Sy_4(q)$ for $q = 2^f > 4$.

In the first case, the element $3^4 \cdot 5$ is only in the codegree set of $\U_4(2)$ and not the codegree set of $\A_9$.

In the second case, $2q^3(q-1)^2(q+1)^2 \in \cod(\Sy_4(q))$. However, if $\nu_2(q) > 2$, then $\nu_2(2q^3(q-1)^2(q+1)^2) > 7$, yet there does not exist an element in $\cod(\A_9)$ that is divisible by $2^7$. Both these cases contradict $\cod(G/N) \subseteq \cod(G)$.

\item[(k)] $|\cod(G/N)| = 14$. By Lemma \ref{list}. If $G/N \cong \U_4(3)$. We have that $ 3^6\cdot5$ and $3^6\cdot7$ are both the nontrivial odd elements in $\cod(\U_4(3))$, there are all the nontrivial even elements in $\cod (\A_9)$, a contradiction. If $G/N \cong {}^2\F_4(2)'$ or $G/N \cong \J_3$. the codegree sets of $\U_4(3), {}^2\F_4(2)'$ and $\J_3$ contain $3^3 \cdot5^2\cdot13$, $3^4 \cdot 17 \cdot 19$ respectively, none of which are elements of $\cod(\A_9)$. This contradicts $\cod(G/N) \subseteq \cod(G)$.

\item[(l)] $|\cod(G/N)| = 15$. By Lemma \ref{list}, we have $G/N \cong \G_2(3)$. However, there are three nontrivial odd elements in the codegree set of $\G_2(3)$ and not in the codegree set of $\A_9$. This contradicts $\cod(G/N) \subseteq \cod(G)$.

\item[(m)] $|\cod(G/N)| = 16$. By Lemma \ref{list}, $G/N \cong \A_9$ or $G/N \cong \J_2$.

Notice that $2^6 \cdot 3^3 \cdot 5^2 \in \cod(\J_2)$ but this element is not in $\cod(\A_9)$. This contradicts $\cod(G/N) \subseteq \cod(G)$.

Having ruled out all other cases for $G/N \equiv G'$ with $\cod(G') \leq 16$, we conclude that $G/N \cong \A_9$.
\end{proof}

\begin{lem}\label{redJ2}
Let $G$ be a finite group with $\cod(G) = \cod(\J_2)$. If $N$ is the maximal normal subgroup of $G$, it we have $G/N \cong \J_2$.
\end{lem}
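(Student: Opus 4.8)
The plan is to follow the template of the preceding reduction lemmas, and in particular Lemma \ref{redA9}, since $\J_2$ also satisfies $|\cod(\J_2)|=16$. Because $\cod(G)=\cod(\J_2)$, the group $G$ is perfect by Remark \ref{perfectgroup}, so for a maximal normal subgroup $N$ the quotient $G/N$ is a non-abelian simple group with $\cod(G/N)\subseteq\cod(G)$. By Lemma \ref{list} we have $|\cod(G/N)|\in\{4,5,\dots,16\}$, and the possible isomorphism types of $G/N$ are exactly those listed in items (a)--(m). The objective is to eliminate every candidate except $\J_2$ by producing an element of $\cod(G/N)$ that does not lie in $\cod(\J_2)$.

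The organizing principle I would use is the single structural observation, read off Table 1, that $\cod(\J_2)$ contains \emph{no nontrivial odd codegree}: every nontrivial entry is divisible by $2$. I would then verify that all but finitely many candidate groups do possess a nontrivial odd codegree, which at once forces $\cod(G/N)\not\subseteq\cod(\J_2)$. Concretely, for $\PSL(2,2^f)$ the entry $k^2-1$ is odd; for $\PSL(2,k)$ with $k$ odd exactly one of $\tfrac{k(k-1)}{2},\tfrac{k(k+1)}{2}$ is odd (the factor $k\mp1\equiv 2\pmod 4$); for ${}^2B_2(q)$ the entry $(q-1)(q^2+1)$ is odd; for $\Sy_4(q)$ with $q$ even the entry $(q^2+1)(q-1)^2(q+1)^2$ is odd; for ${}^2\G_2(q)$ the entry $3^{5f+3}(q^2-q+1)$ is odd; and for $\PSL(3,q)$ and $\PSU(3,q)$ a short parity bookkeeping (splitting on the parity of $q$, and cancelling the factor $3$ in the $q\equiv\pm1$ families) yields a nontrivial odd codegree in every subcase. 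The remaining sporadic and fixed candidates $\PSL(3,4)$, $\PSL(3,3)$, $\M_{11}$, $\J_1$, $\G_2(2)'$, $\PSL(4,2)$, $\M_{23}$, $\Sy_4(4)$, $\U_4(2)$, $\U_4(3)$, ${}^2\F_4(2)'$, $\J_3$, $\G_2(3)$ each display an explicit nontrivial odd codegree in Table 1 (for instance $3^2\cdot5\cdot7$, $3^3\cdot13$, $3^2\cdot5\cdot11$, $3\cdot5\cdot11\cdot19$, $3^3\cdot7$, $3^2\cdot5\cdot11\cdot23$, $3^2\cdot5^2\cdot17$, $3^4\cdot5$, $3^6\cdot5$, $3^3\cdot5^2\cdot13$, $3^4\cdot17\cdot19$, $3^6\cdot7\cdot13$), none of which can occur in $\cod(\J_2)$.

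After this sweep the only surviving candidates whose nontrivial codegrees are all even are $\A_7$, $\M_{22}$, $\M_{12}$, $\A_9$, and $\J_2$ itself, and these I would rule out individually. The groups $\M_{22}$ and $\M_{12}$ each have a codegree divisible by $11$, whereas $11\nmid|\J_2|$; the group $\A_7$ has the codegree $2^2\cdot3\cdot5\cdot7$, which is strictly smaller than the least nontrivial codegree $2^3\cdot3^2\cdot5^2$ of $\J_2$ and hence is not in $\cod(\J_2)$; and, most delicately, $\A_9$ is separated from $\J_2$ by the entry $2^3\cdot3^4\cdot5\cdot7\in\cod(\A_9)$, whose $3$-part $3^4$ exceeds the maximal $3$-part $3^3$ occurring throughout $\cod(\J_2)$ (equivalently, $\cod(\J_2)$ realizes $5^2$ while $5^2\nmid|\A_9|$). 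With every alternative eliminated, case (m) leaves only $G/N\cong\J_2$.

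I expect the main obstacle to be the uniform verification of the oddness claim across the infinite families $\PSL(3,q)$ and $\PSU(3,q)$: one must track, case by case on the residue of $q$ modulo $6$ and on whether the $\tfrac13$-normalized degrees apply, that the chosen codegree is genuinely odd and nontrivial, and this elementary but branching bookkeeping is where an oversight would most easily slip in. The other point demanding care is the final $\A_9$-versus-$\J_2$ discrimination, since both groups share the superficial features of having $16$ codegrees, all nontrivial ones even, and prime support $\{2,3,5,7\}$; the argument must therefore pin down a genuine arithmetic separation (the jump from $3^3$ to $3^4$, or the presence of $5^2$) rather than a cosmetic one.
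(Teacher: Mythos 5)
Your proposal is correct, and it reaches the conclusion by the same overall skeleton as the paper (perfectness via Remark \ref{perfectgroup}, then Lemma \ref{list} to enumerate the candidates for $G/N$, then elimination by exhibiting a codegree of the candidate outside $\cod(\J_2)$), but your elimination mechanism is genuinely different and tidier. The paper works case by case: for each infinite family it isolates the codegrees prime to $3$ or the unique admissible target (e.g.\ $2^7\cdot 5^2$), sets up an equation such as $q^3(q^2-q+1)=2^7\cdot5^2$, and argues there is no integer root; for $\Sy_4(q)$ it uses a $2$-adic valuation bound. You instead observe once that every nontrivial element of $\cod(\J_2)$ is even, and check that every infinite family ($\PSL(2,k)$ for both parities of $k$, ${}^2B_2(q)$, $\PSL(3,q)$, $\PSU(3,q)$, ${}^2\G_2(q)$, $\Sy_4(q)$) always carries a nontrivial odd codegree; this kills all of them with no Diophantine analysis at all, and it also sidesteps a soft spot in the paper's treatment of $\PSL(3,q)$ (the claim that $q^3(q^2+q+1)\not\equiv 0 \pmod 3$ fails when $q\equiv 0 \pmod 3$, a case your parity argument covers without effort). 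Your residual list $\A_7$, $\M_{12}$, $\M_{22}$, $\A_9$ of candidates with all-even codegrees is exactly right, and your individual eliminations ($11\nmid|\J_2|$ for $\M_{12},\M_{22}$; the size bound for $2^2\cdot3\cdot5\cdot7$ versus the minimum $2^3\cdot3^2\cdot5^2$ of $\cod(\J_2)$ for $\A_7$; the $3$-part jump $3^4>3^3$ witnessed by $2^3\cdot3^4\cdot5\cdot7\in\cod(\A_9)$) are all valid, where the paper instead quotes single elements such as $2^4\cdot3^3\cdot5\in\cod(\A_9)\setminus\cod(\J_2)$. One small caution: your parenthetical claim that the $\A_9$ separation is ``equivalently'' seen from $5^2\nmid|\A_9|$ argues the reverse containment $\cod(\J_2)\not\subseteq\cod(\A_9)$; it does finish the job, but only after invoking the fact that both sets have exactly $16$ elements (so containment would force equality), a step you should make explicit if you use that variant.
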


\begin{proof}
As $G/N$ is a non-abelian simple group, $\cod(G/N) \subseteq \cod(G)$, so we have that $\cod(G/N)=4, 5, 6, 7, 8, 9, 10, 11, 12, 13, 14, 15$ or $16$.

Assume on the contrary that there exist a maximal normal subgroup $N$ such that $G/N \not \cong \J_2, \cod(G/N) \leq 16$ and $\cod(G/N) \subseteq \cod(G)$. We exhaust all possibilities to show the previous statement is false.

\item[(a)] $|\cod(G/N)| = 4$. By Lemma \ref{list}, $G/N \cong \PSL(2,k)$ where $k = 2^f \geq 4,$ so $k^2-1 \in \cod(G/N)$. For all even $k$, $k^2-1$ is odd, yet there are no nontrivial odd elements in the codegree set of $\J_2$. This contradicts $\cod(G/N) \subseteq \cod(G)$.

\item[(b)] Suppose $|\cod(G/N)| = 5$. By Lemma \ref{list}, $G/N \cong \PSL(2,k)$ where $k$ is a power of an odd prime number. This implies that $\displaystyle k(k-\epsilon(k), \frac{k(k-\epsilon(k)))}{2} \in \cod(G/N)$, where $k = (-1)^{(k-1)/2}$.
 Then $k(k-\epsilon(k))/2$, $k(k-\epsilon(k))\in \cod(G)$, and we have that $\frac{k(k\pm1)}{2}=2^6\cdot3\cdot 5^2$ since it is half of the other element. This is a contradiction since  $k\in \Z$.

\item[(c)] Suppose $|\cod(G/N)| = 6$. By Lemma \ref{list}, $G/N \cong \PSL(3,4)$ or $G/N \cong {}^2B_2(q)$, with $q = 2^{2n+1}$.

If $G/N \cong \PSL(3,4)$ then $3^2 \cdot 5 \cdot 7 \in \cod(G/N)$ is odd, but $3^2 \cdot 5 \cdot 7 \not\in \cod(G) = \cod(\J_2)$, contradicting $\cod(G/N) \subseteq \cod(G)$.

In the second case, notice that $(q-1)(q^2+1) \in \cod({}^2B_2(q))$. As $q$ is even, $(q-1)(q^2+1) \equiv 1 \pmod{2}$. However, there are no nontrivial odd elements in $\cod(\J_2)$, contradicting $\cod(G/N) \subseteq \cod(G)$.

\item[(d)] Suppose $|\cod(G/N)| = 7$. By Lemma \ref{list}, we have $G/N \cong \PSL(3,3), \A_7, \M_{11}$ or $\J_1$. We may see that $3^3 \cdot 13 \in \cod(\PSL(3,3))$, $2^2 \cdot 3 \cdot 5 \cdot 7 \in \cod(\A_7)$, $2^3 \cdot 3^2 \cdot 11 \in \cod(\M_{11})$ and $3 \cdot 5 \cdot 11 \cdot 19 \in \cod(\J_1)$. However, none of these elements are elements of $\cod(\J_2)$. Thus $\cod(G/N) \not \subseteq \cod(G)$, a contradiction.

\item[(e)] Suppose  $|\cod(G/N)| = 8$. By Lemma \ref{list}, we have $G/N \cong \PSU(3,q)$ with $4 < q \not \equiv -1 \pmod{3}$, $G/N \cong \PSL(3,q)$ with $4 < q \not \equiv 1 \pmod{3}$ or $G/N \cong \G_2(2)'$. Observe that the only nontrivial element not divisible by $3$ in $\cod(\J_2)$ is $2^7 \cdot 5^2$.

If $G/N \cong \PSU(3,q)$, notice that $(q^2-q+1)(q+1)^2(q-1), q^3(q^2-q+1) \in \cod(\PSU(3,q))$. If $q \equiv 1 \mod 3,$ then $q^3(q^2-q+1) \not \equiv 0 \pmod{3}$, so it must hold that $q^3(q^2-q+1) = 2^7\cdot5^2$. However, this does not yield an integer root for $q$. Alternatively, if $q \equiv 0 \pmod{3}$, then $(q^2-q+1)(q+1)^2(q-1) \not \equiv 0 \pmod{3}$ so it must hold that $(q^2-q+1)(q+1)^2(q-1) = 2^7 \cdot 5^2$. This also does not yield an integer root for $q$. Thus $\cod(G/N) \not \subseteq \cod(G) =\cod(\J_2)$, a contradiction.

If $G/N \cong \PSL(3,q)$, we have $q^3(q^2+q+1) \in \cod(\PSL(3,q))$. As $q \not \equiv 1 \pmod{3}, q^3(q^2+q+1) \not \equiv 0 \pmod{3}$ so it must hold that $q^3(q^2+q+1) = 2^7 \cdot 5^2$. However, this does not lead to an integer solution for $q$. Thus $\cod(G/N) \not \subseteq \cod(G) =\cod(\J_2)$, a contradiction.

In the final case,  $3^3 \cdot 7 \in \cod(\G_2(2)')$ but $3^3 \cdot 7 \not \in \cod(\J_2)$, contradicting $\cod(G/N) \subseteq \cod(G) =\cod(\J_2)$.

\item[(f)] $|\cod(G/N)| = 9$. By Lemma \ref{list}, we have $G/N \cong \PSL(3,q)$ with $4 < q \equiv 1 \pmod{3}$ or $G/N \cong \PSU(3,q)$ with $4 < q \equiv -1 \pmod{3}$.

Observe that by a complete search of remainders modulo $9$ that both $q^2 - q + 1$ and $q^2 + q + 1$ cannot be divisible by $9$.

In the first scenario, we have $\frac{1}{3}q^3(q^2+q+1) \in \cod(\PSL(3,q))$. From the above remark, notice that $\frac{1}{3}q^3(q^2+q+1) \not \equiv 0 \pmod{3}$. Therefore, we must have $\frac{1}{3}q^3(q^2+q+1) = 2^7 \cdot 5^2$, which does not yield an integer root for $q$, a contradiction.

In the second scenario, we have  $\frac{1}{3}q^3(q^2-q+1) \in \cod(\PSU(3,q))$. As with the first scenario, this element is not divisible by $3$, so we must have $\frac{1}{3}q^3(q^2-q+1) = 2^7 \cdot 5^2$. However, this does not yield an integer root for $q$, a contradiction.

\item[(g)] $|\cod(G/N)| = 10$. By Lemma \ref{list}, we have $G/N \cong \M_{22}$. However, $2^7 \cdot 3 \cdot 5 \cdot 11$ is only present in $\cod(\M_{22})$ and not in $\cod(\J_2)$, contradicting $\cod(G/N) \subseteq \cod(G)$.

\item[(h)] $|\cod(G/N)| = 11$. By Lemma \ref{list}, we have $G/N \cong \PSL(4,2)$, $G/N \cong \M_{12}$, $G/N \cong \M_{23}$ or $G/N \cong {}^{2}\G_{2}(q)$ for $q = 3^{2f+1}, f \geq 1$.

The codegree sets of $\PSL(4,2)$, $\M_{12}$ and $\M_{23}$ contain $3^2 \cdot 5 \cdot 7$, $2^2 \cdot 3 \cdot 5 \cdot 11$ and $2^7 \cdot 7 \cdot 11$ respectively, none of which are elements of the codegree set of $\J_2$.

If $G/N \cong {}^{2}\G_{2}(q)$, $(q^2-1)(q^2-q+1) \in \cod({}^{2}\G_{2}(q))$. However, $(q^2-1)(q^2-q+1) \not \equiv 0 \pmod{3}$ in this case, so we must have $(q^2-1)(q^2-q+1) = 2^5 \cdot 5 \cdot 7$, yet this does not yield an integer root for $q$. This is a contradiction to $\cod(G/N) \subseteq \cod(G) =\cod(\J_2)$.

\item[(i)] $|\cod(G/N)| = 12$. By Lemma \ref{list}, we have $G/N \cong \Sy_4(4)$. However, the element $2^7 \cdot 5^2 \cdot 17$ is only in the codegree set of $\Sy_4(4)$ and not in the codegree set of $\J_2$ ,contradicting $\cod(G/N) \subseteq \cod(G)$.

\item[(j)] $|\cod(G/N)| = 13$. By Lemma \ref{list}, we have $G/N \cong \U_4(2)$ or $G/N \cong \Sy_4(q)$ for $q = 2^f > 4$.

In the first case, the element $3^4 \cdot 5 \in \cod(\U_4(2))$ but it is not in $\cod(\J_2)$.

In the second case, $2q^3(q-1)^2(q+1)^2 \in \cod(\Sy_4(q))$. However, as $\nu_2(q) > 2$, then $\nu_2(2q^3(q-1)^2(q+1)^2) > 7$ yet there does not exist an element in $\cod(\A_9)$ that is divisible by $2^8$.
In both of these cases, $\cod(G/N) \not \subseteq \cod(G)$.

\item[(k)] $|\cod(G/N)| = 14$. By Lemma \ref{list}, we have $G/N \cong \U_4(3)$, $G/N \cong {}^2\F_4(2)'$ or $G/N \cong \J_3$. However, the codegree sets of $\U_4(3), {}^2\F_4(2)'$ and $\J_3$ contain $2^7 \cdot 3^5 \cdot 5$, $2^{10} \cdot 3^3 \cdot 5^2$ and $2^7 \cdot 3^5 \cdot 19$ respectively, none of which are elements of $\cod(\J_2)$. This contradicts $\cod(G/N) \subseteq \cod(G)$.

\item[(l)] $|\cod(G/N)| = 15$. By Lemma \ref{list}, we have $G/N \cong \G_2(3)$. However, the element $2^5 \cdot 3^6 \cdot 13$ is only present in the codegree set of $\G_2(3)$ and not in the codegree set of $\J_2$, which contradicts $\cod(G/N) \subseteq \cod(G)$.

\item[(m)] $|\cod(G/N)| = 16$. By Lemma \ref{list}, we have $G/N \cong \A_9$ or $G/N\cong \J_2$. $2^4 \cdot 3^3 \cdot 5 \in \cod(\A_9)$ but is not in $\cod(\J_2)$, contradicting the assumption.

Having ruled out all other cases, we conclude $G/N \cong \J_2$.
\end{proof}

\begin{lem}\label{redMcL}
Let $G$ be a finite group with $\cod(G) = \cod(\M^c L)$. If $N$ is the maximal normal subgroup of $G$, it we have $G/N \cong \M^c L$.
\end{lem}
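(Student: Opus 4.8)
The argument follows the now-familiar template of Lemmas~\ref{redS44}--\ref{redJ2}. Since $\cod(G)=\cod(\McL)$ and, by Table~1, no element of $\cod(\McL)$ is a power of a single prime, Remark~\ref{perfectgroup} shows $G$ is perfect; as $N$ is maximal normal, $G/N$ is a non-abelian simple group and $\cod(G/N)\subseteq\cod(G)$, so $|\cod(G/N)|\le|\cod(\McL)|=17$. By Lemma~\ref{list}, $G/N$ is one of the explicitly listed simple groups indexed by $|\cod(G/N)|\in\{4,5,\dots,17\}$, and the plan is to exclude every possibility except $G/N\cong\McL$ by exhibiting, in each case, an element of $\cod(G/N)$ that does not lie in $\cod(\McL)$. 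The invariants of $\cod(\McL)$ that control the casework are: the only primes dividing its members are $2,3,5,7,11$; its only nontrivial odd codegrees are $3^6\cdot5^3$ and $3^6\cdot5^3\cdot11$; its only codegrees prime to $3$ are $2^7\cdot5^3\cdot7$ and $2^7\cdot5^3\cdot11$; and the largest $2$-part and $3$-part occurring are $2^7$ and $3^6$ respectively.

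A large block of candidates falls immediately. The groups $\PSL(3,3)$, $\J_1$, $\M_{23}$, $\Sy_4(4)$, $\G_2(3)$, $\J_3$, ${}^2\F_4(2)'$, and $\PSL(4,3)$ each carry a codegree divisible by one of the primes $13,17,19,23$, none of which divides $|\McL|$, so they are eliminated at once; in particular $3^6\cdot13\in\cod(\PSL(4,3))$ disposes of the only other group with $|\cod|=17$, leaving $\McL$ alone in case~(n). The remaining sporadic and small candidates each contain a concrete codegree absent from the $\McL$ row of Table~1: for instance $3^2\cdot5\cdot7\in\cod(\PSL(3,4))\cap\cod(\PSL(4,2))$, $2^2\cdot3\cdot5\cdot7\in\cod(\A_7)$, $2^3\cdot3^2\cdot11\in\cod(\M_{11})$, $2^7\cdot7\cdot11\in\cod(\M_{22})$, $2^2\cdot3\cdot5\cdot11\in\cod(\M_{12})$, $3^4\cdot5\in\cod(\U_4(2))$, $3^6\cdot7\in\cod(\U_4(3))$, $2^3\cdot3\cdot5\cdot7\in\cod(\A_9)$, and $2^6\cdot3^3\cdot5^2\in\cod(\J_2)$, all of which I would verify are missing from $\cod(\McL)$.

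The substantive work is the parametric families. In case~(a), $G/N\cong\PSL(2,2^f)$ has unique nontrivial odd codegree $k^2-1$, which must equal $3^6\cdot5^3$ or $3^6\cdot5^3\cdot11$; since neither is one less than a perfect square, $k\notin\Z$. In case~(b), $\PSL(2,p^f)$ with $p$ odd produces the codegrees $\tfrac{k(k-\epsilon(k))}{2}$ and $k(k-\epsilon(k))$, forcing a pair of codegrees in ratio $1:2$, but every odd part occurs with a single power of $2$ in $\cod(\McL)$, so no such pair exists. For ${}^2B_2(q)$ the single odd codegree $(q-1)(q^2+1)$ would have to equal $3^6\cdot5^3$ or $3^6\cdot5^3\cdot11$, impossible as $3\nmid|{}^2B_2(q)|$. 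For ${}^2\G_2(3^{2f+1})$ (case~(h)) the codegree $q^3(q+1)$ has $3$-part $3^{6f+3}\ge3^9>3^6$, exceeding the maximal $3$-part of $\cod(\McL)$, and for $\Sy_4(2^f)$ with $f\ge3$ (case~(j)) the codegree $2q^3(q-1)^2(q+1)^2$ has $2$-part $2^{3f+1}\ge2^{10}>2^7$, exceeding the maximal $2$-part. Finally, in cases~(e) and~(f) with $G/N\cong\PSL(3,q)$ or $\PSU(3,q)$ I would isolate a codegree prime to $3$ (which exists because $q\not\equiv\pm1$), equate it to $2^7\cdot5^3\cdot7$ or $2^7\cdot5^3\cdot11$, and check that no prime power $q>4$ is a root. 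This last step is the main obstacle: because $\cod(\McL)$ has two codegrees prime to $3$ rather than one, each subcase splits into two Diophantine equations, and unlike in Lemma~\ref{redA9} the $5^3$ factor permits the cube $q^3$ to divide the right-hand side (e.g.\ at $q=5$), so the contradiction must come from the surviving factor $q^2\pm q+1$ rather than from a crude ``no cube divides'' observation. Once all alternatives are eliminated, the surviving case is $G/N\cong\McL$.
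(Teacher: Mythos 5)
Your proposal is correct and follows essentially the same route as the paper's proof: perfectness via Remark~\ref{perfectgroup}, enumeration of the simple quotient by $|\cod(G/N)|\le 17$ through Lemma~\ref{list}, and case-by-case elimination by exhibiting codegrees absent from $\cod(\McL)$ or by Diophantine contradictions, with the only deviations being cosmetic choices of witnesses (e.g.\ killing ${}^2\G_2(3^{2f+1})$ by the $3$-part bound $3^{6f+3}>3^6$ rather than the paper's equation against the codegrees prime to $3$, and grouping several sporadic cases by the primes $13,17,19,23$ not dividing $|\McL|$). Your added observation that the crude ``no cube divides'' argument fails here because $5^3\mid 2^7\cdot5^3\cdot7$, so the contradiction in cases (e)--(f) must come from the cofactor $q^2\pm q+1$, is a correct refinement of a point the paper glosses over.
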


\begin{proof}
As $G/N$ is a non-abelian simple group, $\cod(G/N) \subseteq \cod(G)$, so we have that $\cod(G/N)=4, 5, 6, 7, 8, 9, 10, 11, 12, 13, 14, 15$, or $16$.

Assume on the contrary that there exist a maximal normal subgroup $N$ such that $G/N \not \cong \M^c L, \cod(G/N) \leq 17$ and $\cod(G/N) \subseteq \cod(G)$. We exhaust all possibilities to show the previous statement is false.

\item [(a)] $|\cod(G/N)| = 4$. By Lemma \ref{list}, $G/N \cong \PSL(2,k)$ where $k = 2^f \geq 4$, so $k^2-1 \in \cod(G/N)$. For all even $k$, $k^2-1$ is odd. Therefore, $k^2-1$ must equal $3^6 \cdot 5^3$ or $3^6 \cdot 5^3 \cdot 11$ but this yields no integer roots for $k$, contradicting $\cod(G/N) \subseteq \cod(G)$.

\item[(b)] $|\cod(G/N)| = 5$. By Lemma \ref{list}, $G/N \cong \PSL(2,k)$ where $k$ is a power of an odd prime number. This implies that $\displaystyle k(k-\epsilon(k), \frac{k(k-\epsilon(k)))}{2} \in \cod(G/N)$, where $k = (-1)^{(k-1)/2}.$
However, there does not exist two elements $a, b \in \cod(\M^c L)$ with $a = 2b,$ contradicting $\cod(G/N) \subseteq \cod(G)$.

\item[(c)] $|\cod(G/N)| = 6$. By Lemma \ref{list}, $G/N \cong \PSL(3,4)$ or $G/N \cong {}^2B_2(q)$ with $q = 2^{2n+1}$.

If $G/N \cong \PSL(3,4)$, then $3^2 \cdot 5 \cdot 7 \in \cod(G/N)$ but $3^2 \cdot 5 \cdot 7 \not\in \cod(G) = \cod(\M^c L)$, contradicting $\cod(G/N) \subseteq \cod(G)$.

Furthermore, notice that $(q-1)(q^2+1) \in \cod({}^2B_2(q))$. As $q$ is even, $(q-1)(q^2+1) \equiv 1 \pmod{2}$. This means that $(q-1)(q^2+1) = 3^6 \cdot 5^3$ or $3^6 \cdot 5^3 \cdot 11$, which does not yield any roots. This contradicts $\cod(G/N) \subseteq \cod(G)$.

\item[(d)] $|\cod(G/N)| = 7$. This implies that $G/N \cong \PSL(3,3), \M_{11}, \A_7$ or $\J_1$. By inspection, the elements $2^2 \cdot 3^2 \cdot 13$, $2^3 \cdot 3^3 \cdot 11$, $2^2 \cdot 3 \cdot 5 \cdot 7$, $3 \cdot 5 \cdot 11 \cdot 19$ are in the codegrees of $\PSL(3,3)$, $\M_{11}$, $\A_7$ or $\J_1$, respectively. However, none of these elements are in the codegree set of $\M^c L$, a contradiction.

\item[(e)] $|\cod(G/N)| = 8$. By Lemma \ref{list}, we have $G/N \cong \PSU(3,q)$ with $4 < q \not \equiv -1 \pmod{3}$, $G/N \cong \PSL(3,q)$ with $4 < q \not \equiv 1 \pmod{3}$ or $G/N \cong \G_2(2)'$. Observe that the only nontrivial element not divisible by $3$ in $\cod(\M^c L)$ is $2^7 \cdot 5^3 \cdot 11$, $2^7 \cdot 5^3 \cdot 7$.

If $G/N \cong \PSU(3,q)$, notice that $(q^2-q+1)(q+1)^2(q-1), q^3(q^2-q+1) \in \cod(\PSU(3,q))$. If $q \equiv 1 \mod 3$, then $q^3(q^2-q+1) \not \equiv 0 \pmod{3}$, so it must hold that $q^3(q^2-q+1) = 2^7\cdot5^3 \cdot 11$ or $2^7 \cdot 5^3 \cdot 7$. However, this does not yield an integer root for $q$. Alternatively, if $q \equiv 0 \pmod{3}$, then $(q^2-q+1)(q+1)^2(q-1) \not \equiv 0 \pmod{3}$, so it must hold that $(q^2-q+1)(q+1)^2(q-1) = 2^7 \cdot 5^3 \cdot 11$ or $2^7 \cdot 5^3 \cdot 7$. This also does not yield an integer root for $q$. Thus $\cod(G/N) \not \subseteq \cod(G)$, a contradiction.

If $G/N \cong \PSL(3,q)$, we have $q^3(q^2+q+1) \in \cod(\PSL(3,q))$. As $q \not \equiv 1 \pmod{3}, q^3(q^2+q+1) \not \equiv 0 \pmod{3}$, so it must hold that $q^3(q^2+q+1) = 2^7 \cdot 5^3 \cdot 11$ or $q^3(q^2+q+1) = 2^7 \cdot 5^3 \cdot 7$. However, this does not lead to an integer solution for $q$. Thus $\cod(G/N) \not \subseteq \cod(G),$, a contradiction.

In the final case, $3^3 \cdot 7 \in \cod(\G_2(2)')$ but $3^3 \cdot 7 \not \in \cod(\M^c L)$, contradicting $\cod(G/N) \subseteq \cod(G)$.

\item[(f)] $|\cod(G/N)| = 9$. By Lemma \ref{list}, we have $G/N \cong \PSL(3,q)$ with $4 < q \equiv 1 \pmod{3}$ or $G/N \cong \PSU(3,q)$ with $4 < q \equiv -1 \pmod{3}$. Both $q^2 - q + 1$ and $q^2 + q + 1$ are not divisible by $9$.

In the first case, we have $\frac{1}{3}q^3(q^2+q+1) \in \cod(\PSL(3,q))$. Thus, $\frac{1}{3}q^3(q^2+q+1) \not \equiv 0 \pmod{3}$, so we must have $\frac{1}{3}q^3(q^2+q+1) = 2^7 \cdot 5^3 \cdot 11$ or $2^7 \cdot 5^3 \cdot 7$, which does not yield an integer root for $q$, a contradiction.

In the second scenario, we have  $\frac{1}{3}q^3(q^2-q+1) \in \cod(\PSU(3,q))$. As with the first scenario, this element is not divisible by $3$, so we must have $\frac{1}{3}q^3(q^2-q+1) = 2^7 \cdot 5^3 \cdot 11$ or $2^7 \cdot 5^3 \cdot 7$. However, this does not yield an integer root for $q$, a contradiction.

\item[(g)] $|\cod(G/N)| = 10$. By Lemma \ref{list}, we have $G/N \cong \M_{22}$. However, $2^7 \cdot 3 \cdot 5 \cdot 11$ is only present in $\cod(\M_{22})$ and not in $\cod(\M^c L)$, contradicting $\cod(G/N) \subseteq \cod(G)$.

\item[(h)] $|\cod(G/N)| = 11$. By Lemma \ref{list}, we have $G/N \cong \PSL(4,2)$, $G/N \cong \M_{12}$, $G/N \cong \M_{23}$ or $G/N \cong {}^{2}\G_{2}(q)$ for $q = 3^{2f+1}, f \geq 1$. The codegree sets of $\PSL(4,2), \M_{12}$ and $\M_{23}$ contain $3^2 \cdot 5 \cdot 7$, $2^2 \cdot 3 \cdot 5 \cdot 11$ and $2^7 \cdot 7 \cdot 11$ respectively, none of which are elements of the codegree set of $\M^c L$.

In the other case, note that $(q^2-1)(q^2-q+1) \in \cod({}^{2}\G_{2}(q))$. However, $(q^2-1)(q^2-q+1) \not \equiv 0 \pmod{3}$ in this case, so we must have $(q^2-1)(q^2-q+1) = 2^7 \cdot 5^3 \cdot 7$, $2^7 \cdot 5^3 \cdot 11$ yet this does not yield an integer root for $q$. This is a contradiction to $\cod(G/N) \subseteq \cod(G)$.

\item[(i)] $|\cod(G/N)| = 12$. By Lemma \ref{list}, we have $G/N \cong \Sy_4(4)$. However, the element $3^2 \cdot 5^2 \cdot 17$ is in the codegree set of $\Sy_4(4)$ but not in the codegree set of $\M^c L$, contradicting $\cod(G/N) \subseteq \cod(G)$.

\item[(j)] $|\cod(G/N)| = 13$. By Lemma \ref{list}, we have $G/N \cong \U_4(2)$ or $G/N \cong \Sy_4(q)$ for $q = 2^f > 4$.
In the first case, the element $3^4 \cdot 5 \in \cod(\U_4(2))$ but it is not in $\cod(\M^c L)$.

In the second case, $2q^3(q-1)^2(q+1)^2 \in \cod(\Sy_4(q))$. However, as $\nu_2(q) > 2$, then $\nu_2(2q^3(q-1)^2(q+1)^2) > 7$ yet there does not exist an element in $\cod(\M^c L)$ that is divisible by $2^8$, a contradiction.

\item[(k)] $|\cod(G/N)| = 14$. By Lemma \ref{list}, we have $G/N \cong \U_4(3)$, $G/N \cong {}^2\F_4(2)'$ or $G/N \cong \J_3$. However, the codegree sets of $\U_4(3)$, ${}^2\F_4(2)'$ and $\J_3$ contain $2^7 \cdot 3^5 \cdot 5$, $2^{10} \cdot 3^3 \cdot 5^2$ and $2^7 \cdot 3^5 \cdot 19$ respectively, but these are not in $\cod(\M^c L)$. This contradicts $\cod(G/N) \subseteq \cod(G)$.

\item[(l)] $|\cod(G/N)| = 15$. By Lemma \ref{list}, we have $G/N \cong \G_2(3)$. However, the element $2^5 \cdot 3^6 \cdot 13$ is only present in the codegree set of $\G_2(3)$ and not in the codegree set of $\M^c L$, which contradicts $\cod(G/N) \subseteq \cod(G)$.

\item[(m)] $|\cod(G/N)| = 16$. By Lemma \ref{list}, we have $G/N \cong \A_9$ or $G/N \cong \J_2$. In the first scenario, $2^4 \cdot 3^3 \cdot 5 \in \cod(\A_9)$ but is not in $\cod(\M^c L)$, while in the second $2^6 \cdot 3^3 \cdot 5^2 \in \cod(\J_2)$, but this element is not in $\cod(\M^c L)$. This contradicts $\cod(G/N) \subseteq \cod(G)$.

\item[(n)] $|\cod(G/N)| = 17$. Then $G/N \cong \PSL(4,3)$. However, the element $2^6 \cdot 3^6 \cdot 5$ is only in $\cod(\PSL(4,3))$ and not the codegree of $\M^c L$, a contradiction.

Having ruled out all other cases, we conclude $G/N \cong \M^c L$.
\end{proof}

\begin{lem}\label{redL43}
Let $G$ be a finite group with $\cod(G) = \cod(\PSL(4,3))$. If $N$ is the maximal normal subgroup of $G$, it we have $G/N \cong \PSL(4,3)$.
\end{lem}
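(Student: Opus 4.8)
The plan is to follow the template of the reduction Lemmas \ref{redS44}--\ref{redMcL} step for step. Since $\cod(G)=\cod(\PSL(4,3))$, Remark \ref{perfectgroup} shows that $G$ is perfect, so the maximal normal subgroup $N$ has non-abelian simple quotient $G/N$. Because the codegrees of a quotient are codegrees of $G$, we have $\cod(G/N)\subseteq\cod(\PSL(4,3))$ and hence $|\cod(G/N)|\le 17$. Lemma \ref{list} then confines $G/N$ to the finite list of simple groups with $4\le|\cod(G/N)|\le 17$, and I would run through each value of $|\cod(G/N)|$ in turn, proving that every candidate except $\PSL(4,3)$ violates $\cod(G/N)\subseteq\cod(G)$.

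Reading off Table 1, the three features of $\cod(\PSL(4,3))$ I would exploit are: (i) every codegree is divisible only by the primes $2,3,5,13$; (ii) $3^6\cdot 13$ is the unique nontrivial odd codegree; and (iii) $2^7$ is the largest power of $2$ dividing a codegree. Invariant (i) disposes at once of every candidate whose codegree set contains a prime from $\{7,11,17,19,23,31\}$: by inspection of Table 1 this eliminates $\PSL(3,4)$, $\A_7$, $\M_{11}$, $\J_1$, $\G_2(2)'$, $\M_{22}$, $\PSL(4,2)$, $\M_{12}$, $\M_{23}$, $\Sy_4(4)$, $\U_4(3)$, $\J_3$, $\G_2(3)$, $\A_9$ and $\J_2$. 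Invariant (iii) then removes the two prime-clean but $2$-heavy candidates: ${}^2\F_4(2)'$ (whose codegree $2^{11}\cdot 5^2\cdot 13$ carries $2^{11}$) and $\Sy_4(q)$ with $q=2^f>4$ (whose codegree $2q^3(q-1)^2(q+1)^2$ has $2$-part $2^{3f+1}\ge 2^{10}$), both exceeding $2^7$.

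The remaining candidates I would treat exactly as in the earlier lemmas. Invariant (ii) clears several: for $\PSL(2,2^f)$ the odd codegree $k^2-1$ would have to equal $3^6\cdot 13$, impossible as $3^6\cdot 13+1$ is not a square; ${}^2B_2(q)$ is out because $3\nmid|{}^2B_2(q)|$ contradicts the factor $3^6$; and $\PSL(3,3)$ (odd codegree $3^3\cdot 13$) and $\U_4(2)$ (odd codegree $3^4\cdot 5$) fail (ii) since neither equals $3^6\cdot 13$. For $\PSL(2,p^f)$ with $p$ odd I would list the finitely many pairs $\{b,2b\}\subseteq\cod(\PSL(4,3))$ and set $k(k-\epsilon(k))/2$ equal to each smaller member, checking that no odd prime power $k$ emerges. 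For the Lie-type families $\PSL(3,q)$, $\PSU(3,q)$ and ${}^2\G_2(q)$ I would isolate a distinguished codegree of the candidate (its unique codegree prime to $3$, or its unique nontrivial odd codegree) and equate it with the corresponding distinguished codegree of $\PSL(4,3)$, namely $2^7\cdot 5\cdot 13$ or $3^6\cdot 13$, reducing to a polynomial equation in $q$ that admits no prime-power root.

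The case that superficially looks like the main obstacle is the terminal one $|\cod(G/N)|=17$, where Lemma \ref{list} permits $G/N\cong\PSL(4,3)$ or $G/N\cong\McL$, and the coarse invariants do not separate them since the $2$-parts of $\cod(\McL)$ are also at most $2^7$. It resolves immediately, however: $\McL$ has codegrees such as $2^7\cdot 5^3\cdot 7$ and $2^6\cdot 3^6\cdot 11$ (Table 1), each divisible by a prime ($7$ or $11$) outside $\{2,3,5,13\}$, so $\cod(\McL)\not\subseteq\cod(\PSL(4,3))$ and $G/N\cong\PSL(4,3)$ follows. The genuine labour is therefore not this separation but the arithmetic in the Lie-type family cases; each, however, reduces to a Diophantine nonexistence of exactly the form already settled in Lemmas \ref{redU43}--\ref{redMcL}.
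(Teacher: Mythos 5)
Your proposal is correct and follows essentially the same route as the paper: reduce via perfectness and Lemma \ref{list} to the candidates with at most $17$ codegrees, then eliminate each non-$\PSL(4,3)$ candidate by exhibiting a codegree (or a Diophantine condition on a parametrized family's distinguished codegree) incompatible with $\cod(\PSL(4,3))$, exactly as the paper does case by case. Your batching of the exclusions by the three invariants (prime divisors $\{2,3,5,13\}$, unique odd codegree $3^6\cdot 13$, maximal $2$-part $2^7$) is only a more economical organization of the same element-wise checks -- and, as a side benefit, your comparison of ${}^2\G_2(q)$ and $\Sy_4(q)$ codegrees against $\cod(\PSL(4,3))$ rather than $\cod(\McL)$ is what the paper's cases (h) and (j) evidently intended.
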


\begin{proof}
 As $G/N$ is a non-abelian simple group, $\cod(G/N) \subseteq \cod(G)$, so we have that $\cod(G/N)=4, 5, 6, 7, 8, 9, 10, 11, 12, 13, 14, 15, 16$, or $17$.

Assume on the contrary that there exist a maximal normal subgroup $N$ such that $G/N \not \cong \PSL(4,3), \cod(G/N) \leq 17$ and $\cod(G/N) \subseteq \cod(G)$. We exhaust all possibilities to show the previous statement is false.

\item [(a)] $|\cod(G/N)| = 4$. By Lemma \ref{list}, $G/N \cong \PSL(2,k)$ where $k = 2^f \geq 4$, so $k^2-1 \in \cod(G/N)$. For all even $k$, $k^2-1$ is odd. Therefore, $k^2-1$ must equal $3^6 \cdot 13$ but this yields no integer roots for $k$, contradicting $\cod(G/N) \subseteq \cod(G)$.

\item[(b)] $|\cod(G/N)| = 5$. By Lemma \ref{list}, $G/N \cong \PSL(2,k)$ where $k$ is a power of an odd prime number. This implies that $\displaystyle k(k-\epsilon(k), \frac{k(k-\epsilon(k)))}{2} \in \cod(G/N)$, where $k = (-1)^{(k-1)/2}$.
Then $k(k-\epsilon(k))/2$, $k(k-\epsilon(k))\in \cod(G)$. By a complete search, and we have that $\frac{k(k\pm1)}{2}=2^5\cdot3^6\cdot 5$, $2^5\cdot3^5$ or $2^5\cdot3^4\cdot5$ since they are half of the other three elements. This is a contradiction since $k\in \Z$.

\item[(c)] $|\cod(G/N)| = 6$. By Lemma \ref{list}, $G/N \cong \PSL(3,4)$ or $G/N \cong {}^2B_2(q)$, with $q = 2^{2n+1}$.

If $G/N \cong \PSL(3,4)$ then $3^2 \cdot 5 \cdot 7 \in \cod(\PSL(3,4))$ but $3^2 \cdot 5 \cdot 7 \not\in \cod(G) = \cod(\PSL(4,3))$, contradicting $\cod(G/N) \subseteq \cod(G)$.

Furthermore, notice that $(q-1)(q^2+1) \in \cod({}^2B_2(q))$. As $q$ is even, $(q-1)(q^2+1) \equiv 1 \pmod{2}$. This means that $(q-1)(q^2+1) = 3^6 \cdot 13$ which does not yield any roots. This contradicts $\cod(G/N) \subseteq \cod(G)$.

\item[(d)] $|\cod(G/N)| = 7$. By Lemma \ref{list}, we have $G/N \cong \PSL(3,3)$, $\M_{11}$, $\A_7$ or $\J_1$. By inspection, the elements $2^2 \cdot 3^2 \cdot 13$, $2^3 \cdot 3^3 \cdot 11$, $2^2 \cdot 3 \cdot 5 \cdot 7$, $3 \cdot 5 \cdot 11 \cdot 19$ are in the codegrees of $\PSL(3,3)$, $\M_{11}$, $\A_7$ or $\J_1$ respectively. However, none of these elements are in the codegree set of $\PSL(4,3)$, a contradiction.

\item[(e)] $|\cod(G/N)| = 8$. By Lemma \ref{list}, we have $G/N \cong \PSU(3,q)$ with $4 < q \not \equiv -1 \pmod{3}$, $G/N \cong \PSL(3,q)$ with $4 < q \not \equiv 1 \pmod{3}$ or $G/N \cong \G_2(2)'$. There are no nontrivial elements in the codegree set of $\PSL(4,3)$ that are not divisible by $3$ except $2^7\cdot5\cdot13$.

If $G/N \cong \PSU(3,q)$, notice that $(q^2-q+1)(q+1)^2(q-1), q^3(q^2-q+1) \in \cod(\PSU(3,q))$. If $q \equiv 1 \mod 3$, in this case, we must have $q^3(q^2-q+1)$ yet this is the nontrivial odd codegree, this does not yield an integer root for $q$. This is a contradiction to $\cod(G/N) \subseteq \cod(G)$. Alternatively, if $q \equiv 0 \pmod{3}$, then $(q^2-q+1)(q+1)^2(q-1) \not \equiv 0 \pmod{3}$. We have that $(q^2-q+1)(q+1)^2(q-1)=2^7\cdot5\cdot13$, this does not yield an integer root for $q$. Thus $\cod(G/N) \not \subseteq \cod(G)$, a contradiction.

If $G/N \cong \PSL(3,q)$, we have $q^3(q^2+q+1) \in \cod(\PSL(3,q))$. As $q \not \equiv 1 \pmod{3}, q^3(q^2+q+1) \not \equiv 0 \pmod{3}$. We can also get the same contradiction as above. Thus $\cod(G/N) \not \subseteq \cod(G)$.

In the final case, $3^3 \cdot 7 \in \cod(\G_2(2)')$ but $3^3 \cdot 7 \not \in \cod(\PSL(4,3))$, contradicting $\cod(G/N) \subseteq \cod(G)$.

\item[(f)] $|\cod(G/N)| = 9$. By Lemma \ref{list}, we have $G/N \cong \PSL(3,q)$ with $4 < q \equiv 1 \pmod{3}$ or $G/N \cong \PSU(3,q)$ with $4 < q \equiv -1 \pmod{3}$. Both $q^2 - q + 1$ and $q^2 + q + 1$ are not divisible by $9$.

In the first case, we have $\frac{1}{3}q^3(q^2+q+1) \in \cod(\PSL(3,q))$. Thus $\frac{1}{3}q^3(q^2+q+1) \not \equiv 0 \pmod{3}$, we must have $\frac{1}{3}q^3(q^2+q+1)$ is the odd codegree, yet this does not yield an integer root for $q$, contradiction.

In the second scenario, we have  $\frac{1}{3}q^3(q^2-q+1) \in \cod(\PSU(3,q))$. This is not divisible by $3$, we can get the same contradiction as above.

\item[(g)] $|\cod(G/N)| = 10$. By Lemma \ref{list}, we have $G/N \cong \M_{22}.$ However, $2^7 \cdot 3 \cdot 5 \cdot 11$ is only present in $\cod(\M_{22})$ and not in $\cod(\PSL(4,3))$, contradicting $\cod(G/N) \subseteq \cod(G)$.

\item[(h)] $|\cod(G/N)| = 11$. By Lemma \ref{list}, we have $G/N \cong \PSL(4,2)$, $G/N \cong \M_{12}$, $G/N \cong \M_{23}$ or $G/N \cong {}^{2}\G_{2}(q)$ for $q = 3^{2f+1}, f \geq 1$. The codegree sets of $\PSL(4,2)$, $\M_{12}$ and $\M_{23}$ contain $3^2 \cdot 5 \cdot 7$, $2^2 \cdot 3 \cdot 5 \cdot 11$ and $2^7 \cdot 7 \cdot 11$ respectively, none of which are elements of the codegree set of $\PSL(4,3)$.

If $G/N \cong {}^{2}\G_{2}(q)$,  $(q^2-1)(q^2-q+1) \in \cod({}^{2}\G_{2}(q))$. However, $(q^2-1)(q^2-q+1) \not \equiv 0 \pmod{3}$ in this case, so we must have $(q^2-1)(q^2-q+1) = 2^7 \cdot 5^3 \cdot 7, 2^7 \cdot 5^3 \cdot 11$ yet this does not yield an integer root for $q$. This is a contradiction to $\cod(G/N) \subseteq \cod(G)$.

\item[(i)] $|\cod(G/N)| = 12$. By Lemma \ref{list}, we have $G/N \cong \Sy_4(4)$. However, $17$ is a factor of $|\cod(\Sy_4(4))|$ and not $|\cod(\PSL(4,3))|$, contradicting $\cod(G/N) \subseteq \cod(G)$.

\item[(j)] $|\cod(G/N)| = 13$. By Lemma \ref{list}, we have $G/N \cong \U_4(2)$ or $G/N \cong \Sy_4(q)$ for $q = 2^f > 4$.

In the first case, the element $3^4 \cdot 5 \in \cod(\U_4(2))$ but it is not in $\cod(\PSL(4,3))$.

In the second case, $2q^3(q-1)^2(q+1)^2 \in \cod(\Sy_4(q))$. However, as $\nu_2(q) > 2$, then $\nu_2(2q^3(q-1)^2(q+1)^2) > 7$ yet there does not exist an element in $\cod(\M^c L)$ that is divisible by $2^8$, a contradiction.

\item[(k)] $|\cod(G/N)| = 14$. By Lemma \ref{list}, we have $G/N \cong \U_4(3)$, $G/N \cong {}^2\F_4(2)'$ or $G/N \cong \J_3$. However, the codegree sets of $\U_4(3)$, ${}^2\F_4(2)'$ and $\J_3$ have $2^4 \cdot 3^6$, $2^{10} \cdot 3^3 \cdot 5^2$ and $2^7 \cdot 3^5 \cdot 19$ respectively, but these are not in $\cod(\PSL(4,3))$. This contradicts $\cod(G/N) \subseteq \cod(G)$.

\item[(l)] $|\cod(G/N)| = 15$. By Lemma \ref{list}, we have $G/N \cong \G_2(3)$. However, the element $2^5 \cdot 3^6 \cdot 13$ is only present in the codegree set of $\G_2(3)$ and not in the codegree set of $\PSL(4,3)$, which contradicts $\cod(G/N) \subseteq \cod(G)$.

\item[(m)] $|\cod(G/N)| = 16$. By Lemma \ref{list}, we have $G/N \cong \A_9$ or $G/N \cong \J_2$. In the $\A_9$ case, $2^4 \cdot 3^3 \cdot 5 \in \cod(\A_9)$ but is not in $\cod(\PSL(4,3))$, while in the $\J_2$ case $2^6 \cdot 3^3 \cdot 5^2 \in \cod(\J_2)$ but this element is not in $\cod(\PSL(4,3))$. This contradicts $\cod(G/N) \subseteq \cod(G)$.

\item[(n)] $|\cod(G/N)| = 17$. Then $G/N \cong \M^c L$. However, the element $2^7 \cdot 5^3 \cdot 11$ is only in $\cod(\M^c L)$ and not the codegree of $\PSL(4,3)$, a contradiction.

Having ruled out all other cases, we conclude $G/N \cong \PSL(4,3)$.
\end{proof}

\begin{lem} \label{redS45}
Let $G$ be a finite group with $\cod(G) = \cod(\Sy_4(5))$. If $N$ is the maximal normal subgroup of $G$, it we have $G/N \cong \Sy_4(5)$.
\end{lem}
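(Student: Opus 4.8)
The plan is to follow the template established by the preceding reduction lemmas. By Remark~\ref{perfectgroup}, since $\cod(G)=\cod(\Sy_4(5))$ contains no prime power, $G$ is perfect, so for a maximal normal subgroup $N$ the quotient $G/N$ is a non-abelian simple group with $\cod(G/N)\subseteq\cod(G)$. As $|\cod(\Sy_4(5))|=18$, Lemma~\ref{list} forces $G/N$ into cases (a)--(o), i.e.\ $|\cod(G/N)|\in\{4,5,\dots,18\}$, and I would eliminate every possibility except $\Sy_4(5)$ itself; in case~(o) the relation $\cod(G/N)=\cod(G)$ then yields $G/N\cong\Sy_4(5)$ directly.

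The arithmetic backbone is $|\Sy_4(5)|=2^6\cdot 3^2\cdot 5^4\cdot 13$ together with four facts about $\cod(\Sy_4(5))$: every codegree is a $\{2,3,5,13\}$-number; the largest $2$-part that occurs is $2^6$; the unique nontrivial odd codegree is $5^4\cdot 13$; and the only nontrivial codegrees coprime to $3$ are $2^5\cdot 5^3\cdot 13$ and $5^4\cdot 13$. These dispatch most candidates at once. Any simple group whose order is divisible by a prime outside $\{2,3,5,13\}$ carries a codegree divisible by that prime and so violates $\cod(G/N)\subseteq\cod(\Sy_4(5))$; this removes $\PSL(3,4)$, $\A_7$, $\M_{11}$, $\J_1$, $\G_2(2)'$, $\M_{22}$, $\PSL(4,2)$, $\M_{12}$, $\M_{23}$, $\J_3$, $\G_2(3)$, $\A_9$, $\J_2$, $\McL$, the group $\Sy_4(4)$ (via the prime $17$ in $3^2\cdot 5^2\cdot 17$), and, in case~(o), both $\G_2(4)$ and $\HS$ (via the prime $7$). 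Candidates whose $2$-part exceeds $2^6$, namely ${}^2\F_4(2)'$, $\PSL(4,3)$ and $\U_4(3)$, fall to a comparison of $2$-parts. The two remaining ``small'' candidates $\PSL(3,3)$ and $\U_4(2)$ are eliminated by their unique nontrivial odd codegrees $3^3\cdot 13$ and $3^4\cdot 5$, neither of which equals $5^4\cdot 13$.

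The generic families require genuine Diophantine checks. For $\PSL(2,2^f)$ (case a) the odd codegree $k^2-1$ would have to equal $5^4\cdot 13=8125$, yet $8126$ is not a perfect square. For $\PSL(2,p^f)$ with $p$ odd (case b), the codegree set contains a pair $b,2b$ with $b=\tfrac{k(k\pm1)}{2}$; I would enumerate the finitely many pairs $(b,2b)$ inside $\cod(\Sy_4(5))$ and check that $k(k\pm1)=2b$ has no integer root in any of them. For $\Sy_4(s)$ with $s=2^f>4$ (case j, and the $\Sy_4(4)$ comparison), the codegree $2s^3(s-1)^2(s+1)^2$ has $2$-part $2^{3f+1}\ge 2^{10}>2^6$. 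The main obstacle is the pair of families $\PSL(3,q)$ and $\PSU(3,q)$ in cases (e) and (f): splitting on $q$ modulo $3$, I would isolate the unique codegree coprime to $3$ in each subcase, equate it with $2^5\cdot 5^3\cdot 13$ or $5^4\cdot 13$, and rule out integer $q>4$; the leverage is that the cube and high-power structure of these two targets is very restrictive---the only prime cube dividing either is $5^3$, and $q=5$ fails the relevant congruence and leaves the wrong cofactor.

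One case demands an adjustment of the earlier template, namely the Suzuki groups in case~(c). There the unique odd codegree $(q-1)(q^2+1)$ of ${}^2B_2(q)$ must equal $5^4\cdot 13$, but unlike in the previous lemmas the target is coprime to $3$, so the shortcut ``$3\nmid|{}^2B_2(q)|$'' is unavailable. Instead I would note that $q-1=2^{2f+1}-1$ is coprime to $5$ (the order of $2$ modulo $5$ is $4$, which never divides the odd exponent $2f+1$), whence $q-1$ must divide $13$; then $q-1\in\{1,13\}$ forces $q\in\{2,14\}$, neither a valid Suzuki parameter. Likewise the Ree groups ${}^2\G_2(3^{2f+1})$ in case~(h) have odd codegree $3^{5f+3}(q^2-q+1)$, divisible by $3^8$, which cannot equal $5^4\cdot 13$. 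Once all of (a)--(n) and the two intruders $\G_2(4),\HS$ in~(o) are excluded, only $G/N\cong\Sy_4(5)$ survives.
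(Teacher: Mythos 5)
Your proposal is correct, and it keeps the paper's skeleton (perfectness via Remark \ref{perfectgroup}, hence $G/N$ simple with $\cod(G/N)\subseteq\cod(G)$, then casework over Lemma \ref{list}), but the internal eliminations are run differently, and in places better, than the paper's. Where the paper checks each group by exhibiting one specific codegree not lying in $\cod(\Sy_4(5))$, you use two uniform criteria: a prime-divisor test (any prime dividing $|S|$ divides some codegree of $S$) and the $2$-part bound $2^6$; these dispatch all the sporadic/small cases and ${}^2\F_4(2)'$, $\PSL(4,3)$, $\U_4(3)$ at once. The prime-divisor claim does need a one-line justification you did not supply --- either note that if every nontrivial $\chi\in\irr(S)$ had $\chi(1)_p=|S|_p$ then $|S|=1+\sum\chi(1)^2\equiv 1\pmod{|S|_p^2}$, contradicting $|S|_p\mid |S|$, or simply observe it by inspection of Table 1 for each group concerned. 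Two of your case arguments are genuine improvements over the paper: for ${}^2B_2(q)$ the paper merely asserts that $(q-1)(q^2+1)=5^4\cdot 13$ ``does not yield any roots,'' whereas you prove it ($\gcd(2^{2f+1}-1,5)=1$ forces $q-1\mid 13$); and in cases (e), (f), (h) you correctly identify that $\cod(\Sy_4(5))$ has \emph{two} nontrivial codegrees coprime to $3$, namely $2^5\cdot 5^3\cdot 13$ and $5^4\cdot 13$, while the paper's proof claims $5^4\cdot 13$ is the only one (overlooking $2^5\cdot 5^3\cdot 13$), so your version actually repairs a small oversight. Two quibbles: your statement that ``the only prime cube dividing either target is $5^3$'' is literally false ($2^3$ divides $2^5\cdot 5^3\cdot 13$), though the intended conclusion (the only prime power $q>4$ with $q^3$ dividing either target is $q=5$) is what your argument uses and is correct; and in the subcases $q\equiv 0\pmod 3$ of (e)/(f) the codegree coprime to $3$ has the form $(q^2\mp q+1)(q\pm 1)^2(q\mp 1)$ rather than $q^3(\cdots)$, so there the elimination rests on a size/growth check rather than the cube structure --- worth saying explicitly, but both go through.
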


\begin{proof}
As $G/N$ is a non-abelian simple group, $\cod(G/N) \subseteq \cod(G)$, so we have that $\cod(G/N)=4, 5, 6, 7, 8, 9, 10, 11, 12, 13, 14, 15, 16, 17$, or $18$.

Assume on the contrary that there exist a maximal normal subgroup $N$ such that $G/N \not \cong \Sy_4(5), \cod(G/N) \leq 18$ and $\cod(G/N) \subseteq \cod(G)$. We exhaust all possibilities.

\item[(a)] Suppose $|\cod(G/N)| = 4$. By Lemma \ref{list}, $G/N \cong \PSL(2,k)$ where $k = 2^f \geq 4$, so $k^2-1 \in \cod(G/N)$. For all even $k$, $k^2-1$ is odd. $k^2-1$ must equal $5^4 \cdot13$ but this yields no integer roots for $k$, contradicting $\cod(G/N) \subseteq \cod(G)$.

\item[(b)] Suppose $|\cod(G/N)| = 5$. By Lemma \ref{list}, $G/N \cong \PSL(2,k)$ where $k$ is a power of an odd prime number. This implies that $\displaystyle k(k-\epsilon(k), \frac{k(k-\epsilon(k)))}{2} \in \cod(G/N)$, where $k = (-1)^{(k-1)/2}$. By a complete search, and we have that $\frac{k(k\pm1)}{2}=2^4\cdot3\cdot 5^4$, $2^4\cdot3\cdot5^3$ or $2^2\cdot3^2\cdot5^4$ since they are half of the other three elements. This is a contradiction since $k\in \Z$.

\item[(c)] $|\cod(G/N)|=6$. By Lemma \ref{list}, $G/N \cong \PSL(3,4)$ or $G/N \cong {}^2B_2(q)$, with $q = 2^{2n+1}$.

If $G/N \cong \PSL(3,4)$ then $3^2 \cdot 5 \cdot 7 \in \cod(\PSL(3,4))$, but $3^2 \cdot 5 \cdot 7 \not\in \cod(\Sy_4(5))$, contradicting $\cod(G/N) \subseteq \cod(G)$.
Furthermore, notice that $(q-1)(q^2+1) \in \cod({}^2B_2(q))$. As $q$ is even, $(q-1)(q^2+1) \equiv 1 \pmod{2}$. This means that $(q-1)(q^2+1) = 5^4 \cdot 13$ which does not yield any roots, contradicting $\cod(G/N) \subseteq \cod(G)$.

\item[(d)] $|\cod(G/N)| = 7$. By Lemma \ref{list}, we have $G/N \cong \PSL(3,3)$, $\M_{11}$, $\A_7$ or $\J_1$. The elements $2^2 \cdot 3^2 \cdot 13$, $2^3 \cdot 3^3 \cdot 11$, $2^2 \cdot 3 \cdot 5 \cdot 7$, $3 \cdot 5 \cdot 11 \cdot 19$ are in the codegrees of $\PSL(3,3)$, $\M_{11}$, $\A_7$ or $\J_1$ respectively. However, none of these elements are in the codegree set of $\Sy_4(5)$, a contradiction.

\item[(e)] $|\cod(G/N)| = 8$. By Lemma \ref{list}, we have $G/N \cong \PSU(3,q)$ with $4 < q \not \equiv -1 \pmod{3}$, $G/N \cong \PSL(3,q)$ with $4 < q \not \equiv 1 \pmod{3}$ or $G/N \cong \G_2(2)'$.

The only nontrivial element of $\Sy_4(5)$ not divisible by $3$ is $5^4 \cdot 13$.

If $G/N \cong \PSU(3,q)$, notice that $(q^2-q+1)(q+1)^2(q-1), q^3(q^2-q+1) \in \cod(\PSU(3,q))$. If $q \equiv 1 \mod 3$, then $q^3(q^2-q+1) \not \equiv 0 \pmod{3}$. Alternatively, if $q \equiv 0 \pmod{3}$, then $(q^2-q+1)(q+1)^2(q-1) \not \equiv 0 \pmod{3}$. However,$(q^2-q+1)(q+1)^2(q-1)=5^4 \cdot 13$ yield no integer roots for $q$. Thus, $\cod(G/N) \not \subseteq \cod(G)$, a contradiction.

If $G/N \cong \PSL(3,q)$, we have $q^3(q^2+q+1) \in \cod(\PSL(3,q))$. As $q \not \equiv 1 \pmod{3}, q^3(q^2+q+1) \not \equiv 0 \pmod{3}$. However, $q^3(q^2+q+1) = 5^4 \cdot 13$ has no integer roots. Thus, $\cod(G/N) \not \subseteq \cod(G)$, a contradiction.

In the final case, $3^3 \cdot 7 \in \cod(\G_2(2)')$ but $3^3 \cdot 7 \not \in \cod(\Sy_4(5))$, contradicting $\cod(G/N) \subseteq \cod(G)$.

\item[(f)] $|\cod(G/N)| = 9$. By Lemma \ref{list}, we have $G/N \cong \PSL(3,q)$ with $4 < q \equiv 1 \pmod{3}$ or $G/N \cong \PSU(3,q)$ with $4 < q \equiv -1 \pmod{3}$.

Both $q^2 - q + 1$ and $q^2 + q + 1$ are not divisible by $9$.

In the first case, we have $\frac{1}{3}q^3(q^2+q+1) \in \cod(\PSL(3,q))$. Thus, $\frac{1}{3}q^3(q^2+q+1) \not \equiv 0 \pmod{3}$. Furthermore, we have $\frac{1}{3}q^3(q^2-q+1) \in \cod(\PSU(3,q))$. This is not divisible by $3$.

The equation $\frac{1}{3}q^3(q^2\pm q+1) = 5^4 \cdot 13$ has no integer roots, contradicting $\cod(G/N) \subseteq \cod(G)$.

\item[(g)] $|\cod(G/N)| = 10$. By Lemma \ref{list}, we have $G/N \cong \M_{22}$. However, $2^7 \cdot 3 \cdot 5 \cdot 11$ is only present in $\cod(\M_{22})$ and not in $\cod(\Sy_4(5))$, contradicting $\cod(G/N) \subseteq \cod(G)$.

\item[(h)] $|\cod(G/N)| = 11$. By Lemma \ref{list}, we have $G/N \cong \PSL(4,2)$, $G/N \cong \M_{12}$, $G/N \cong \M_{23}$ or $G/N \cong {}^{2}\G_{2}(q)$ for $q = 3^{2f+1}, f \geq 1$. The codegree sets of $\PSL(4,2)$, $\M_{12}$ and $\M_{23}$ contain $3^2 \cdot 5 \cdot 7$, $2^2 \cdot 3 \cdot 5 \cdot 11$ and $2^7 \cdot 7 \cdot 11$, respectively, none of which are elements of the codegree set of $\Sy_4(5)$.

If $G/N \cong  {}^{2}\G_{2}(q)$, $(q^2-1)(q^2-q+1) \in \cod({}^{2}\G_{2}(q))$. However, $(q^2-1)(q^2-q+1) \not \equiv 0 \pmod{3}$. In this case, so we must have $(q^2-1)(q^2-q+1) = 5^4 \cdot 13$ yet this does not yield an integer root for $q$. This is a contradiction to $\cod(G/N) \subseteq \cod(G)$.

\item[(i)] $|\cod(G/N)| = 12$. By Lemma \ref{list}, we have $G/N \cong \Sy_4(4)$. However, $17$ is a factor of $|\cod(\Sy_4(4))|$ and not  $|\cod(\Sy_4(5))|$, contradicting $\cod(G/N) \subseteq \cod(G)$.

\item[(j)] $|\cod(G/N)| = 13$. By Lemma \ref{list}, we have $G/N \cong \U_4(2)$ or $G/N \cong \Sy_4(q)$ for $q = 2^f > 4$.

In the first case, the element $3^4 \cdot 5 \in \cod(\Sy_4(q))$ but it is not in $\cod(\Sy_4(5))$.

In the second case, $2q^3(q-1)^2(q+1)^2 \in \cod(\U_4(2))$. However, as $\nu_2(q) > 2$, then $\nu_2(2q^3(q-1)^2(q+1)^2) > 7$ yet there does not exist an element in $\cod(\M^c L)$ that is divisible by $2^8$, a contradiction.

\item[(k)] $|\cod(G/N)| = 14$. By Lemma \ref{list}, we have $G/N \cong \U_4(3)$, $G/N \cong {}^2\F_4(2)'$ or $G/N \cong \J_3$. However, the codegree sets of $\U_4(3)$, ${}^2\F_4(2)'$ and $\J_3$ have $2^4 \cdot 3^6$, $2^{10} \cdot 3^3 \cdot 5^2$ and $2^7 \cdot 3^5 \cdot 19$, respectively, but these are not in $\cod(\Sy_4(5))$. This contradicts $\cod(G/N) \subseteq \cod(G)$.

\item[(l)] $|\cod(G/N)| = 15$. By Lemma \ref{list}, we have $G/N \cong \G_2(3)$. However, the element $2^5 \cdot 3^6 \cdot 13$ is only present in the codegree set of $\G_2(3)$ and not in the codegree set of $\Sy_4(5)$, which contradicts $\cod(G/N) \subseteq \cod(G)$.

\item[(m)] $|\cod(G/N)| = 16$. By Lemma \ref{list}, we have $G/N \cong \A_9$ or $G/N \cong \J_2$.

In the $\A_9$ case, $7$ is a factor of $|\cod(\A_9)|$ and not $|\cod(\Sy_4(5))|$, contradicting $\cod(G/N) \subseteq \cod(G)$.

while in the $\J_2$ case, this situation is the same as the contradiction above. $7$ is not the factor of $|\cod(\Sy_4(5))|$.

\item[(n)] $|\cod(G/N)| = 17$. Then $G/N \cong \M^c L$ or $\PSL(4,3)$ However, there are three nontrivial odd codegrees in $\cod \M^c L$, there is only a nontrivial odd element in the set of $\Sy_4(5)$, a contradiction. Additionally, the element $3^6\cdot 13$ is only in $\cod(\PSL(4,3))$ and not $\cod(\Sy_4(5))$, a contradiction.

\item[(o)] $|\cod(G/N)| = 18$. We have $G/N \cong \G_2(4)$ or $\HS$. If $G/N \cong \G_2(4)$, then $2^{12} \cdot 3^3 \cdot 5 \cdot 7 \in \cod(G/N)$. If $G/N \in \HS$, then $11$ is not the factor of $|\cod(\Sy_4(5))|$, a contradiction.

Having ruled out all other cases, we conclude $G/N \cong \Sy_4(5)$.
\end{proof}

\begin{lem}\label{redG24}
Let $G$ be a finite group with $\cod(G) = \cod(\G_2(4))$. If $N$ is the maximal normal subgroup of $G$, it we have $G/N \cong \G_2(4)$.
\end{lem}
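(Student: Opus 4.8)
The plan is to follow the reduction template used throughout the preceding lemmas. Since $\cod(G)=\cod(\G_2(4))$ and $\G_2(4)$ is non-abelian simple, Remark \ref{perfectgroup} shows $G$ is perfect, so for the maximal normal subgroup $N$ the quotient $G/N$ is a non-abelian simple group with $\cod(G/N)\subseteq\cod(G)$. As $|\cod(\G_2(4))|=18$, Lemma \ref{list} restricts $|\cod(G/N)|$ to $\{4,5,\dots,18\}$, and I would dispose of these fifteen cases (a)--(o) in turn, in each case reading off the candidate groups from Lemma \ref{list} and forcing a contradiction unless $G/N\cong\G_2(4)$.

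Every elimination rests on a handful of arithmetic features of $\cod(\G_2(4))$ extracted from Table 1, which I would record at the outset: the unique nontrivial odd codegree is $3^3\cdot 5^2\cdot 7\cdot 13$; the only primes dividing any codegree are $2,3,5,7,13$, so $11,17,19,23$ never appear; the largest prime-power divisors occurring are $3^3$, $5^2$ and $2^{12}$; and the only nontrivial codegrees prime to $3$ are $2^{11}\cdot 5^2\cdot 13$ and $2^{12}\cdot 13$.

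For the infinite families the arguments are uniform. Cases (a),(b) use $\PSL(2,k)$: the odd codegree $k^2-1$ (resp.\ a ``double'' relation $k(k-\epsilon(k))=2\cdot\frac{k(k-\epsilon(k))}{2}$, whose only possible realization here is $2^{11}\cdot 3^3\cdot 7=2\cdot 2^{10}\cdot 3^3\cdot 7$) yields an equation with no integer root. The Suzuki case dies because $3\nmid|{}^2B_2(q)|$ while the target odd codegree carries a factor $3$. For $\PSL(3,q)$, $\PSU(3,q)$ and ${}^2\G_2(q)$ (cases (e),(f),(h)) I would single out the codegree prime to $3$, equate it to $2^{11}\cdot 5^2\cdot 13$ or $2^{12}\cdot 13$, and check that no admissible prime power $q$ solves it; the subcase $\G_2(2)'$ is excluded by $3^3\cdot 7\notin\cod(\G_2(4))$. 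For $\Sy_4(q)$, $q=2^f>4$ (case (j)), the cleanest route is again the odd codegree: $(q^2+1)(q-1)^2(q+1)^2=3^3\cdot 5^2\cdot 7\cdot 13$ is impossible since the left side already exceeds the right at $q=8$, while the paired $\U_4(2)$ subcase is excluded by $3^4\cdot 5\notin\cod(\G_2(4))$.

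The sporadic and small cases are each killed by one forbidden codegree: $\M_{11},\M_{12},\M_{22},\M_{23},\J_1$ involve the prime $11$ (or $17,19,23$); $\A_7$ and $\PSL(4,2)$ contribute $2^2\cdot 3\cdot 5\cdot 7$ and $3^2\cdot 5\cdot 7$; $\Sy_4(4)$ involves $17$; $\U_4(2),\A_9,\U_4(3),\G_2(3),\PSL(4,3)$ involve $3^4$ or $3^6$; $\J_2$ gives $2^6\cdot 3^3\cdot 5^2$; $\McL$ and $\HS$ give $5^3$ (or $11$); and $\Sy_4(5)$ gives $5^4$ --- none lying in $\cod(\G_2(4))$. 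The single delicate comparison is ${}^2\F_4(2)'$ in case (k), which shares the codegree $2^{11}\cdot 5^2\cdot 13$ with $\G_2(4)$; here I would separate the two by their odd codegrees, using that $3^3\cdot 5^2\cdot 13\in\cod({}^2\F_4(2)')$ whereas the only nontrivial odd codegree of $\G_2(4)$ is $3^3\cdot 5^2\cdot 7\cdot 13$. After eliminating $\Sy_4(5)$ and $\HS$ in case (o), only $G/N\cong\G_2(4)$ remains, and comparison of the full sets gives $\cod(G/N)=\cod(G)$. I expect the only genuine work --- as opposed to routine inspection --- to be the Diophantine bookkeeping for the rank-one families and the careful separation of the near-match ${}^2\F_4(2)'$.
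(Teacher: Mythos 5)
Your proposal is correct and follows essentially the same approach as the paper: the same reduction via Lemma \ref{list} to a case analysis on $|\cod(G/N)|\in\{4,\dots,18\}$, with each candidate simple group eliminated by exhibiting a codegree (or prime/prime-power divisor) incompatible with $\cod(\G_2(4))$, and the near-match ${}^2\F_4(2)'$ separated by the odd codegree $3^3\cdot5^2\cdot13$ exactly as the paper does. The only deviations are in the choice of certificates for a few cases --- e.g.\ your growth bound on $(q^2+1)(q-1)^2(q+1)^2$ for $\Sy_4(q)$ where the paper invokes a ``complete search'' on $q^4(q-1)^2$, and your use of $3\nmid|{}^2B_2(q)|$ for the Suzuki case --- all of which are equally valid.
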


\begin{proof}
As $G/N$ is a non-abelian simple group, $\cod(G/N) \subseteq \cod(G)$, so we have that $\cod(G/N)=4, 5, 6, 7, 8, 9, 10, 11, 12, 13, 14, 15, 16, 17,$ or $18$.

Assume on the contrary that there exist a maximal normal subgroup $N$ such that $G/N \not \cong \G_2(4), \cod(G/N) \leq 18$ and $\cod(G/N) \subseteq \cod(G)$. We exhaust all possibilities.

\item [(a)] $|\cod(G/N)| = 4$. By Lemma \ref{list}, $G/N \cong \PSL(2,k)$ where $k = 2^f \geq 4$, so $k^2-1 \in \cod(G/N)$. For all even $k$, $k^2-1$ is odd. $k^2-1$ must equal $3^3 \cdot 5^2 \cdot 7 \cdot 13$ but this yields no integer roots for $k$, contradicting $\cod(G/N) \subseteq \cod(G)$.

\item[(b)] $|\cod(G/N)| = 5$. By Lemma \ref{list}, $G/N \cong \PSL(2,k)$ where $k$ is a power of an odd prime number. This implies that $\displaystyle k(k-\epsilon(k)\in \cod(G/N)$ ,where $k = (-1)^{(k-1)/2}$. we have that $\frac{k(k\pm1)}{2}=2^{10}\cdot3^3\cdot 7$, since it is half of $2^{11}\cdot3^3\cdot 7$. This is a contradiction since $k\in \Z$.

\item[(c)] $|\cod(G/N)| = 6$. By Lemma \ref{list}, $G/N \cong \PSL(3,4)$ or $G/N \cong {}^2B_2(q)$, with $q = 2^{2n+1}$.

If $G/N \cong \PSL(3,4)$ then $3^2 \cdot 5 \cdot 7 \in \cod(\PSL(3,4))$ but $3^2 \cdot 5 \cdot 7 \not\in \cod(\G_2(4))$, contradicting $\cod(G/N) \subseteq \cod(G)$.

If $G/N \cong {}^2B_2(q)$, notice that $(q-1)(q^2+1) \in \cod({}^2B_2(q))$. As $q$ is even, $(q-1)(q^2+1) \equiv 1 \pmod{2}$. This means that $(q-1)(q^2+1) = 3^3 \cdot 5^2 \cdot 7 \cdot 13$ which does not yield any roots, contradicting $\cod(G/N) \subseteq \cod(G)$.

\item[(c)] $|\cod(G/N)| = 7$. By Lemma \ref{list}, we have $G/N \cong \PSL(3,3)$, $\M_{11}$, $\A_7$ or $\J_1$. The elements $2^2 \cdot 3^2 \cdot 13$, $2^3 \cdot 3^3 \cdot 11$, $2^2 \cdot 3 \cdot 5 \cdot 7$, $3 \cdot 5 \cdot 11 \cdot 19$ are in the codegrees of $\PSL(3,3), \M_{11}, \A_7$ or $\J_1$, respectively. However, none of these elements are in $\cod(\G_2(4))$, a contradiction.

\item[(d)] $|\cod(G/N)| = 8$. By Lemma \ref{list}, we have $G/N \cong \PSU(3,q)$ with $4 < q \not \equiv -1 \pmod{3}$, $G/N \cong \PSL(3,q)$ with $4 < q \not \equiv 1 \pmod{3}$ or $G/N \cong \G_2(2)'$.

The only nontrivial elements of $\G_2(4)$ not divisible by $3$ are $2^{11} \cdot 5^2 \cdot 13$ and $2^{12} \cdot 13$.

If $G/N \cong \PSU(3,q)$, then $(q^2-q+1)(q+1)^2(q-1), q^3(q^2-q+1) \in \cod(\PSU(3,q))$. If $q \equiv 1 \mod 3,$ then $q^3(q^2-q+1) \not \equiv 0 \pmod{3}$. Alternatively, if $q \equiv 0 \pmod{3}$, then $(q^2-q+1)(q+1)^2(q-1) \not \equiv 0 \pmod{3}$. However, setting either to be equivalent to $2^{11} \cdot 5^2 \cdot 13$ or $2^{12} \cdot 13$ yield no integer roots for $q$. Thus, $\cod(G/N) \not \subseteq \cod(G)$, a contradiction.

If $G/N \cong \PSL(3,q)$, we have $q^3(q^2+q+1) \in \cod(\PSL(3,q))$. As $q \not \equiv 1 \pmod{3}, q^3(q^2+q+1) \not \equiv 0 \pmod{3}$. However, $q^3(q^2+q+1) = 2^{11} \cdot 5^2 \cdot 13$ or $2^{12} \cdot 13$ has no integer roots. Thus, $\cod(G/N) \not \subseteq \cod(G)$, a contradiction.

In the final case, $3^3 \cdot 7 \in \cod(\G_2(2)')$ but $3^3 \cdot 7 \not \in \cod(\G_2(4))$, contradicting $\cod(G/N) \subseteq \cod(G)$.

\item[(d)] $|\cod(G/N)| = 9$. By Lemma \ref{list}, we have $G/N \cong \PSL(3,q)$ with $4 < q \equiv 1 \pmod{3}$ or $G/N \cong \PSU(3,q)$ with $4 < q \equiv -1 \pmod{3}$.

Both $q^2 - q + 1$ and $q^2 + q + 1$ are not divisible by $9$.

In the first case, we have $\frac{1}{3}q^3(q^2+q+1) \in \cod(\PSL(3,q))$. Thus, $\frac{1}{3}q^3(q^2+q+1) \not \equiv 0 \pmod{3}$. Furthermore, we have $\frac{1}{3}q^3(q^2-q+1) \in \cod(\PSU(3,q))$. This is not divisible by $3$. The equations $\frac{1}{3}q^3(q^2\pm q+1) = 2^{11} \cdot 5^2 \cdot 13$ and $\frac{1}{3}q^3(q^2\pm q+1)=2^{12} \cdot 13$ have no integer roots, contradicting $\cod(G/N) \subseteq \cod(G)$.

\item[(g)] $|\cod(G/N)| = 10$. By Lemma \ref{list}, we have $G/N \cong \M_{22}$. However, $2^7 \cdot 3 \cdot 5 \cdot 11$ is only present in $\cod(\M_{22})$ and not in $\cod(\G_2(4))$, contradicting $\cod(G/N) \subseteq \cod(G)$.

\item[(h)] $|\cod(G/N)| = 11$. By Lemma \ref{list}, we have $G/N \cong \PSL(4,2)$, $G/N \cong \M_{12}$, $G/N \cong \M_{23}$ or $G/N \cong {}^{2}\G_{2}(q)$ for $q = 3^{2f+1}, f \geq 1$.

The codegree sets of $\PSL(4,2)$, $\M_{12}$ and $\M_{23}$ contain $3^2 \cdot 5 \cdot 7$, $2^2 \cdot 3 \cdot 5 \cdot 11$ and $2^7 \cdot 7 \cdot 11$, respectively, none of which are elements of the codegree set of $\G_2(4)$.

Furthermore, $(q^2-1)(q^2-q+1) \in \cod({}^{2}\G_{2}(q))$. However, $(q^2-1)(q^2-q+1) \not \equiv 0 \pmod{3}$ in this case, so we must have $(q^2-1)(q^2-q+1) = 2^{11} \cdot 5^2 \cdot 13$ or $2^{12} \cdot 13$, yet this does not yield an integer root for $q$, contradicting $\cod(G/N) \subseteq \cod(G)$.

\item[(i)] $|\cod(G/N)| = 12$. By Lemma \ref{list}, we have $G/N \cong \Sy_4(4)$. However, $17$ is a factor of $|\cod(\Sy_4(4))|$ and not  $|\cod(\G_2(4)|$, contradicting $\cod(G/N) \subseteq \cod(G)$.

\item[(j)] $|\cod(G/N)| = 13$. By Lemma \ref{list}, we have $G/N \cong \U_4(2)$ or $G/N \cong \Sy_4(q)$ for $q = 2^f > 4$.

In the first case, the element $3^4 \cdot 5 \in \cod(\U_4(2))$ but it is not in $\cod(\G_2(4))$.

In the second case, $q^4(q-1)^2 \in \cod(\Sy_4(q))$. However, by a complete search there is no integer $q$ such that this element becomes equal to an element in $\cod(\G_2(4))$, a contradiction.

\item[(k)] $|\cod(G/N)| = 14$. By Lemma \ref{list}, we have $G/N \cong \U_4(3)$, $G/N \cong {}^2\F_4(2)'$ or $G/N \cong \J_3$.

However, the codegree sets of $\U_4(3)$, ${}^2\F_4(2)'$ and $\J_3$ have $2^4 \cdot 3^6$, $3^3 \cdot 5^2 \cdot 13$ and $2^7 \cdot 3^5 \cdot 19$, respectively, but these are not in $\cod(\G_2(4))$. This contradicts $\cod(G/N) \subseteq \cod(G)$.

\item[(l)] $|\cod(G/N)| = 15$. By Lemma \ref{list}, we have $G/N \cong \G_2(3)$. However, the element $2^5 \cdot 3^6 \cdot 13$ is only present in the codegree set of $\G_2(3)$ and not in the codegree set of $\G_2(4)$, which contradicts $\cod(G/N) \subseteq \cod(G)$.

\item[(m)] $|\cod(G/N)| = 16$. By Lemma \ref{list}, we have $G/N \cong \A_9$ or $G/N \cong \J_2$. In the $\A_9$ case, $2^4 \cdot 3^3 \cdot 5 \in \cod(\A_9)$ but is not in $\cod(\G_2(4))$, while in the $\J_2$ case $2^6 \cdot 3^3 \cdot 5^2 \in \cod(\J_2)$ but this element is not in $\cod(\G_2(4))$. This contradicts $\cod(G/N) \subseteq \cod(G)$.

\item[(n)] $|\cod(G/N)| = 17$. Then $G/N \cong \M^c L$ or $\PSL(4,3)$.

However, there are three nontrivial odd elements in $\cod(\M^c L)$ and only a nontrivial odd codegree of $\cod(\G_2(4))$, a contradiction.

If $G/N = \PSL(4,3)$, the element $3^6\cdot 13 \in \cod(\PSL(4,3))$ but it is not in $\cod(\G_2(4))$, a contradiction.

\item[(o)] $|\cod(G/N)| = 18$. We have $G/N \cong \G_2(4)$, $\Sy_4(5)$ or $\HS$. If $G/N \cong \Sy_4(5)$, then $5^4 \cdot 13 \in \cod(G/N)$. If $G/N \in \HS$, then $2^2 \cdot 3^2 \cdot 5 \cdot 7 \cdot 11 \in \cod(G/N)$. Neither is in $\cod(\G_2(4))$, a contradiction.
Having ruled out all other cases, we conclude $G/N \cong \G_2(4)$.
\end{proof}

\begin{lem}\label{redHS}
Let $G$ be a finite group with $\cod(G) = \cod(\HS)$. If $N$ is the maximal normal subgroup of $G$, we have $G/N \cong \HS$.
\end{lem}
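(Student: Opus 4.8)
The plan is to run the reduction template already used in Lemmas \ref{redS44}--\ref{redG24}. Since $\cod(G)=\cod(\HS)$ and $\HS$ is non-abelian simple, Remark \ref{perfectgroup} shows that $G$ is perfect, so for a maximal normal subgroup $N$ the quotient $G/N$ is a non-abelian simple group with $\cod(G/N)\subseteq\cod(G)=\cod(\HS)$ and hence $|\cod(G/N)|\le 18$. I would then sweep through $|\cod(G/N)|=4,5,\dots,18$, at each cardinality listing the candidate simple groups via Lemma \ref{list}, and show that every candidate other than $\HS$ contributes some codegree outside $\cod(\HS)$.

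First I would record the arithmetic of $\cod(\HS)$ that powers the eliminations: (i) every nontrivial codegree is even; (ii) the only primes dividing codegrees are $2,3,5,7,11$, with top prime powers $2^9$, $3^2$, $5^3$; (iii) the $2$-adic valuations occurring among nontrivial codegrees are exactly $2,4,6,8,9$; and (iv) the only nontrivial codegrees coprime to $3$ are $2^9\cdot5^3$, $2^8\cdot5^3$, and $2^6\cdot5^2\cdot11$. Observation (i) immediately removes every family furnishing a nontrivial odd codegree: $\PSL(2,2^f)$ via $k^2-1$ (cardinality $4$), the $\PSL(3,4)$ and ${}^2B_2(q)$ subcases (cardinality $6$), $\PSL(3,3)$ and $\J_1$ (cardinality $7$), $\G_2(2)'$ (cardinality $8$), $\PSL(4,2)$ (cardinality $11$), $\U_4(2)$, and the odd codegrees of $\U_4(3)$, $\McL$, $\PSL(4,3)$. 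Observation (ii) discards each candidate carrying a prime in $\{13,17,19,23\}$ or a power $3^{\ge 3}$: this handles $\Sy_4(4)$ (cardinality $12$), ${}^2\F_4(2)'$, $\J_3$, $\G_2(3)$, $\M_{23}$, as well as $\M_{12}$, $\A_9$, $\J_2$, $\McL$ and $\PSL(4,3)$ through their $3^{\ge 3}$-codegrees. The remaining low-cardinality sporadic candidates ($\A_7$, $\M_{11}$ at cardinality $7$, $\M_{22}$ at cardinality $10$) fall to (iii) via a forbidden $2^3$- or $2^7$-part, and the even family $\Sy_4(q)$, $q=2^f>4$ (cardinality $13$), falls to (iii) because $\nu_2\big(2q^3(q-1)^2(q+1)^2\big)=3f+1\ge 10>9$.

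The only genuinely computational work lies in the parametric rank-one families at cardinalities $8,9,11$. For $\PSL(3,q)$, $\PSU(3,q)$ and ${}^2\G_2(q)$ I would single out the distinguished codegree coprime to $3$ --- $q^3(q^2+q+1)$, $q^3(q^2-q+1)$, $\tfrac13 q^3(q^2\pm q+1)$ (legitimate since $q^2\pm q+1\not\equiv 0\pmod 9$), or $(q^2-1)(q^2-q+1)$ --- and force it to equal one of the three $3$-coprime codegrees of $\HS$ from observation (iv); each resulting Diophantine equation is shown to have no admissible integer root $q$. The step I expect to be the main obstacle is the cardinality-$5$ case $G/N\cong\PSL(2,k)$ with $k$ an odd prime power, because the quick ``no codegree is twice another'' argument used for $\McL$ now fails: $\cod(\HS)$ genuinely contains the doubling pairs $\{2^8\cdot3^2\cdot5^3,\,2^9\cdot3^2\cdot5^3\}$ and $\{2^8\cdot5^3,\,2^9\cdot5^3\}$. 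Instead I would use that the codegree $k(k-\epsilon(k))$ is exactly twice the codegree $\tfrac12 k(k-\epsilon(k))$, so this pair must realise one of the two doublings above, reducing to $k(k\pm1)=2^9\cdot3^2\cdot5^3$ or $k(k\pm1)=2^9\cdot5^3$; one then checks that neither discriminant $1+2^{11}\cdot3^2\cdot5^3=2304001$ nor $1+2^{11}\cdot5^3=256001$ is a perfect square, so $k\notin\Z$. Finally, when $|\cod(G/N)|=18$ is reached, Lemma \ref{list} leaves only $\Sy_4(5)$, $\G_2(4)$, $\HS$; the first two are excluded by observation (ii), whence $\cod(G/N)=\cod(\HS)=\cod(G)$ and therefore $G/N\cong\HS$.
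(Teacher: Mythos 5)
Your proposal is correct and follows essentially the same route as the paper's proof: the same sweep over $|\cod(G/N)|=4,\dots,18$ via Lemma \ref{list}, eliminating every non-$\HS$ candidate by exhibiting a codegree outside $\cod(\HS)$ (parity, forbidden primes and prime powers, $2$-adic valuations, and forcing the $3$-coprime codegree of the parametric families $\PSL(3,q)$, $\PSU(3,q)$, ${}^2\G_2(q)$ to equal one of $2^9\cdot5^3$, $2^8\cdot5^3$, $2^6\cdot5^2\cdot11$), together with the same doubling-pair analysis for $\PSL(2,k)$ with $k$ odd. Your write-up is in places tighter than the paper's — the explicit discriminants $2304001$ and $256001$ at cardinality $5$, where the paper's half-of-another-codegree argument contains a typo ($2^8\cdot3^3\cdot5^3$ for $2^8\cdot3^2\cdot5^3$), and the clean $\nu_2\ge 10$ bound for $\Sy_4(q)$ — but the decomposition and key lemmas are identical.
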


\begin{proof}
As $G/N$ is a non-abelian simple group, $\cod(G/N) \subseteq \cod(G)$, so we have that $\cod(G/N)=4, 5, 6, 7, 8, 9, 10, 11, 12, 13, 14, 15, 16, 17, 18$, or $19$.

\item [(a)] $|\cod(G/N)| = 4$. By Lemma \ref{list}, $G/N \cong \PSL(2,k)$ where $k = 2^f \geq 4$, so $k^2-1 \in \cod(G/N)$. For all even $k$, $k^2-1$ is odd. $k^2-1$ is the only nontrivial odd codegree in $\cod(G/N)$, but None of the elements in the $\cod(\HS)$ is odd. Contradicting $\cod(G/N) \subseteq \cod(G)$.

\item[(b)] $|\cod(G/N)| = 5$. By Lemma \ref{list}, $G/N \cong \PSL(2,k)$ where $k$ is a power of an odd prime number. This implies that $\displaystyle k(k-\epsilon(k)\in \cod(G/N)$, where $k = (-1)^{(k-1)/2}$. By a complete search, $k(k + 1)$ cannot be equal to $2^8\cdot3^3\cdot5^3$ or $2^8\cdot5^3$ in $\cod(\HS)$ since it is half of another nontrivial codegrees, a contradiction since $k\in \Z$.

\item[(c)] $|\cod(G/N)| = 6$. By Lemma \ref{list}, $G/N \cong \PSL(3,4)$ or $G/N \cong {}^2B_2(q)$, with $q = 2^{2n+1}$.

If $G/N \cong \PSL(3,4)$, then $3^2 \cdot 5 \cdot 7 \in \cod(\PSL(3,4))$ but $3^2 \cdot 5 \cdot 7 \not\in \cod(\HS)$, contradicting $\cod(G/N) \subseteq \cod(G)$.

If $G/N \cong {}^2B_2(q)$, notice that $(q-1)(q^2+1) \in \cod({}^2B_2(q))$. As $q$ is even, $(q-1)(q^2+1) \equiv 1 \pmod{2}$. This means that $(q-1)(q^2+1) \notin \cod(HS)$ since none of these are nontrivial codegrees. Contradicting $\cod(G/N) \subseteq \cod(G)$.

\item[(d)] $|\cod(G/N)| = 7$. By Lemma \ref{list}, we have $G/N \cong \PSL(3,3)$, $\M_{11}$, $\A_7$ or $\J_1$. The elements $2^2 \cdot 3^2 \cdot 13$, $2^3 \cdot 3^3 \cdot 11$, $2^2 \cdot 3 \cdot 5 \cdot 7$, $3 \cdot 5 \cdot 11 \cdot 19$ are in the codegrees of $\PSL(3,3)$, $\M_{11}$, $\A_7$ or $\J_1$, respectively. However, none of these elements are in $\cod(\G_2(4))$, a contradiction.

\item[(e)] $|\cod(G/N)| = 8$. By Lemma \ref{list}, we have $G/N \cong \PSU(3,q)$ with $4 < q \not \equiv -1 \pmod{3}$, $G/N \cong \PSL(3,q)$ with $4 < q \not \equiv 1 \pmod{3}$ or $G/N \cong \G_2(2)'$. There are three nontrivial elements of $\G_2(4)$ not divisible by $3$ are $2^9 \cdot 5^3$, $2^6\cdot5^2\cdot11$ and $2^8 \cdot 5^3$.

If $G/N \cong \PSU(3,q)$, then $(q^2-q+1)(q+1)^2(q-1), q^3(q^2-q+1) \in \cod(\PSU(3,q))$. If $q \equiv 1 \mod 3,$ then $q^3(q^2-q+1) \not \equiv 0 \pmod{3}$. Alternatively, if $q \equiv 0 \pmod{3}$, then $(q^2-q+1)(q+1)^2(q-1) \not \equiv 0 \pmod{3}$. However, setting either to be equivalent to $2^9 \cdot 5^3$, $2^6\cdot5^2\cdot11$ or $2^8 \cdot 5^3$ yield no integer roots for $q$. Thus, $\cod(G/N) \not \subseteq \cod(G)$, a contradiction.

If $G/N \cong \PSL(3,q)$, we have $q^3(q^2+q+1) \in \cod(\PSL(3,q))$. As $q \not \equiv 1 \pmod{3}, q^3(q^2+q+1) \not \equiv 0 \pmod{3}$. However, $q^3(q^2+q+1) = 2^9 \cdot 5^3$, $2^6\cdot5^2\cdot11$ or $2^8 \cdot 5^3$ has no integer roots. Thus, $\cod(G/N) \not \subseteq \cod(G)$, a contradiction.

In the final case, $3^3 \cdot 7 \in \cod(\G_2(2)')$ but $3^3 \cdot 7 \not \in \cod(\HS)$, contradicting $\cod(G/N) \subseteq \cod(G)$.

\item[(f)] $|\cod(G/N)| = 9$. By Lemma \ref{list}, we have $G/N \cong \PSL(3,q)$ with $4 < q \equiv 1 \pmod{3}$ or $G/N \cong \PSU(3,q)$ with $4 < q \equiv -1 \pmod{3}$. Both $q^2 - q + 1$ and $q^2 + q + 1$ are not divisible by $9$.

In the first case, we have $\frac{1}{3}q^3(q^2+q+1) \in \cod(\PSL(3,q))$. Thus, $\frac{1}{3}q^3(q^2+q+1) \not \equiv 0 \pmod{3}$. Furthermore, we have $\frac{1}{3}q^3(q^2-q+1) \in \cod(\PSU(3,q))$. This is not divisible by $3$.

The equations $\frac{1}{3}q^3(q^2\pm q+1) = 2^9 \cdot 5^3$, $2^6\cdot5^2\cdot11$ or $2^8 \cdot 5^3$, and $\frac{1}{3}q^3(q^2\pm q+1)=2^9 \cdot 5^3$, $2^6\cdot5^2\cdot11$ or $2^8 \cdot 5^3$ have no integer roots, contradicting $\cod(G/N) \subseteq \cod(G)$.

\item[(g)] $|\cod(G/N)| = 10$. By Lemma \ref{list}, we have $G/N \cong \M_{22}$. However, $2^7 \cdot 3 \cdot 5 \cdot 11$ is only present in $\cod(\M_{22})$ and not in $\cod(\HS))$, contradicting $\cod(G/N) \subseteq \cod(G)$.

\item[(h)] $|\cod(G/N)| = 11$. Lemma \ref{list}, we have $G/N \cong \PSL(4,2)$, $G/N \cong \M_{12}$, $G/N \cong \M_{23}$ or $G/N \cong {}^{2}\G_{2}(q)$ for $q = 3^{2f+1}, f \geq 1$.

$\cod(\PSL(4,2))$, $\cod(\M_{12})$ and $\cod(\M_{23})$ contain $3^2 \cdot 5 \cdot 7$, $2^2 \cdot 3 \cdot 5 \cdot 11$ and $2^7 \cdot 7 \cdot 11,$ respectively, none of which are elements of the codegree set of $\HS$.

Furthermore, $(q^2-1)(q^2-q+1) \in \cod({}^{2}\G_{2}(q))$. However, $(q^2-1)(q^2-q+1) \not \equiv 0 \pmod{3}$ in this case, so we must have $(q^2-1)(q^2-q+1) = 2^8 \cdot 5^3$, $2^6\cdot5^2\cdot11$ or $2^9 \cdot 5^3$, yet this does not yield an integer root for $q$, contradicting $\cod(G/N) \subseteq \cod(G)$.

\item[(i)] $|\cod(G/N)| = 12$. By Lemma \ref{list}, we have $G/N \cong \Sy_4(4)$. However, $17$ is a factor of $|\cod(\Sy_4(4))|$ and not $|\cod(\HS)|$, contradicting $\cod(G/N) \subseteq \cod(G)$.

\item[(j)] $|\cod(G/N)| = 13$. By Lemma \ref{list}, we have $G/N \cong \U_4(2)$ or $G/N \cong \Sy_4(q)$ for $q = 2^f > 4$.

In the first case, the element $3^4 \cdot 5 \in \cod(\U_4(2))$ but it is not in $\cod(\HS).$

In the second case, $q^4(q-1)^2 \in \cod(\Sy_4(q))$. However, by a complete search there is no integer $q$ such that $q^4(q-1)^2$ equals an element in $\cod(\HS)$, a contradiction.

\item[(k)] $|\cod(G/N)| = 14$. By Lemma \ref{list}, we have $G/N \cong \U_4(3)$, $G/N \cong {}^2\F_4(2)'$ or $G/N \cong \J_3$. However, the codegree sets of $\U_4(3)$, ${}^2\F_4(2)'$ and $\J_3$ have $2^4 \cdot 3^6$, $3^3 \cdot 5^2 \cdot 13$ and $2^7 \cdot 3^5 \cdot 19$, respectively, but these are not in $\cod(\HS)$. This contradicts $\cod(G/N) \subseteq \cod(G)$.

\item[(l)] $|\cod(G/N)| = 15$. By Lemma \ref{list}, we have $G/N \cong \G_2(3)$. However, the element $2^5 \cdot 3^6 \cdot 13$ is only present in the codegree set of $\G_2(3)$ and not in the codegree set of $\HS$, which contradicts $\cod(G/N) \subseteq \cod(G)$.

\item[(m)] $|\cod(G/N)| = 16$. By Lemma \ref{list}, we have $G/N \cong \A_9$ or $G/N \cong \J_2$. In the $\A_9$ case, $2^4 \cdot 3^3 \cdot 5 \in \cod(\A_9)$ but is not in $\cod(\HS)$, while in the $\J_2$ case $2^6 \cdot 3^3 \cdot 5^2 \in \cod(\J_2)$ but $2^6 \cdot 3^3 \cdot 5^2 \not \in \cod(\HS)$. This contradicts $\cod(G/N) \subseteq \cod(G)$.

\item[(n)] $|\cod(G/N)| = 17$. Then $G/N \cong \M^c L$ or $\PSL(4,3)$.

Notice that the element $2^7 \cdot 5^3 \cdot 11$ is only in $\cod(\M^c L)$ and not the codegree of $\cod(\HS)$, a contradiction.

If $G/N = \PSL(4,3)$, the element $2^3 \cdot 3^6 \in \cod(\PSL(4,3))$ but it is not in $\cod(\HS)$.\\

\item[(o)] $|\cod(G/N)| = 18$. We have $G/N \cong \G_2(4)$ or $\Sy_4(5)$. If $G/N \cong \Sy_4(5)$, then $5^4 \cdot 13 \in \cod(G/N)$. If $G/N \in \G_2(4)$, then $2^{12} \cdot 3^3 \cdot 5 \cdot 7 \in \cod(G/N)$. Neither is in $\cod(\HS)$, a contradiction.

Having ruled out all other cases, we conclude $G/N \cong \HS$.
\end{proof}

\begin{lem}\label{redON}
Let $G$ be a finite group with $\cod(G) = \cod(\ON)$. If $N$ is the maximal normal subgroup of $G$, it we have $G/N \cong \ON$.
\end{lem}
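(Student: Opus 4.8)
The plan is to follow verbatim the reduction scheme established in the preceding lemmas. Since $\cod(G)=\cod(\ON)$, Remark \ref{perfectgroup} shows that $G$ is perfect, so the maximal normal subgroup $N$ has non-abelian simple quotient $G/N$. Because $\cod(G/N)\subseteq\cod(G)$ and $|\cod(\ON)|=19$, Lemma \ref{list} forces $|\cod(G/N)|$ into $\{4,5,\dots,19\}$ and, for each such cardinality, lists the finitely many isomorphism types or parametric families that can occur. I would then treat each value $4\le|\cod(G/N)|\le 19$ in turn, showing that every candidate other than $\ON$ itself yields either a codegree not lying in $\cod(\ON)$ or a Diophantine equation with no admissible solution. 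Since Lemma \ref{list}(p) asserts that $\ON$ is the \emph{unique} simple group with exactly $19$ codegrees, the terminal case $|\cod(G/N)|=19$ immediately gives $G/N\cong\ON$. Note that $|\cod(G/N)|=20$ cannot arise, as $|\cod(G/N)|\le|\cod(\ON)|=19$.

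For the eliminations I would exploit the arithmetic of $\cod(\ON)$. The sporadic and small exceptional candidates ($\G_2(2)'$, $\M_{22}$, $\PSL(4,2)$, $\M_{12}$, $\M_{23}$, $\Sy_4(4)$, $\U_4(2)$, $\U_4(3)$, ${}^2\F_4(2)'$, $\J_3$, $\G_2(3)$, $\A_9$, $\J_2$, $\McL$, $\PSL(4,3)$) are dispatched by exhibiting a single codegree absent from $\cod(\ON)$ --- for instance one whose $2$-part exceeds the largest $2$-part occurring in $\cod(\ON)$, or one carrying a prime not dividing $|\ON|$. For the infinite families I would use that $\cod(\ON)$ has only two nontrivial odd codegrees, namely $3^4\cdot5\cdot11\cdot19\cdot31$ and $7^3\cdot11\cdot19\cdot31$, both divisible by the distinctive primes $11,19,31$. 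Thus for $\PSL(2,2^f)$ the sole nontrivial odd codegree $k^2-1$ must equal one of these two values, neither of which is of the form $k^2-1$ for integer $k$; for $\PSL(2,p^f)$ with $p$ odd I use that no element of $\cod(\ON)$ is exactly half of another; and for ${}^2B_2(q)$, $\PSL(3,q)$, $\PSU(3,q)$, and ${}^2\G_2(q)$ I match the designated odd (or $3$-free) codegree of the family against this short odd list and check that the resulting polynomial in $q$ has no prime-power root.

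The main obstacle I anticipate is precisely the parametric families, where a finite numerical check does not suffice and one must exclude \emph{all} prime powers $q$ at once. The cleanest route is to isolate, for each family, a codegree whose residue modulo $3$ is forced to be nonzero --- e.g.\ $q^3(q^2\pm q+1)$, or $\tfrac{1}{3}q^3(q^2\pm q+1)$ once one verifies $9\nmid q^2\pm q+1$ by a complete search modulo $9$ --- equate it to the unique $3$-free or odd element of $\cod(\ON)$ it could match, and confront its factorization (in particular the triple $11\cdot19\cdot31$, or the fixed $2$-adic valuations) to rule out any solution. Because Lemma \ref{list}(p) leaves $\ON$ as the only simple group of codegree cardinality $19$ and admits no competitor at that cardinality, once all smaller cardinalities are excluded the conclusion $G/N\cong\ON$ follows immediately.
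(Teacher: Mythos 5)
Your overall architecture matches the paper's proof exactly: perfectness via Remark \ref{perfectgroup}, the bound $|\cod(G/N)|\le 19$, the case-by-case elimination through Lemma \ref{list}, absent codegrees for the sporadic/small candidates, and odd-codegree or $3$-free-codegree matching for the parametric families. However, one of your elimination steps rests on a false arithmetic claim and would fail as written. You dispose of $\PSL(2,p^f)$ with $p$ odd by asserting that ``no element of $\cod(\ON)$ is exactly half of another.'' This is not true: the table gives both $2^3{\cdot}3^4{\cdot}5{\cdot}7^3{\cdot}31$ and $2^2{\cdot}3^4{\cdot}5{\cdot}7^3{\cdot}31$ as elements of $\cod(\ON)$, and the first is exactly twice the second (these arise from the O'Nan character degrees $13376$ and $26752=2\cdot 13376$). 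So the halving obstruction you rely on does not exist, and your case $|\cod(G/N)|=5$ collapses.

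The gap is repairable, and the paper's own treatment shows how: since $k\bigl(k-\epsilon(k)\bigr)\in\cod(G/N)$ is a product of two consecutive integers, one checks directly (a finite search over the eighteen nontrivial elements of $\cod(\ON)$, e.g.\ by testing whether $4m+1$ is a perfect square) that no element of $\cod(\ON)$ has the form $k(k\pm 1)$; in particular $2^3{\cdot}3^4{\cdot}5{\cdot}7^3{\cdot}31 = 34450920$ lies strictly between $5868\cdot 5869$ and $5869\cdot 5870$, so even the one ``doubling pair'' present in $\cod(\ON)$ does not produce a solution. The remainder of your plan (the odd codegrees $3^4{\cdot}5{\cdot}11{\cdot}19{\cdot}31$ and $7^3{\cdot}11{\cdot}19{\cdot}31$ for $\PSL(2,2^f)$ and ${}^2B_2(q)$, the mod-$3$ and complete-search arguments for $\PSL(3,q)$, $\PSU(3,q)$, ${}^2\G_2(q)$, and single missing codegrees for the remaining candidates) is sound and coincides with what the paper does, but you must replace the halving argument before the proof is complete.
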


\begin{proof}
As $G/N$ is a non-abelian simple group, $\cod(G/N) \subseteq \cod(G)$, so we have that $\cod(G/N)=4, 5, 6, 7, 8, 9, 10, 11, 12, 13, 14, 15, 16, 17$ or $18$.

\item [(a)] $|\cod(G/N)| = 4$. By Lemma \ref{list}, $G/N \cong \PSL(2,k)$ where $k = 2^f \geq 4$. The element $k^2-1 \in \cod(G/N)$. For all even $k$, $k^2-1$ is odd. $k^2-1$ must equal $7^3 \cdot 11 \cdot 19 \cdot 31$ or $3^4 \cdot 5 \cdot 11 \cdot 19 \cdot 31$ but this yields no integer roots for $k$, contradicting $\cod(G/N) \subseteq \cod(G)$.

\item[(b)] $|\cod(G/N)| = 5$. By Lemma \ref{list}, $G/N \cong \PSL(2,k)$ where $k$ is a power of an odd prime number. We have $\displaystyle k(k-\epsilon(k)\in \cod(G/N)$, where $k = (-1)^{(k-1)/2}$.
By a complete search, $k(k + 1)$ cannot be equal to any of the nontrivial elements in $\cod(\ON)$ for any integer $k$, a contradiction.

\item[(c)] $|\cod(G/N)| = 6$. By Lemma \ref{list}, $G/N \cong \PSL(3,4)$ or $G/N \cong {}^2B_2(q)$, with $q = 2^{2n+1}$.

If $G/N \cong \PSL(3,4)$ then $3^2 \cdot 5 \cdot 7 \in \cod(\PSL(3,4))$ but $3^2 \cdot 5 \cdot 7 \not\in \cod(\ON)$, contradicting $\cod(G/N) \subseteq \cod(G)$.

If $G/N \cong {}^2B_2(q)$, notice that $(q-1)(q^2+1) \in \cod({}^2B_2(q))$. As $q$ is even, $(q-1)(q^2+1) \equiv 1 \pmod{2}$. This means that $(q-1)(q^2+1) = 7^3\cdot11\cdot19\cdot31$ or $3^4\cdot5\cdot11\cdot19\cdot31$ which does not yield any roots, contradicting $\cod(G/N) \subseteq \cod(G)$.

\item[(d)] $|\cod(G/N)| = 7$. By Lemma \ref{list}, we have $G/N \cong \PSL(3,3)$, $\M_{11}$, $\A_7$ or $\J_1$.

The elements $2^2 \cdot 3^2 \cdot 13$, $2^3 \cdot 3^3 \cdot 11$, $2^2 \cdot 3 \cdot 5 \cdot 7$, $3 \cdot 5 \cdot 11 \cdot 19$ are in the codegrees of $\PSL(3,3)$, $\M_{11}$, $\A_7$ or $\J_1$, respectively. However, none of these are elements of $\cod(\ON)$, a contradiction.

\item[(e)] $|\cod(G/N)| = 8$. By Lemma \ref{list}, we have $G/N \cong \PSU(3,q)$ with $4 < q \not \equiv -1 \pmod{3}$, $G/N \cong \PSL(3,q)$ with $4 < q \not \equiv 1 \pmod{3}$ or $G/N \cong \G_2(2)'$.

If $G/N \cong \PSU(3,q)$, then $q^3(q^2-q+1) \in \cod(\PSU(3,q))$. By a complete search, $q^3(q^2-q+1)$ does not equal an element of $\cod(\ON)$ for any integer $q$.

If $G/N \cong \PSL(3,q)$, we have $q^3(q^2+q+1) \in \cod(\PSL(3,q))$. By a complete search, $q^3(q^2+q+1)$ does not equal an element of $\cod(\ON)$ for any integer $q$. Thus, $\cod(G/N) \not \subseteq \cod(G)$, a contradiction.

In the final case, $3^3 \cdot 7 \in \cod(\G_2(2)')$ but $3^3 \cdot 7 \not \in \cod(\ON)$, contradicting $\cod(G/N) \subseteq \cod(G)$.

\item[(f)] $|\cod(G/N)| = 9$. By Lemma \ref{list}, we have $G/N \cong \PSL(3,q)$ with $4 < q \equiv 1 \pmod{3}$ or $G/N \cong \PSU(3,q)$ with $4 < q \equiv -1 \pmod{3}$.

In the first case, we have $\frac{1}{3}q^3(q^2+q+1) \in \cod(\PSL(3,q))$. Furthermore, we have $\frac{1}{3}q^3(q^2-q+1) \in \cod(\PSU(3,q))$. Regardless, by a complete search, these expressions do not equal an element of $\cod(\ON)$ for any integer $q$, contradicting $\cod(G/N) \subseteq \cod(G)$.

\item[(g)] $|\cod(G/N)| = 10$. By Lemma \ref{list}, we have $G/N \cong \M_{22}$. However, $2^7 \cdot 3 \cdot 5 \cdot 11$ is only present in $\cod(\M_{22})$ and not in $\cod(\ON)$, contradicting $\cod(G/N) \subseteq \cod(G)$.

\item[(h)] $|\cod(G/N)| = 11$. Lemma \ref{list}, we have $G/N \cong \PSL(4,2)$, $G/N \cong \M_{12}$, $G/N \cong \M_{23}$ or $G/N \cong {}^{2}\G_{2}(q)$ for $q = 3^{2f+1}, f \geq 1$.

$\cod(\PSL(4,2))$, $\cod(\M_{12})$ and $\cod(\M_{23})$ contain $3^2 \cdot 5 \cdot 7$, $2^2 \cdot 3 \cdot 5 \cdot 11$ and $2^7 \cdot 7 \cdot 11$, respectively, none of which are elements of the codegree set of $\ON$.

Furthermore, $(q^2-1)(q^2-q+1) \in \cod({}^{2}\G_{2}(q))$. By a complete search, this element does not equal an element of $\cod(\ON)$ for any integer root for $q$, contradicting $\cod(G/N) \subseteq \cod(G)$.

\item[(i)] $|\cod(G/N)| = 12$. By Lemma \ref{list}, we have $G/N \cong \Sy_4(4)$. However, $2^7 \cdot 5^2 \cdot 17$ is in of $\cod(\Sy_4(4))$ and not in $\cod(\ON)$, contradicting $\cod(G/N) \subseteq \cod(G)$.

\item[(j)] $|\cod(G/N)| = 13$. By Lemma \ref{list}, we have $G/N \cong \U_4(2)$ or $G/N \cong \Sy_4(q)$ for $q = 2^f > 4$.

In the first case, the element $3^4 \cdot 5 \in \cod(\U_4(2))$ but it is not in $\cod(\ON)$.

In the second case, $q^4(q-1)^2 \in \cod(\Sy_4(q))$. However, by a complete search there is no integer $q$ such that $q^4(q-1)^2$ equals an element in $\cod(\ON)$, a contradiction.

\item[(k)] $|\cod(G/N)| = 14$. By Lemma \ref{list}, we have $G/N \cong \U_4(3)$, $G/N \cong {}^2\F_4(2)'$ or $G/N \cong \J_3$. However, the codegree sets of $\U_4(3)$, ${}^2\F_4(2)'$ and $\J_3$ have $2^4 \cdot 3^6$, $3^3 \cdot 5^2 \cdot 13$ and $2^7 \cdot 3^5 \cdot 19$, respectively, but none of these numbers are in $\cod(\ON)$. This contradicts $\cod(G/N) \subseteq \cod(G)$.

\item[(l)] $|\cod(G/N)| = 15$. By Lemma \ref{list}, we have $G/N \cong \G_2(3)$. However, the element $2^5 \cdot 3^6 \cdot 13$ is only present in the codegree set of $\G_2(3)$ and not in the codegree set of $\ON$, which contradicts $\cod(G/N) \subseteq \cod(G)$.

\item[(m)] $|\cod(G/N)| = 16$. By Lemma \ref{list}, we have $G/N \cong \A_9$ or $G/N \cong \J_2$. If $G/N \cong \A_9$ case, $2^4 \cdot 3^3 \cdot 5 \in \cod(\A_9)$ but is not in $\cod(\ON)$, while in the $\J_2$ case $2^6 \cdot 3^3 \cdot 5^2 \in \cod(\J_2)$ but $2^6 \cdot 3^3 \cdot 5^2 \not \in \cod(\ON)$. This contradicts $\cod(G/N) \subseteq \cod(G)$.

\item[(n)] $|\cod(G/N)| = 17$. Then $G/N \cong \M^c L$ or $\PSL(4,3)$. However,  $2^7 \cdot 5^3 \cdot 11$ is only in $\cod(\M^c L)$ and not $\cod(\ON)$, a contradiction. If $G/N = \PSL(4,3)$, the element $2^3 \cdot 3^6 \in \cod(\PSL(4,3))$ but it is not in $\cod(\ON)$.

\item[(o)] $|\cod(G/N)| = 18$. We have $G/N \cong \G_2(4), \Sy_4(5)$ or $\HS$.

If $G/N \cong \Sy_4(5)$, then $5^4 \cdot 13 \in \cod(G/N)$. If $G/N \in \G_2(4)$, then $2^{12} \cdot 3^3 \cdot 5 \cdot 7 \in \cod(G/N)$. If $G/N \cong \HS$, $2^8 \cdot 3^2 \cdot 11 \in \cod(G/N)$. None of these elements are in $\cod(\ON)$, a contradiction.

Having ruled out all other cases, we conclude $G/N \cong \ON$.
\end{proof}

\begin{lem}\label{redM24}
Let $G$ be a finite group with $\cod(G) = \cod(\M_{24})$. If $N$ is the maximal normal subgroup of $G$, it we have $G/N \cong \M_{24}$.
\end{lem}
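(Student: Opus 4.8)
The plan is to repeat the scheme of the previous lemmas. By Remark~\ref{perfectgroup}, $G$ is perfect, so $G/N$ is a non-abelian simple group, and $\cod(G/N)\subseteq\cod(G)=\cod(\M_{24})$. Since $|\cod(\M_{24})|=20$, Lemma~\ref{list} restricts $|\cod(G/N)|$ to the range $4\le |\cod(G/N)|\le 20$, and I would examine each value separately. The whole argument is driven by one observation about Table~1: $\cod(\M_{24})$ is built from the primes $2,3,5,7,11,23$ only, with $2$-part never exceeding $2^{10}$ and $3$-part never exceeding $3^{3}$. Hence any simple $S$ whose codegree set contains a prime outside $\{2,3,5,7,11,23\}$ (such as $13$, $17$, $19$, or $31$), or a prime power such as $2^{11}$ or $3^{4}$, cannot have $\cod(S)\subseteq\cod(\M_{24})$.

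For $4\le|\cod(G/N)|\le 19$ I would eliminate each candidate of Lemma~\ref{list} exactly as in the earlier proofs. For the infinite families $\PSL(2,k)$, ${}^2B_2(q)$, $\PSL(3,q)$, $\PSU(3,q)$, ${}^2\G_2(q)$ and $\Sy_4(q)$ I would single out a distinguished codegree --- usually the unique nontrivial odd codegree, or a codegree of $p'$-part type --- equate it to the matching element of $\cod(\M_{24})$, and derive either a non-integral value of the parameter or a divisibility contradiction. For the remaining fixed groups ($\PSL(3,4)$, $\A_7$, $\M_{11}$, $\J_1$, $\G_2(2)'$, $\M_{22}$, $\PSL(4,2)$, $\M_{12}$, $\M_{23}$, $\Sy_4(4)$, $\U_4(2)$, $\U_4(3)$, ${}^2\F_4(2)'$, $\J_3$, $\G_2(3)$, $\A_9$, $\J_2$, $\PSL(4,3)$, $\McL$, $\Sy_4(5)$, $\G_2(4)$, $\HS$, $\ON$) it suffices to exhibit a single codegree absent from $\cod(\M_{24})$; these are routine checks against Table~1.

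The genuinely new case, and the one I expect to be the main obstacle, is $|\cod(G/N)|=20$. By Lemma~\ref{list}(r) this gives either $G/N\cong\M_{24}$, which is the desired conclusion, or $G/N\cong\G_2(q)$ with $q=p^{n}\ge 7$ and $q\equiv 2,3,4\pmod 6$; the congruence condition says precisely that $q$ is a power of $2$ with $q\ge 8$, or a power of $3$ with $q\ge 9$. I would rule out $\G_2(q)$ by comparing $p$-parts of codegrees. Since $|\G_2(q)|=q^{6}(q^{6}-1)(q^{2}-1)$ has $p$-part exactly $q^{6}$, any nontrivial irreducible character of $p'$-degree produces a codegree divisible by $q^{6}=p^{6n}$. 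Such a character always exists: a semisimple character $\chi_s$ attached to a regular semisimple element $s$ has degree equal to a $p'$-index and is therefore coprime to $p$ (this is already visible in the $\G_2(4)$ row of Table~1, where codegrees attain the $2$-part $2^{12}=q^{6}$). But $6n\ge 18$ when $p=2$ and $6n\ge 12$ when $p=3$, while the largest $p$-part occurring in $\cod(\M_{24})$ is $2^{10}$ for $p=2$ and $3^{3}$ for $p=3$. This contradicts $\cod(\G_2(q))\subseteq\cod(\M_{24})$, so the $\G_2(q)$ family is excluded and $G/N\cong\M_{24}$.

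The only step needing more than a table lookup is the assertion that $\G_2(q)$ carries a nontrivial character of degree prime to $p$; this I would justify through the semisimple (Deligne--Lusztig) characters, whose degrees are $p'$-parts of indices and hence coprime to the defining characteristic. All the remaining work consists of the elementary prime-power comparisons against Table~1 already used repeatedly above.
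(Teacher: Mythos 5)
Your proposal is correct and, for the values $4\le|\cod(G/N)|\le 19$, it coincides with the paper's proof: the same case-by-case elimination from Lemma~\ref{list}, each candidate killed by exhibiting one codegree (or a prime, or a $p$-part) that cannot occur in $\cod(\M_{24})$. The genuine divergence is exactly where you predicted it, the case $|\cod(G/N)|=20$ with $G/N\cong\G_2(q)$, $q\ge 7$, $q\equiv 2,3,4\pmod 6$. The paper quotes from \cite{Tong-Viet} that $(q^6-1)(q^2-1)$ (the Steinberg codegree) lies in $\cod(\G_2(q))$ and then asserts ``from a complete search'' that this number never equals an element of $\cod(\M_{24})$ --- an argument that is left vague for an infinite family (for $q$ even it is actually immediate, since $(q^6-1)(q^2-1)$ is odd while every nontrivial element of $\cod(\M_{24})$ is even, but for $q=3^n$ some unstated finite check is being invoked). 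You instead take a nontrivial irreducible character of $p'$-degree --- a semisimple Deligne--Lusztig character, or equally well a principal-series character in general position --- whose codegree is then divisible by $|\G_2(q)|_p=q^6$, which is at least $2^{18}$ when $p=2$ ($q\ge 8$) and at least $3^{12}$ when $p=3$ ($q\ge 9$); comparing with the maximal $p$-parts $2^{10}$ and $3^3$ occurring in $\cod(\M_{24})$ eliminates the whole family uniformly, with no search at all. The trade-off is clear: the paper needs only the single codegree value it cites but hides a computation, while your argument is more conceptual and robust at the cost of importing the existence of $p'$-degree characters from Deligne--Lusztig theory (a standard fact, which you correctly flag and justify, and which is consistent with the $2^{12}=q^6$ visible in the $\G_2(4)$ row of Table~1). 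Both routes are sound.
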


\begin{proof}
As $G/N$ is a non-abelian simple group, $\cod(G/N) \subseteq \cod(G)$, so we have that $\cod(G/N)=4, 5, 6, 7, 8, 9, 10, 11, 12, 13, 14, 15, 16, 17, 18, 19$ or $G/N \cong \G_2(q)$.

\item [(a)] $|\cod(G/N)| = 4$. By Lemma \ref{list}, $G/N \cong \PSL(2,k)$ where $k = 2^f \geq 4$, with $k^2-1 \in \cod(G/N)$. By a complete search $k^2-1$ does not equal an element of $\cod(\M_{24})$ since none of the nontrivial codegrees are odd. This is a contradiction.

\item[(b)] $|\cod(G/N)| = 5$. By Lemma \ref{list}, $G/N \cong \PSL(2,k)$ where $k$ is a power of an odd prime number. We have $\displaystyle k(k-\epsilon(k)\in \cod(G/N)$, where $k = (-1)^{(k-1)/2}$. By a complete search, $k(k + 1)$ cannot be equal to any of the nontrivial elements in $\cod(\M_{24})$ for any integer $k$, a contradiction.

\item[(c)] $|\cod(G/N)| = 6$. By Lemma \ref{list}, $G/N \cong \PSL(3,4)$ or $G/N \cong {}^2B_2(q)$, with $q = 2^{2n+1}$.

If $G/N \cong \PSL(3,4)$ then $3^2 \cdot 5 \cdot 7 \in \cod(\PSL(3,4))$ but $3^2 \cdot 5 \cdot 7 \not\in \cod(\M_{24})$, contradicting $\cod(G/N) \subseteq \cod(G)$.

If $G/N \cong {}^2B_2(q)$, notice that $(q-1)(q^2+1) \in \cod({}^2B_2(q))$. By a complete search $(q-1)(q^2+1)$ does not equal an element of $\cod(\M_{24})$ for any integer $q$, a contradiction.

\item[(d)] $|\cod(G/N)| = 7$. By Lemma \ref{list}, we have $G/N \cong \PSL(3,3)$, $\M_{11}$, $\A_7$ or $\J_1$. The elements $2^2 \cdot 3^2 \cdot 13$, $2^3 \cdot 3^3 \cdot 11$, $2^2 \cdot 3 \cdot 5 \cdot 7$, $3 \cdot 5 \cdot 11 \cdot 19$ are in the codegrees of $\PSL(3,3)$, $\M_{11}$, $\A_7$ or $\J_1$, respectively. However, none of these are elements of $\cod(\M_{24}$, a contradiction.

\item[(e)] $|\cod(G/N)| = 8$. By Lemma \ref{list}, we have $G/N \cong \PSU(3,q)$ with $4 < q \not \equiv -1 \pmod{3}$, $G/N \cong \PSL(3,q)$ with $4 < q \not \equiv 1 \pmod{3}$ or $G/N \cong \G_2(2)'$.

If $G/N \cong \PSU(3,q)$, then $q^3(q^2-q+1) \in \cod(\PSU(3,q))$. By a complete search, $q^3(q^2-q+1)$ does not equal an element of $\cod(\M_{24})$ for any integer $q$, since none of these elements are odd in $\cod(\M_{24}$.

If $G/N \cong \PSL(3,q),$ we have $q^3(q^2+q+1) \in \cod(\PSL(3,q))$. By a complete search, $q^3(q^2+q+1)$ does not equal an element of $\cod(\M_{24})$ for any integer $q$, a contradiction.

Furthermore, $3^3 \cdot 7 \in\cod(\G_2(2)')$ but $3^3 \cdot 7 \not \in \cod(\M_{24})$, a contradiction.

\item[(f)] $|\cod(G/N)| = 9$. By Lemma \ref{list}, we have $G/N \cong \PSL(3,q)$ with $4 < q \equiv 1 \pmod{3}$ or $G/N \cong \PSU(3,q)$ with $4 < q \equiv -1 \pmod{3}$.

 By a complete search, these expressions do not equal any element of $\cod(\M_{24})$ for any nontrivial codegree is even in $\cod(G)$.

\item[(g)] $|\cod(G/N)| = 10$. By Lemma \ref{list}, we have $G/N \cong \M_{22}$. However, $2^7 \cdot 3 \cdot 5 \cdot 11$ is only present in $\cod(\M_{22})$ and not in $\cod(\M_{24})$, contradicting $\cod(G/N) \subseteq \cod(G)$.

\item[(h)] $|\cod(G/N)| = 11$. Lemma \ref{list}, we have $G/N \cong \PSL(4,2)$, $G/N \cong \M_{12}$, $G/N \cong \M_{23}$ or $G/N \cong {}^{2}\G_{2}(q)$ for $q = 3^{2f+1}, f \geq 1$. $\cod(\PSL(4,2)), \cod(\M_{12})$ and $\cod(\M_{23})$ contain $3^2 \cdot 5 \cdot 7, 2^2 \cdot 3 \cdot 5 \cdot 11$ and $2^7 \cdot 7 \cdot 11$, respectively, none of which are elements of $\cod(\M_{24})$.

Furthermore, $(q^2-1)(q^2-q+1) \in \cod({}^{2}\G_{2}(q))$. By a complete search, this element does not equal an element of $\cod(ON)$ for any integer root for $q$, contradicting $\cod(G/N) \subseteq \cod(G)$.

\item[(i)] $|\cod(G/N)| = 12$. By Lemma \ref{list}, we have $G/N \cong \Sy_4(4)$. However, $2^7 \cdot 5^2 \cdot 17$ is in of $\cod(\Sy_4(4))$ and not in $\cod(\ON)$, contradicting $\cod(G/N) \subseteq \cod(G)$.

\item[(j)] $|\cod(G/N)| = 13$. By Lemma \ref{list}, we have $G/N \cong \U_4(2)$ or $G/N \cong \Sy_4(q)$ for $q = 2^f > 4$.
If $G/N \cong \U_4(2)$, the element $3^4 \cdot 5 \in \cod(\U_4(2))$ but it is not in $\cod(\ON)$.

Furthermore, $q^4(q-1)^2 \in \cod(\Sy_4(q))$. However, by a complete search there is no integer $q$ such that $q^4(q-1)^2$ equals an element in $\cod(\M_{24})$, a contradiction.

\item[(k)] $|\cod(G/N)| = 14$. By Lemma \ref{list}, we have $G/N \cong \U_4(3)$, $G/N \cong {}^2\F_4(2)'$ or $G/N \cong \J_3$. However, the codegree sets of $\U_4(3)$, ${}^2\F_4(2)'$ and $\J_3$ have $2^4 \cdot 3^6$, $3^3 \cdot 5^2 \cdot 13$ and $2^7 \cdot 3^5 \cdot 19$, respectively, but none of these numbers are in $\cod(\M_{24})$, a contradiction.

\item[(l)] $|\cod(G/N)| = 15$. By Lemma \ref{list}, we have $G/N \cong \G_2(3)$. However, the element $2^5 \cdot 3^6 \cdot 13$ is only present in the codegree set of $\G_2(3)$ and not in the codegree set of $\M_{24}$, a contradiction.

\item[(m)] $|\cod(G/N)| = 16$. By Lemma \ref{list}, we have $G/N \cong \A_9$ or $G/N \cong \J_2$. If $G/N \cong \A_9$, $2^4 \cdot 3^3 \cdot 5 \in \cod(\A_9)$ but is not in $\cod(\M_{24})$, while in the $\J_2$ case $2^6 \cdot 3^3 \cdot 5^2 \in \cod(\J_2)$ but $2^6 \cdot 3^3 \cdot 5^2 \not \in \cod(\M_{24})$. This contradicts $\cod(G/N) \subseteq \cod(G)$.

\item[(n)] $|\cod(G/N)| = 17$. Then $G/N \cong \M^c L$ or $\PSL(4,3)$. However,  $2^7 \cdot 5^3 \cdot 11$ is only in $\cod(\M^c L)$ and not $\cod(\M_{24})$ contradiction. If $G/N = \PSL(4,3)$, the element $2^3 \cdot 3^6 \in \cod(\PSL(4,3))$ but it is not in $\cod(\M_{24})$.

\item[(o)] $|\cod(G/N)| = 18$. We have $G/N \cong \G_2(4)$, $\Sy_4(5)$ or $\HS$. If $G/N \cong \Sy_4(5)$, then $5^4 \cdot 13 \in \cod(G/N)$. If $G/N \in \G_2(4)$. then $2^{12} \cdot 3^3 \cdot 5 \cdot 7 \in \cod(G/N)$. If $G/N \cong \HS$, $2^8 \cdot 3^2 \cdot 11 \in \cod(G/N)$. None of these elements are $\cod(\M_{24})$, a contradiction.

\item[(p)] $|\cod(G/N)| = 19$. $G/N$ must be isomorphic to $\ON$. However, $2^4 \cdot 3^4 \cdot 7^2 \cdot 31 \in \cod(\ON)$ but it is not an element of $\cod(\M_{24})$, a contradiction.

\item[(q)] $|\cod(G/N)| = 20$. $G/N$ must be isomorphic to $\G_2(q)$. We may find from \cite{Tong-Viet} that $(q^6-1)(q^2-1)$ is a codegree of $\G_2(q)$. However, from a complete search, we may find that there does not exist an integer $q$ such that $(q^6-1)(q^2-1)$ is equal to an element of $\cod(\M_{24})$, a contradiction.

Having ruled out all other cases, we conclude $G/N \cong \M_{24}$.
\end{proof}

Now we give the proof of Theorem ~\ref{main}.

\begin{proof}
Suppose that the theorem is not true and let $G$ be a minimal counterexample.  Then $G$ is perfect by Remark \ref{perfectgroup}. Let $N$ be a maximal normal subgroup of $G$.
We have $G/N \cong H$ by Lemmas ~\ref{redS44}-\ref{redG23} and $N>1$. By the choice of $G$,  $N$ is a minimal normal subgroup of $G$. Otherwise there exists a nontrivial normal subgroup of $L$ of $G$ such that $L$ is included in $N$. Then  $\cod(G/L)=\cod(H)$ for  $\cod(H)=\cod(G/N)\subseteq \cod(G/L) \subseteq\cod(G)=\cod(H)$  and $G/L\cong H$ for $G$ is a minimal counterexample, a contradiction.\\

{\bf Step 1:} $N$ is the unique minimal normal subgroup of $G$.

Otherwise we assume $M$ is another minimal normal subgroup of $G$. Then $G=N\times M$ for $G/N$ is simple and $N\cong M\cong H$ for $M$ is also a maximal normal subgroup of $G$. If $H\cong \Sy_{4}(4)$, choose $\psi_1\in \irr(N)$ and $\psi_2\in \irr(M)$ such that $\cod(\psi_1)=\cod(\psi_2)=2^8\cdot 5^2$. Set $\chi=\psi_1\cdot\psi_2\in \irr(G)$. Then $\cod(\chi)=(2^8\cdot5^2)^2$, a contradiction. In other cases, we can also obtain a contradiction similarly.\\

{\bf Step 2:} $\chi$ is faithful for each $\chi \in \irr(G|N)$.

It can be easily checked that $N$ is not contained in the kernel of $\chi$ for each $\chi \in \irr(G|N)$. Then the kernel of $\chi$ is trivial by Step 1.\\

{\bf Step 3:} $N$ is elementary abelian.

Assume to the contrary that $N$ is not abelian. Thus $N=S^n$ where $S$ is a non-abelian simple group.
By Lemmas ~\ref{Mal2} and ~\ref{Liu2.5}, we see that there exists a non-principal character $\psi\in \irr(N)$ that extends to some $\chi\in \irr(G)$. Then $\ker(\chi)=1$ by Step 2 and $\cod(\chi)=|G|/\chi(1)=|N|/\psi(1)\cdot|G/N|$. This is a contradiction since $|G/N|$ is divisible by $\cod(\chi)$. \\

{\bf Step 4:} We show that $C_G(N)=N$.

First, ${\bf{C}}_G(N) \unlhd G$. Since $N$ is abelian by Step 3, there are two cases: either ${\bf{C}}_G(N)=G$ or ${\bf{C}}_G(N)=N$. If ${\bf{C}}_G(N)=N$, we are done.

Suppose ${\bf{C}}_G(N)=G$. Therefore $N$ must be in the center of $G$ and $|N|$ is a prime by Step 1. Since $G$ is perfect, we must have ${\bf Z}(G)=N$ and $N$ is isomorphic to a subgroup of the Schur multiplier of $G/N$ \cite[Corollary 11.20]{Isaacs}.

If $H$ is isomorphic to $\Sy_{4}(4)$, $\Sy_{4}(q)$, $q=2^f>4$ or ${}^2\F_4(2)'$, then by \cite{Conway} the Schur multiplier of $H$ is trivial which means $N=1$, a contradiction.

If $H$ is isomorphic to $\U_{4}(2)$, then $G$ is isomorphic to $2.\U_{4}(2)$ by  \cite{Conway}.
We note that $2.\U_{4}(2)$ has a character of degree
$16$ which means $3240 \in \cod(G)$, and this is a contradiction.

If $H$ is isomorphic to $\U_{4}(3)$, then $G$ is isomorphic to $2.\M_{22}$ or $3.\M_{22}$ by \cite{Conway}.
For $2.\M_{22}$ we have a character of degree $26,892$. This gives a codegree of $840 \in \cod(G)$, a contradiction. For $3.\M_{22}$ we have a character of degree $729$. So $2^3\cdot3^7\cdot5\cdot7 \in \cod(G)$, a contradiction.

If $H$ is isomorphic to $\J_{3}$, then $G$ is isomorphic to $3.\J_{3}$ by  \cite{Conway}.
 We note that $3.\J_{3}$ has a character of degree $26,163$ which means $1920 \in \cod(G)$, and this is a contradiction.

If $H$ is isomorphic to $\G_{2}(3)$, then $G$ is isomorphic to $3.\G_{2}(3)$ by \cite{Conway}.
We note that $3.\G_{2}(3)$ has a character of degree
$5832$ which means $2184 \in \cod(G)$, and this is a contradiction.

If $H$ is isomorphic to $\A_9$, then $G$ is isomorphic to $2.\A_9$ by \cite{Conway}. We note that $2.\A_9$ has a character degree of $8$ which means that $45360 \in \cod(G)$, contradiction.
If $H$ is isomorphic to $\J_2$, then $G$ is isomorphic to $2.\J_2$ by \cite{Conway}. We note that $2.\J_2$ has a character degree of $6$ which means that $201600 \in \cod(G)$, a contradiction.

If $H \cong \PSL(4,3)$, then $G$ is isomorphic to $2.\PSL(4,3)$ by \cite{Conway}. We note that $40$ is a character degree of $2.\PSL(4,3)$, so $303264 \in cod(G)$, a contradiction.

If $H \cong \McL$ then $G$ is isomorphic to $3.\McL$ by \cite{Conway}. We note that $126$ is a character degree of $3.\McL$, so $21384000 \in \cod(G)$, a contradiction.

If $H \cong \Sy_4(5)$, then $G$ is isomorphic to $2.\Sy_4(5)$ by \cite{Conway}. We note that $12$ is a character degree of $2.\Sy_4(5)$ so $780000 \in \cod(G)$, a contradiction.

If $H \cong \G_2(4)$, then $G$ is isomorphic to $2.\G_2(4)$ by \cite{Conway}. We note $2.\G_2(4)$ has a character degree of $12$ so $41932800 \in \cod(G)$, a contradiction.

If $H \cong \HS$, then $G$ is isomorphic to $2.\HS$ by \cite{Conway}. We note that $56$ is a character degree of $2.\HS$ which means that $1584000 \in \cod(G)$, a contradiction.

If $H \cong \ON$, then $G$ is isomorphic to $3.\ON$ by \cite{Conway}. We note that $342$ is a character degree of $3.\ON$ so $4042241280$ is in $\cod(G)$, a contradiction.

If $H \cong \M_{24}$, then the Schur multiplier of $H$ is trivial by \cite{Conway}, implying $N=1$, a contradiction.

If $H \cong \G_2(q)$, then the Schur multiplier of $H$ is trivial by \cite{Tong-Viet}, implying $N=1$, a contradiction.

Thus $C_G(N)=N$.\\

{\bf Step 5:} Let $\lambda$ be a non-principal character in $\irr(N)$ and $\theta \in \irr(I_G(\lambda)|\lambda)$. We show that
$\frac{|I_G(\lambda)|}{\theta(1)} \in \cod(G)$. Also, $\theta(1)$ divides $|I_G(\lambda)/N|$ and $|N|$ divides $|G/N|$. Especially, $I_G(\lambda)<G$, i.e. $\lambda$ is not $G$-invariant.

Let $\lambda$ be a non-principal character in $\irr(N)$. Given $\theta\in\irr(I_G(\lambda)|\lambda)$.
Note that $\chi=\theta^G\in\irr(G)$ and $\chi(1)=|G:I_G(\lambda)|\cdot\theta(1)$ by Clifford theory (see chapter 6 of \cite{Isaacs}). Then $\ker(\chi)=1$ by Step 2 and $\cod(\chi)=\frac{|I_G(\lambda)|}{\theta(1)}$. Especially, we have that $\theta(1)=\theta(1)/\lambda(1)$ divides $|I_G(\lambda)/N|$, and then $|N|$ divides
$\frac{|I_G(\lambda)|}{\theta(1)}$.
Since $\cod(G)=\cod(G/N)$ and $|G/N|$ is divisible by every element in $\cod(G/N)$, we have that $|N|\mid|G/N|$.

Next we show $I_G(\lambda)<G$. Otherwise we may assume $I_G(\lambda)=G$. Then $\mathrm{ker}(\lambda)\unlhd G$. Furthermore $\mathrm{ker}(\lambda)=1$ by Step 1 and $N$ is a cyclic subgroup with prime order by Step 3. Therefore $G/N$ is abelian for $G/N=N_G(N)/C_G(N)\leq\Aut(N)$ by  Normalizer-Centralizer Theorem, a contradiction.\\

{\bf Step 6:} Final contradiction.

By Step 3, $N$ is an elementary abelian $p$-subgroup for some prime $p$ and we assume $|N|=p^n$. By the Normalizer-Centralizer Theorem, $H\cong G/N=N_G(N)/C_G(N)\leq\Aut(N)$ Then we have that $n>1$. Note that in general $\Aut(N)=\GL(n,p)$. By Step 5, $|N|\mid |G/N|$ so we need only to look at the prime power divisors of $|H|$. \\

(Case 1)  $H\cong \Sy_4(4)$

Now $|H|=2^8\cdot3^2\cdot5^2\cdot17$. It follows easily that $|H|$ does not divide $|\GL(n,2)|$ when $2\leq n\leq 7$, $|\GL(2,3)|$, or $|\GL(2,5)|$.

 Thus $|N|=2^8$. By Step 5, we know that $\frac{|T|}{\theta(1)}\in\cod(G)$ and we show that $|N|$ divides $\frac{|T|}{\theta(1)}$.
Upon inspection of the codegrees of $G$, we have that $\frac{|T|}{\theta(1)}$ must equal to $2^8\cdot 3\cdot 5^2$, $2^8\cdot 3^2\cdot 5$, $2^8\cdot 5^2$, $2^8\cdot 3\cdot 5$, or $2^8\cdot 17$. Thus  $\frac{|T/N|}{\theta(1)}=3\cdot 5^2$, $3^2\cdot 5$, $5^2$, $3\cdot 5$, or $17$. From this, we conclude that $|T/N|_2=|\theta(1)|_2$. By chapter 6 of \cite{Isaacs}, we note that $|T/N|$ is a sum of squares which are of the same form as $\theta(1)^2$. Thus, we know that $|T/N|_2=1$. But then, $|G/N:T/N|=|G:T|\geq 2^8$, a contradiction.\\


(Case 2) $H\cong \U_4(2)$.
 Now $|H|=2^6\cdot3^4\cdot5$. If $|\GL(n,2)|$ when $2\leq n\leq5$ or $|\GL(m,3)|$ when $2\leq m\leq3$, it can be checked that $|H|$ doesn't divide the order $|\Aut(N)|$.

 If $|\GL(6,2)|$ and $|\GL(4,3)|$, we have that $|H|$ divides the order $|\Aut(N)|$.

The first circumstance is that $|N|=2^6$. 
By Step 5, we know that $\frac{|T|}{\theta(1)}\in\cod(G)$ and we show that $|N|$ divides $\frac{|T|}{\theta(1)}$.
Upon inspection of the codegrees of $G$, we have that $\frac{|T|}{\theta(1)}$ must equal to $2^6\cdot3^4$, $2^6\cdot3^3$, $2^6\cdot3^2$ or $2^6\cdot5$. Thus  $\frac{|T/N|}{\theta(1)}=3^4$, $3^3$, $3^2$ or $5$. From this, we conclude that $|T/N|_2=|\theta(1)|_2$. In any case, whenever we have the $p$-parts of $|T/N|$ and $\theta(1)$ equal to each other, we know that they must both be $1$. This is because $|T/N|$ is a sum of squares of the same form as $\theta(1)^2$ by chapter 6 of \cite{Isaacs}. Hence, we have $|T/N|_2=1$. But then, $|G/N:T/N|=|G:T|\geq 2^6$, a contradiction.

Another situation is that $|N|=3^4$. By Step 5, we know that $\frac{|T|}{\theta(1)}\in\cod(G)$ and we show that $|N|$ divides $\frac{|T|}{\theta(1)}$.
Upon inspection of the codegrees of $G$, we have that $\frac{|T|}{\theta(1)}$ must equal to $2^6\cdot3^4$, $2^5\cdot3^4$, $2^4\cdot3^4$, $2^3\cdot3^4$ or $3^4\cdot5$. Then $\frac{|T/N|}{\theta(1)}=2^6$, $2^5$, $2^4$, $2^3$ or $5$. From this, we conclude that $|T/N|_3=|\theta(1)|_3$. By chapter 6 of \cite{Isaacs}, we note that $|T/N|$ is a sum of squares which are of the same form as $\theta(1)^2$. Thus, we know that $|T/N|_3=1$. But then, $|G/N:T/N|=|G:T|\geq 3^4$, a contradiction.\\

(Case 3) $H\cong \Sy_{4}(q)$, for $q =2^f$, $q>4$.



Note that $\gcd(q^2+1,q+1) =1$ and $\gcd(q^2+1,q-1)=1$
and $\gcd(q+1, q-1)=1$ or $2$. Thus, $q^4$ is the largest power of a prime that divides the order of $\Sy_{4}(q)$, where $q=2^f>4$. Then, $|N|\le q^4$. Let $K$ be a maximal subgroup of $\Sy_4(q)$ such that $T/N\leq K$. Then is a maximal subgroup of $G/N$.
If $K$ is not the type in Lemma \ref{maxS4q}, then $|G:T|\ge q^4$, a contradiction.
Therefore $K$ is of type in Lemma \ref{maxS4q}.
If $K$ is of type ${C_{q-1}}^{2}:D_{8}$, then $|G:T|\ge\frac {1}{8}q^4(q^2+1)(q+1)^2$, and thus $|G:T|>q^4$, a contradiction.
If $K$ is of type ${C_{q+1}}^{2}:D_{8}$, then $|G:T|\ge\frac{1}{8}q^4(q^2+1)(q-1)^2$, and thus $|G:T|>q^4$, a contradiction.
If $K$ is of type ${C_{q^2+1}}:4$, then $|G:T|\ge\frac{1}{4}q^4(q+1)^2(q-1)^2$, and thus $|G:T|>q^4$, a contradiction.
If $K$ is of type $SO_{4}^{+}(q)$, then $|G:T|\ge2q^2(q^2+1)(q+1)(q-1)$, and thus $|G:T|>q^4$, a contradiction.
If $K$ is of type $S_{z}(q)$, then $|G:T|\ge q^2(q+1)^2(q-1)$, and thus $|G:T|>q^4$, a contradiction.
When the maximal subgroup is one of these cases, this means that it is a contradiction for $|G/N|>q^4$.\\

(Case 4) $H\cong \U_4(3)$.
 Now $|H|=2^7\cdot3^6\cdot5\cdot7$.
We note that $|\U_4(3)|$ does not divide $|\GL(n,2)|$ when $2\leq n\leq7$ or $|\GL(m,3)|$ when $2\leq m \leq 5$.

Thus $|N|=3^6$. By Step 5, we know that $\frac{|T|}{\theta(1)}\in\cod(G)$ and we show that $|N|$ divides $\frac{|T|}{\theta(1)}$.
Upon inspection of the codegrees of $G$, we have that $\frac{|T|}{\theta(1)}$ must equal to $2^7\cdot3^6$, $2^5\cdot3^6$, $2^4\cdot3^6$, $2^3\cdot3^6$, $3^6\cdot7$ or $3^6\cdot5$. Thus $\frac{|T/N|}{\theta(1)}=2^7$, $2^5$, $2^4$, $2^3$, $7$, or $5$ and we conclude that $|T/N|_3=|\theta(1)|_3$. By chapter 6 of \cite{Isaacs}, we note that $|T/N|$ is a sum of squares which are of the same form as $\theta(1)^2$. Thus, we know that $|T/N|_3=1$. But then, $|G/N:T/N|=|G:T|\geq 3^6$, a contradiction.\\

(Case 5) $H\cong{_{}^{2}\F_{2}(2)}'$.
Note that $|H|=2^{11}\cdot3^3\cdot5^2\cdot13$. We can also simply check that $|H|$ does not divide the order of any possible general linear group with these conditions.\\

(Case 6) $H\cong \J_{3}$.
Note that $|H|=2^7\cdot3^5\cdot5\cdot17\cdot19$. Similarly, we can check that $|H|$ does not divide $|\GL(n,2)|$ when $2\leq n\leq7$ or $|\GL(m,3)|$ when $2\leq n\leq5$.\\

(Case 7) $H\cong \G_{2}(3)$.
Now $|H|=2^6\cdot3^6\cdot7\cdot13$. We can also simply check that $|H|$ does not divide $|\GL(n,2)|$ when $2\leq n\leq6$ or $|\GL(m,3)|$ when $2\leq m\leq5$.

Thus $|N|=3^6$. By Step 5, we know that $\frac{|T|}{\theta(1)}\in\cod(G)$ and we show that $|N|$ divides $\frac{|T|}{\theta(1)}$.
Upon inspection of the codegrees of $G$, we have that $\frac{|T|}{\theta(1)}$ must equal to $2^5\cdot13\cdot3^6$, $13\cdot7\cdot3^6$, $2^6\cdot3^6$, $2^3\cdot7\cdot3^6$, $3^6\cdot2^5$, $3^6\cdot13$, $2^3\cdot3^6$, or $ 3^6\cdot7$. Thus  $\frac{|T/N|}{\theta(1)}=2^5\cdot13$, $13\cdot7$, $2^6$, $2^3\cdot7$, $2^5$, $2^3$, $ 13$ or $7$ and we conclude that $|T/N|_3=|\theta(1)|_3$. By chapter 6 of \cite{Isaacs}, we note that $|T/N|$ is a sum of squares which are of the same form as $\theta(1)^2$. Thus, we know that $|T/N|_3=1$. But then, $|G/N:T/N|=|G:T|\geq 3^6$, a contradiction.\\

(Case 8) $H \cong \A_9$.
Now $|H| = 2^6\cdot 3^4\cdot 5\cdot 7$. By inspection, $|H|$ only divides $|\GL(n,2)|$ when $n=6$.

By Step $5$, we know that $\frac{|T|}{\theta(1)} \in \cod(G)$ and that $|N| \mid \frac{|T|}{\theta(1)}$, so $\frac{|T|}{\theta(1)} = 2^6\cdot3\cdot5$, $2^6\cdot3^3$, $2^6\cdot3^4$, $2^6\cdot3\cdot5\cdot7$, or $2^6\cdot3^3\cdot5$. Then $\frac{|T/N|}{\theta(1)}=3\cdot5$, $3^3$, $3^4$, $3\cdot5\cdot7$, or $3^3\cdot5$. By chapter $6$ of \cite{Isaacs}, we note that $|T/N|$ is a sum of squares of numbers with the same form as $\theta(1)^2.$ This implies $|T/N|_2 = 1$. Thus $|G/N:T/N| = |G:T| \geq 2^6 > |G:T|$, a contradiction.\\

(Case 9) $H \cong \J_2$.
Now $|H| = 2^7\cdot 3^3\cdot 5^2\cdot 7$. By inspection, $|H|$ does not divide $|\GL(n,2)|$ with $2 \leq n \leq 7$, $|\GL(m,3)|$ with $2 \leq m \leq 3$, or $|\GL(2,5)|$. \\

(Case 10) $H \cong \PSL(4,3)$.
Now $|H| = 2^7\cdot 3^6\cdot 5\cdot 13$. By inspection, $|H|$ does not divide $|\GL(n,2)|$ with $2 \leq n \leq 7$ or $|\GL(m,3)|$ with $2 \leq m \leq 3$. Thus $|N|=3^4, 3^5$, or $3^6$.

Suppose $|N|=3^{4}$.
Let $K$ be a maximal subgroup of $\mathrm{PSL}(4,3)$ such that $T/N\leq K$. Then $K$ is a maximal subgroup of $G/N$.
If $K$ is not of the type $3^3:\mathrm{L}_3(3)$, then $|G:T|>3^4$ by Lemma \ref{psl43}, a contradiction.
By Step $5$, we can again deduce that $\frac{|T|}{\theta(1)}= 2^6\cdot3^6\cdot 5, 2^7\cdot 3^5\cdot 5, 2^5\cdot3^6\cdot 5, 2^7\cdot3^6, 2^6\cdot3^4\cdot 13, 2^6\cdot3^4\cdot5, 2^5\cdot3^6, 2^6\cdot3^5, 2^2\cdot3^6\cdot 5, 2^5\cdot3^4\cdot 5, 2^7\cdot3^4, 3^6\cdot 13, 2^5\cdot3^5$, or $2^3\cdot 3^6$. Thus, $\frac{|T/N|}{\theta(1)}  =2^6\cdot3^2\cdot 5, 2^7\cdot 3\cdot 5, 2^5\cdot3^2\cdot 5, 2^7\cdot3^2, 2^6\cdot 13, 2^6\cdot5, 2^5\cdot3^2, 2^6\cdot3, 2^2\cdot3^2\cdot 5, 2^5\cdot 5, 2^7, 3^2\cdot 13, 2^5\cdot3$, or $2^3\cdot 3^2$. We conclude that $\frac{|T/N|_3}{\theta(1)_3}\leq 3^2$, i.e. $\frac{|T/N|_3}{3^2}\leq \theta(1)_3$. By chapter $6$ of \cite{Isaacs}, $|T/N|$ is a sum of squares which are of the same form as $\theta(1)^2$, we know that $|T/N|_3\leq 3^4$. If $K\cong 3^3:\mathrm{L}_3(3)$, then $|T/N|\leq 3^4\cdot 2^4\cdot 13$, and so $|G/N:T/N| = |G:T| \geq 3^{2}\cdot2^3\cdot 5 >3^4 > |G:T|$, a contradiction. If $K\cong 3^3:\mathrm{L}_3(3)$, then $|T/N|\leq 3^4\cdot 2^4\cdot 13$, and so $|G/N:T/N| = |G:T| \geq 3^{2}\cdot2^3\cdot 5 >3^4 > |G:T|$, a contradiction.

Suppose $|N|=3^{5}$.
Let $K$ be a maximal subgroup of $\mathrm{PSL}(4,3)$ such that $T/N\leq K$. Then $K$ is a maximal subgroup of $G/N$.
If $K\cong (4\times\mathrm{A}_6):2$, $\mathrm{S}_6$, or $\mathrm{S}_4\times\mathrm{S}_4$, then $|G:T|>3^4$ by Lemma \ref{psl43}, a contradiction.
By Step $5$, we can again deduce that $\frac{|T|}{\theta(1)}= 2^6\cdot3^6\cdot 5$, $2^7\cdot 3^5\cdot 5$, $2^5\cdot3^6\cdot 5$, $2^7\cdot3^6$, $2^5\cdot3^6$, $2^6\cdot3^5$, $2^2\cdot3^6\cdot 5$, $3^6\cdot 13$, $2^5\cdot3^5$, or $2^3\cdot 3^6$. Thus, $\frac{|T/N|}{\theta(1)} =2^6\cdot3\cdot 5$, $2^7\cdot 5$, $2^5\cdot3\cdot 5$, $2^7\cdot3$, $2^5\cdot3$, $2^6$, $2^2\cdot3\cdot 5$, $3\cdot 13$, $2^5$, or $2^3\cdot 3$. We conclude that $\frac{|T/N|_3}{\theta(1)_3}\leq 3$, i.e. $\frac{|T/N|_3}{3}\leq \theta(1)_3$. By chapter $6$ of \cite{Isaacs}, $|T/N|$ is a sum of squares which are of the same form as $\theta(1)^2$, we know that $|T/N|_3\leq 3^2$. If $K\cong 3^3:\mathrm{L}_3(3)$, then $|T/N|\leq 3^2\cdot 2^4\cdot 13$, and so $|G/N:T/N| = |G:T| \geq 3^{4}\cdot2^3\cdot 5 >3^5 > |G:T|$, a contradiction. If $K\cong \mathrm{U}_4(2)$, then $|T/N|\leq 3^2\cdot 2^7\cdot 5$, and so $|G/N:T/N| = |G:T| \geq 3^{4}\cdot 13 >3^5 > |G:T|$, a contradiction. If $K\cong 3^4:2(\mathrm{A}_4\times \mathrm{A}_4).2$, then $|T/N|\leq 3^2\cdot 2^6$, and so $|G/N:T/N| = |G:T| \geq 3^{4}\cdot 2\cdot 5\cdot 13 >3^5 > |G:T|$, a contradiction.

Suppose $|N|=3^{6}$.
By Step $5$, we can again deduce that $\frac{|T|}{\theta(1)}= 2^6\cdot3^6\cdot 5, 2^5\cdot3^6\cdot 5, 2^7\cdot3^6,  2^5\cdot3^6, 2^2\cdot3^6\cdot 5, 3^6\cdot 13$, or $2^3\cdot 3^6$. Thus, $\frac{|T/N|}{\theta(1)}  = 2^6\cdot 5, 2^5\cdot 5, 2^7, 2^5, 2^2\cdot 5, 13$, or $2^3$. We conclude that $|T/N|_3=|\theta(1)|_3$. By chapter $6$ of \cite{Isaacs}, $|T/N|$ is a sum of squares which are of the same form as $\theta(1)^2$, we know that $|T/N|_3=1$ and $|G/N:T/N| = |G:T| \geq 3^{6} > |G:T|$, a contradiction.\\

(Case 11) $H \cong \McL$.
Now $|H| = 2^7\cdot 3^6\cdot 5^3\cdot 7\cdot 11$. By inspection, $|H|$ does not divide $|\GL(n,2)|$ with $2 \leq n \leq 7$, $|\GL(m,3)|$ with $2 \leq m \leq 6$, or $|\GL(l,5)|$ with $2 \leq l \leq 3$. \\

(Case 12) $H \cong \Sy_4(5)$.
Now $|H| = 2^6\cdot 3^2\cdot 5^4\cdot 13$. By inspection, $|H|$ does not divide $|\GL(n,2)|$ with $2 \leq n \leq 6$, $|\GL(2,3)|$, or $|\GL(l,5)|$ with $2 \leq l \leq 3$.

Thus $|N|=5^{4}$. By Step $5,$ we know that $\frac{|T|}{\theta(1)} \in \cod(G)$ and that $|N| \mid \frac{|T|}{\theta(1)}$, so $\frac{|T|}{\theta(1)} = 2^2\cdot3\cdot5^4, 5^4\cdot13, 2^3\cdot3\cdot5^4, 2^2\cdot3^2\cdot5^4, 2^4\cdot3\cdot5^4, 2^3\cdot3^2\cdot5^4, 2^5\cdot3\cdot5^4$, or $2^6\cdot3^2\cdot5^4.$ This implies $\frac{|T/N|}{\theta(1)} = 2^2\cdot3, 13, 2^3\cdot3, 2^2\cdot3^2, 2^4\cdot3, 2^3\cdot3^2, 2^5\cdot3$, or $2^6\cdot3^2$. We conclude that $|T/N|_5=|\theta(1)|_5$. By chapter $6$ of \cite{Isaacs}, $|T/N|$ is a sum of squares which are of the same form as $\theta(1)^2$, we know that $|T/N|_5=1$ and $|G/N:T/N| = |G:T| \geq 5^{4} > |G:T|$, a contradiction.\\

(Case 13) $H \cong \G_2(4)$.
Now $|H| = 2^{12}\cdot 3^3\cdot 5^2\cdot 7\cdot 13$. By inspection, $|H|$ does not divide $|\GL(n,2)|$ with $2 \leq n \leq 11$, $|\GL(m,3)|$ with $m=2,3$, or $|\GL(2,5)|$.

Thus $|N|=2^{12}$. By Step $5$, we know that $\frac{|T|}{\theta(1)} \in \cod(G)$ and that $|N| \mid \frac{|T|}{\theta(1)}$, so $\frac{|T|}{\theta(1)} = 2^{12}\cdot13$, $2^{12}\cdot3\cdot5$, $2^{12}\cdot3\cdot7$, $2^{12}\cdot3^2\cdot5$, $2^{12}\cdot3\cdot5^2$, or $2^{12}\cdot3^3\cdot5\cdot7$. This implies $|T/N| = 13$, $3\cdot5$, $3\cdot7$, $3^2\cdot5$, $3\cdot5^2$, or $3^3\cdot5\cdot7$. We conclude that $|T/N|_2=|\theta(1)|_2$. By chapter $6$ of \cite{Isaacs}, $|T/N|$ is a sum of squares which are of the same form as $\theta(1)^2$, we know that $|T/N|_2=1$ and $|G/N:T/N| = |G:T| \geq 2^{12} > |G:T|$, a contradiction.\\

(Case 14) $H \cong \HS$.
Now $|H| = 2^9\cdot 3^2\cdot 5^3\cdot 7\cdot 11$. By inspection, $|H|$ does not divide $|\GL(n,2)|$ with $2 \leq n \leq 9$, $|\GL(m,3)|$ with $m=2$, or $|\GL(l,5)|$ with $2 \leq l \leq 3$. \\

(Case 15) $H \cong \ON$.
Now $|H| = 2^9\cdot 3^4\cdot 5\cdot 7^3\cdot 11\cdot 19\cdot 31$. By inspection, $|H|$ does not divide $|\GL(n,2)|$ with $2 \leq n \leq 9$, $|\GL(m,3)|$ with $2 \leq m \leq 4$, or $|\GL(l,7)|$ with $2 \leq l \leq 3$. \\

(Case 16) $H \cong \M_{24}$.
Now $|H| = 2^{10} \cdot 3^{3} \cdot 5 \cdot 7 \cdot 11 \cdot 23$. By inspection, $|H|$ does not divide $|\GL(n,2)|$ with $2 \leq n \leq 10$, or $|\GL(m,3)|$ with $2 \leq m \leq 3$.
\end{proof}



\begin{thebibliography}{99}
    \bibitem{Ahan} N. Ahanjideh, Nondivisibility among irreducible character co-degrees, \textit{Bull. Aust. Math. Soc.} 105 (2022), 68-74.

    \bibitem{Aliz} F. Alizadeh, H. Behravesh, M. Ghaffarzadeh, M. Ghasemi, S. Hekmatarah, Groups with few codegrees of irreducible characters, \textit{Comm. Algebra} 47 (2019), 1147-1152.

     \bibitem{Aziz} K. Aziziheris, F. Shafiei, F. Shirjian, Simple groups with few irreducible character degrees, \textit{J. Algebra Appl.} 20 (2021), 2150139.

    \bibitem{BahriAkh} A. Bahri, Z. Akhlaghi, B. Khosravi, An analogue of Huppert's conjecture for character codegrees, \textit{Bull. Aust. Math. Soc.} 104 (2021), 278-286.

   \bibitem{Bianchi} M. Bianchi, D. Chillag,  M. L. Lewis,  E. Pacifici,  Character degree graphs that are complete graphs, \textit{Proc. Amer. Math. Soc}, 135 (2007), no. 3,  671-676.

    \bibitem{Chillag1} D. Chillag and M. Herzog, On character degrees quotients, \textit{Arch. Math.} 55 (1989), 25-29.

    \bibitem{Conway} {J.\,H. Conway, R.\,T. Curtis, S.\,P. Norton, R.\,A. Parker,  R.\,A. Wilson}, Atlas of Finite Groups, Oxford University Press, London, 1984.

   \bibitem{gkl} M. Gintz, M. Kortje, M. Laurence, Y. Liu, Z. Wang, Y. Yang, On the characterization of some non-abelian simple groups with few codegrees, \textit{Comm. Algebra} 50 (2022), 3932-3939.


   \bibitem{gzy} H. Guan, X. Zhang, Y. Yang, Recognizing ${}^2G_2(q)$ using the codegree set, \textit{Bull. Aust. Math. Soc.}, accepted.

    \bibitem{Isaacs} I.\,M. Isaacs, Character Theory of Finite Groups, Academic Press, New York, NY, 1976.


   \bibitem{JaKer} G. James, A. Kerber, The Representation Theory of the Symmetric Group, Addison-Wesley Publishing Company, 1981.


   \bibitem{ly}  Y. Liu, Y. Yang, Huppert's analogue conjecture for $\PSL(3,q)$ and $\PSU(3,q)$, preprint, arXiv:2209.02660.

    \bibitem{MalMor} G. Malle, A. Moret\'{o}, Nonsolvable groups with few character degrees, \textit{J. Algebra} 294 (2005), 117-126.

    \bibitem{Mor} A. Moret\'{o}, Complex group algebras of finite groups: Brauer's problem 1. \textit{Adv. Math.} 208 (2007), no. 1, 236-248.


    \bibitem{Qian} G. Qian, Y. Wang, H. Wei, Co-degrees of irreducible characters in finite groups, \textit{J. Algebra} 312 (2007), 946-955.


     \bibitem{JohnDer} J. Bray, D. Holt, C. Roney-Dougal, The Maximal Subgroups of the Low-Dimensional Finite Classical Groups, London Mathematical Society Lecture Note Series 407, Cambridge University Press, 2013.


    \bibitem{Suzuki} M. Suzuki, On a class of doubly transitive groups, \textit{Ann. Math.} 75 (1962), 105-145.

    \bibitem{Ward} H.\,N. Ward, On Ree's series of simple groups, \textit{Trans. Amer. Math. Soc.} 121 (1966), 62-89.
    \bibitem{Tong-Viet} Hung P. Tong-Viet et al. On Huppert's conjecture for $G_2(q), q \geq 7$, \textit{J. Pure Appl. Algebra} (2012).

\end{thebibliography}
\end{document}